\newtheorem{thm}{Theorem}[section]
\newtheorem{prop}[thm]{Proposition}
\newtheorem{lem}[thm]{Lemma}
\newtheorem{cor}[thm]{Corollary}
\newtheorem{fact}[thm]{Fact}
\newtheorem{quest}[thm]{Question}
\theoremstyle{definition}
\newtheorem{defn}[thm]{Definition}
\theoremstyle{remark}
\newenvironment{claim}[1]{\par\noindent\emph{Claim.}\space#1}{}
\newenvironment{claimproof}[1]{\par\noindent\emph{Proof of claim.}\space#1}{\hfill $\vartriangleleft$}
\DeclareMathOperator{\varr}{var}
\DeclareMathOperator{\parr}{par}
\newcommand{\Rb}{\mathbb{R}}
\newcommand{\Fc}{\mathcal{F}}
\newcommand{\Uc}{\mathcal{U}}
\newcommand{\Rc}{\mathcal{R}}
\newcommand{\Xb}{\bar{X}}
\newcommand{\Ob}{\mathbb{M}}
\newcommand{\Lc}{\mathcal{L}}
\newcommand{\Gc}{\mathcal{G}}
\newcommand{\Vc}{\mathcal{V}}
\newcommand{\Pb}{\mathbb{P}}
\newcommand{\Ic}{\mathcal{I}}
\newcommand{\Sc}{\mathcal{S}}
\makeatletter \DeclareRobustCommand{\cset}{\@ifstar\star@cset\normal@cset}
\newcommand{\star@cset}[1]{{\left\llbracket#1\right\rrbracket}}
\newcommand{\normal@cset}[2][]{{\mathopen{#1\llbracket}#2\mathclose{#1\rrbracket}}}
\newcommand{\tleq}{\trianglelefteq}
\newcommand{\tgeq}{\trianglerighteq}
\newcommand{\tgess}{\triangleright}
\DeclareMathOperator{\res}{\upharpoonright}
\def\Ind{\setbox0=\hbox{$x$}\kern\wd0\hbox to 0pt{\hss$\mid$\hss}
  \lower.9\ht0\hbox to 0pt{\hss$\smile$\hss}\kern\wd0}
\def\Notind{\setbox0=\hbox{$x$}\kern\wd0\hbox to 0pt{\mathchardef
    \nn=12854\hss$\nn$\kern1.4\wd0\hss}\hbox to
  0pt{\hss$\mid$\hss}\lower.9\ht0 \hbox to 0pt{\hss$\smile$\hss}\kern\wd0}
\def\ind{\mathop{\mathpalette\Ind{}}}
\newcommand{\indf}{\ind^{\!\!\textnormal{f}}}
\newcommand{\indK}{\ind^{\!\!\textnormal{K}}}
\newcommand{\indi}{\ind^{\!\!\textnormal{i}}}
\newcommand{\inda}{\ind^{\!\!\textnormal{a}}}
\newcommand{\xbar}{\bar{x}}
\newcommand{\ybar}{\bar{y}}
\newcommand{\abar}{\bar{a}}
\newcommand{\dbar}{\bar{d}}
\newcommand{\bbar}{\bar{b}}
\newcommand{\cbar}{\bar{c}}
\DeclareMathOperator{\tp}{tp}
\newcommand{\e}{\varepsilon}
\DeclareMathOperator{\Aut}{Aut}
\DeclareMathOperator{\Autf}{Autf}
\DeclareMathOperator{\Th}{Th}
\newcommand{\Fraisse}{Fra\"\i ss\'e}
\newcommand{\toot}{\leftrightarrow}
\newcommand{\To}{\Rightarrow}
\newcommand{\upsett}[1]{\cset{#1}}%
\DeclareMathOperator{\acl}{acl}
\DeclareMathOperator{\dcl}{dcl}
\begin{document}

\title{Bi-invariant types, reliably invariant types, and the comb tree property}
\address{Department of Mathematics \\
Iowa State University \\
396 Carver Hall \\
411 Morrill Road \\
Ames, IA 50011, USA}
\author{James E. Hanson}
\email{jameseh@iastate.edu}
\date{\today}


\keywords{invariant types, model-theoretic tree properties} 
\subjclass[2020]{03C45}

\begin{abstract}
  \begin{sloppypar}
    We introduce and examine some special classes of invariant types---bi-invariant, strongly bi-invariant, extendibly invariant, and reliably invariant types---and show that they are related to certain model-theoretic tree properties.
  \end{sloppypar}
  
  We show that the comb tree property (recently introduced by Mutchnik) is equivalent to the failure of Kim's lemma for bi-invariant types and is implied by the failure of Kim's lemma for reliably invariant types over invariance bases. We show that every type over an invariance base extends to a reliably invariant type---generalizing a result of Kruckman and Ramsey---and use this to show that, under a reasonable definition of Kim-dividing, Kim-forking coincides with Kim-dividing over invariance bases in theories without the comb tree property. Assuming a measurable cardinal, we characterize the comb tree property in terms of a form of dual local character.

  We also show that the antichain tree property (introduced by Ahn and Kim) seems to have a somewhat similar relationship to strong bi-invariance. In particular, we show that NATP theories satisfy Kim's lemma for strongly bi-invariant types and (assuming a measurable cardinal) satisfy a different form of dual local character. Furthermore, we examine a mutual generalization of the local character properties satisfied by NTP$_2$ and NSOP$_1$ theories and show that it is satisfied by all NATP theories. 

  Finally, we give some related minor results---a strengthened local character characterization of NSOP$_1$ and a characterization of coheirs in terms of invariant extensions in expansions---as well as a pathological example of Kim-dividing.
\end{abstract}

\maketitle






\section*{Introduction}

In neostability theory, it is often useful when a combinatorial tameness property is found to be characterized by some form of Kim's lemma, which states that dividing along some indiscernible sequence in some class $A$ of indiscernible sequences entails dividing along all indiscernible sequences in some other class $B$. Some standard examples are the following:
\begin{itemize}
\item ($T$ simple) If $\varphi(x,b)$ divides over $M$, then $\varphi(x,b)$ divides along any non-forking Morley sequence in $\tp(b/M)$.
\item ($T$ NTP$_2$) If $\varphi(x,b)$ divides over $M$, then $\varphi(x,b)$ divides along any strict Morley sequence over $M$.
\item ($T$ NSOP$_1$) If $\varphi(x,b)$ Kim-divides over $M$, then $\varphi(x,b)$ divides along any sequence generated by an $M$-invariant type extending $\tp(b/M)$ and along any tree Morley sequence in $\tp(b/M)$.
\end{itemize}

An important additional consideration is the existence of the relevant special indiscernible sequences in the second class, possibly with extra restrictions, such as being indiscernible relative to some set of parameters. The modern proof of the symmetry of non-forking in simple theories uses the following fact:
  ($T$ simple) If $a \indf_M b$, then there is a Morley sequence $(a_i)_{i<\omega}$ which is $Mb$-indiscernible with $a_0 = a$. 
Likewise, in the context of NSOP$_1$ theories, symmetry of Kim-independence was shown by Kaplan and Ramsey using a similar but significantly harder to prove fact:
($T$ NSOP$_1$) If $a \indK_M b$, then there is a tree Morley sequence $(a_i)_{i<\omega}$ that is $Mb$-indiscernible with $a_0 = a$ \cite{KaplanRamseyOnKim}. 
For NTP$_2$ theories, on the other hand, while symmetry is not expected, similar machinery was used by Chernikov and Kaplan to show that forking and dividing coincide over extension bases. In particular, as part of this argument, they showed that in NTP$_2$ theories, any type over an invariance base extends to a \emph{strictly invariant type} \cite{Chernikov-Kaplan-NTP2}. Kruckman and Ramsey observed in  \cite{NKL} that Chernikov and Kaplan's proof almost doesn't rely on the assumption of NTP$_2$ and can be adapted to show that any type over a model extends to a \emph{Kim-strictly invariant type}.

\begin{defn}
  An $M$-invariant type $p(x)$ is \emph{strictly invariant} if whenever $a \models p \res M b$, $b \indf_M a$. $p(x)$ is \emph{Kim-strictly invariant} if whenever $a \models p \res M b$, $b \indK_M a$.
\end{defn}

\begin{fact}[{Kruckman, Ramsey \cite[Thm. 2.26]{NKL}}]\label{fact:Kruckman-Ramsey}
  ($T$ arbitrary) Any type over a model $M$ has a Kim-strictly invariant extension.
\end{fact}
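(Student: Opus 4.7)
The plan is to adapt Chernikov--Kaplan's construction of strictly invariant types in NTP$_2$ theories, replacing $\indf$ by $\indK$ throughout. The only property of non-forking used in their argument (in its Kim-form) is that $M$-invariant global types do not Kim-fork over $M$, which holds in arbitrary $T$; so the strategy transfers, as long as we work over a model rather than a mere invariance base.

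First I would establish a local existence lemma: for any tuple $B$ in the monster $\mathfrak{C}$, there is $a \models p$ with $B \indK_M a$. To see this, let $r(y)$ be any global $M$-invariant extension of $\tp(B/M)$ (for instance a coheir), pick any $a_0 \models p$, and let $B^\ast \models r \res Ma_0$. Since $r$ is $M$-invariant it does not Kim-fork over $M$, whence $B^\ast \indK_M a_0$. Choosing $\sigma \in \Aut(\mathfrak{C}/M)$ with $\sigma(B^\ast) = B$ and setting $a := \sigma(a_0)$ yields $a \models p$ (as $p \in S(M)$) with $B \indK_M a$.

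Next I would assemble these local witnesses into a global Kim-strict realization. Let $\Psi_p$ denote the set of formulas $\psi(y,z)$ over $M$ such that $\psi(y,a)$ Kim-divides over $M$ whenever $a \models p$; this set is well defined because Kim-dividing of $\psi(y,a)$ depends only on $\tp(a/M)$. Consider the partial type
\[
  \Sigma(x) \,:=\, p(x) \,\cup\, \{\,\neg\psi(c,x) \,:\, \psi \in \Psi_p,\ c \in \mathfrak{C}\,\}
\]
over $\mathfrak{C}$. Any finite subset involves finitely many parameters $c_0,\dots,c_{n-1}$; the local existence lemma applied to $B := c_0\cdots c_{n-1}$ produces $a \models p$ with $B \indK_M a$, so no $\psi_i(y,a)$ can lie in $\tp(c_i/Ma)$ and hence $\neg \psi_i(c_i,a)$ holds. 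Compactness then gives a realization $a^\ast$ of $\Sigma$ in a sufficiently saturated extension; the global type $\tp(a^\ast/\mathfrak{C})$ extends $p$ and is Kim-strict, but is not yet necessarily $M$-invariant. To fix this, I would realize $\Sigma$ many times along a long sequence $(a^\ast_i)_{i<\kappa}$, extract an $M$-indiscernible subsequence $(a^\ast_{i_\alpha})_\alpha$ via Erd\H{o}s--Rado, and take its limit / EM-type $q \in S(\mathfrak{C})$. Then $q$ extends $p$, is $M$-invariant by the standard limit-type argument, and remains Kim-strict because each element of the subsequence still satisfies $c \indK_M a^\ast_{i_\alpha}$ for every $c \in \mathfrak{C}$, so the same holds for any $a \models q\res Mc$ by automorphism invariance.

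The main obstacle is the setup of Step 2: packaging Kim-strictness over all of $\mathfrak{C}$ as a bona fide partial type whose finite consistency follows from Step 1. The key observation is that $\Psi_p$ is an intrinsic family of formulas, depending only on $p$ and not on the choice of realization, so the local existence lemma provides the required witnesses uniformly and the compactness argument goes through cleanly. The Erd\H{o}s--Rado extraction upgrading Kim-strictness to simultaneously Kim-strict and $M$-invariant is then a routine application of standard indiscernible-sequence machinery.
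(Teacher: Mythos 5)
Your Steps 1 and 2 are correct: if $r$ is a global $M$-invariant extension of $\tp(B/M)$ and $B^\ast \models r \res Ma_0$, then $\tp(B^\ast/Ma_0)$ is contained in an $M$-invariant global type and hence cannot Kim-fork over $M$ (any Kim-dividing $\psi(y,c)$ in such a type would contradict $M$-invariance along the witnessing Morley sequence), so $B^\ast \indK_M a_0$ and the transport gives the local lemma; and the compactness argument for $\Sigma(x)$ then goes through because $B \indK_M a$ passes to sub-tuples. The gap is in Step 3. You assert that the limit (or EM/ultrafilter-limit) type of an $M$-indiscernible sequence is "$M$-invariant by the standard limit-type argument," but there is no such standard argument, and the claim is false: if $(a_i)_{i<\lambda}$ is $M$-indiscernible and $q = \lim_{\mathcal U}\tp(a_i/\mathfrak C)$, then for $\sigma\in\Aut(\mathfrak C/M)$ the membership of $\varphi(x,\sigma c)$ in $q$ is governed by the behavior of the (different) sequence $(\sigma^{-1}a_i)$, and there is no reason the ultrafilter treats the two the same. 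Already in DLO, an $M$-indiscernible increasing sequence sitting above $M$ determines a cut in the monster that is not $\Aut(\mathfrak C/M)$-invariant, and its limit type concentrates on that cut. What you have correctly established is that $\Sigma(x)$ is a consistent, $\Aut(\mathfrak C/M)$-invariant \emph{partial} type; but an invariant partial type need not have an invariant \emph{complete} extension, and producing one is exactly the content of the statement.

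The paper does not attempt the indiscernible-extraction route. It proves the stronger statement (existence of reliably invariant types, \cref{cor:reliability-is-reliable}) via \cref{thm:I-reliable-existence}, whose mechanism is essentially orthogonal to yours: it works with variables $\Xb = (x_a)_{a\in\Ob}$ indexed by the whole monster, forms the partial type $\Sigma(\Xb)$ cutting out the closed set of global types in the target class $\Ic$ extending the minimal monster type $\Theta_A(\Xb)$, extends to a \emph{maximal $A$-co-invariant} partial type $\Xi(\Xb)$ by Zorn, and then uses \cref{lem:parallel-transport} and \cref{lem:max-does-not-fracture} to show that types consistent with $\Xi$ have the required closure properties. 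The maximal co-invariant device is precisely what replaces the step you tried to carry out with indiscernibles: it is the mechanism that turns a collection of local existence statements into a single $M$-invariant (indeed, much better) global type. If you want to follow the Chernikov--Kaplan template more literally, the point you would need to supply is a way to select, formula-by-formula or via a transfinite construction, a complete $M$-invariant extension of $\Sigma(x)$; simply realizing $\Sigma(x)$ and postprocessing with Erd\H os--Rado does not achieve this.
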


In \cref{cor:reliability-is-reliable}, we generalize \cref{fact:Kruckman-Ramsey} by showing that any type over an invariance base extends to a Kim-strictly invariant type.

\cref{fact:Kruckman-Ramsey} prompted Kruckman and Ramsey to investigate the following variant of Kim's lemma as possibly characterizing a good mutual generalization of NTP$_2$ and NSOP$_1$: A theory $T$ satisfies \emph{new Kim's lemma} if whenever $\varphi(x,b)$ Kim-divides over a model $M$, then it Kim-divides with regards to any Kim-strictly invariant type extending $\tp(b/M)$. In \cite{NKL}, Kruckman and Ramsey gave some natural examples of theories with both TP$_2$ and SOP$_1$ which nevertheless satisfy new Kim's lemma, specifically the generic theory of parameterized dense linear orders and the two-sorted theory of an infinite-dimensional vector space over a real-cloesd field with a generic (alternating or symmetric) bilinear form. As shown in \cite{NKL}, a failure of new Kim's lemma entails the existence of a certain combinatorial configuration mutually generalizing TP$_2$ and SOP$_1$, which they call the \emph{bizarre tree property} or \emph{BTP}, but whether the converse holds is unclear at the moment. 

The only other previously known general construction of (Kim-)strictly invariant types seems to have been the following fact: 

\begin{defn}
  A global type $p(x)$ is an \emph{$M$-coheir} or a \emph{coheir over $M$} if it is finitely satisfiable in $M$. $p(x)$ is an \emph{$M$-heir} or an \emph{heir over $M$} if for every $M$\nobreakdash-\hspace{0pt}formula $\varphi(x,y)$, if there is a $b$ in the monster such that $\varphi(x,b) \in p(x)$, then there is a $c \in M$ such that $\varphi(x,c) \in M$.
\end{defn}

\begin{fact}\label{fact:heir-coheir-basic}
  If $p(x)$ is $M$-invariant and $N \succeq M$ is $(|M|+|\Lc|)^+$-saturated, then $p^{\otimes n}$ is an $N$-heir for every $n<\omega$.
\end{fact}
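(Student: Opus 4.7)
The plan is to decompose the claim into two independent pieces: invariance of $p^{\otimes n}$, and a general saturation-based lemma showing that any sufficiently invariant global type becomes an heir over a saturated extension. Neither piece contains any real difficulty, so I expect the whole proof to go through by an unfolding-of-definitions argument.

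First I would note (as a standard fact about products of invariant types) that if $p$ is $M$-invariant then $p^{\otimes n}$ is an $M$-invariant global type in the tuple of variables $(x_1, \dots, x_n)$. This reduces the statement to the following general claim: \emph{any $M$-invariant global type $q$ is an $N$-heir whenever $N \succeq M$ is $(|M|+|\Lc|)^+$-saturated.}

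To prove this general claim, suppose $\varphi(x,y)$ is an $\Lc(N)$-formula and $b$ is some tuple in the monster with $\varphi(x,b) \in q$. Factor $\varphi(x,y) = \psi(x,y,e)$ for some $\Lc$-formula $\psi$ and parameter tuple $e \in N$. By $M$-invariance of $q$, membership of $\psi(x,c,e)$ in $q$ depends only on $\tp(c/Me)$. Since $|Me| \le |M| + |\Lc|$, the saturation hypothesis on $N$ produces a tuple $c \in N$ with $\tp(c/Me) = \tp(b/Me)$, and then $\psi(x,c,e) \in q$, i.e.\ $\varphi(x,c) \in q$, as required. Applying this to $q = p^{\otimes n}$ finishes the proof.

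The main ``obstacle'' is really only a bookkeeping one: one must separate the parameters of $\varphi$ into the part $b$ that lives in the monster and the part $e$ that already lives in $N$, and then check that $Me$ is still small enough to invoke the saturation of $N$. Since $e$ is a finite tuple from $N$, this is automatic, and no heavier machinery (such as anything about $p^{\otimes n}$ beyond invariance) is needed.
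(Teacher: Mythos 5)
Your proof is correct. The paper records this as a \emph{Fact} with no proof attached, presumably treating it as folklore, so there is no in-paper argument to compare against; but the argument you give is precisely the standard one. The only two ingredients needed are (i) $M$-invariance of $p^{\otimes n}$, which is a routine property of Morley products, and (ii) the transport step: write an $\Lc(N)$-formula as $\psi(x,y,e)$ with $e \in N$ finite, observe that by $M$-invariance the truth of $\psi(x,b,e) \in p^{\otimes n}$ depends only on $\tp(b/Me)$, and then use $(|M|+|\Lc|)^+$-saturation of $N$ (together with $|Me| \leq |M|+|\Lc|$) to realize that type inside $N$. Your handling of the parameter bookkeeping is accurate, and no further structure of $p^{\otimes n}$ beyond its invariance is needed.
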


In particular, if $p(x)$ is an $M$-heir and $a \models p \res M b$, then $\tp(b/Ma)$ extends to an $M$-coheir, implying that any type that is both invariant and an heir is strictly invariant. But the property in \cref{fact:heir-coheir-basic} is ostensibly stronger than mere strict invariance. 

\begin{defn}
  A global type $p(x)$ is an \emph{$M$-heir-coheir} if $p(x)$ is an $M$-heir and an $M$-coheir. $p(x)$ is \emph{$M$-bi-invariant} if it is $M$-invariant and whenever $a \models p \res M b$, then $\tp(b/Ma)$ extends to a global $M$-invariant type. $p(x)$ is \emph{strongly $M$-bi-invariant} if $p^{\otimes n}$ is $M$-bi-invariant for every $n<\omega$.
\end{defn}

Note that in an NIP theory, any strictly invariant type is bi-invariant.

One of the contributions of this paper will be to present a couple of novel methods for constructing heir-coheirs, and therefore bi-invariant types. Unlike \cref{fact:heir-coheir-basic}, the heir-coheirs we construct will not be strongly bi-invariant. In fact, there seems to be a significant difference between the tasks of constructing bi-invariant and strongly bi-invariant types. In particular, bi-invariance is something that can be accomplished generically on the level of formulas. This can be seen in the following proposition (which we will not use elsewhere in this paper but is motivating and may be of independent interest).

\begin{prop}\label{prop:easy-construction}
  Let $T$ be a countable theory. Let $M\models T$ be a countable model with the following weak saturation property:
  \begin{itemize}
  \item[$ $] For any $M$-formula $\varphi(x,y)$, if there is an element $b$ in the monster such that $\varphi(M,b)$ is infinite, then there is a $c \in M$ such that $\varphi(M,c)$ is infinite.
  \end{itemize}
  Let $S_1^{\mathrm{nr}}(M)$ be the set of non-realized $1$-types over $M$. There is a dense $G_\delta$ set $X \subseteq S_1^{\mathrm{nr}}(M)$ such that for any $p \in X$, any $M$-coheir extending $p$ is an $M$-heir.
\end{prop}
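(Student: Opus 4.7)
The plan is, for each $M$-formula $\varphi(x,y)$, to identify a closed subset $\overline{B_\varphi}$ of $S_1^{\mathrm{nr}}(M)$ which (i) contains every non-realized type admitting a coheir extension that fails the heir property via $\varphi$, and (ii) is nowhere dense. Since $\Lc$ and $M$ are countable, there are only countably many such $\varphi$, and the compact zero-dimensional Polish space $S_1^{\mathrm{nr}}(M)$ is Baire, so $X := \bigcap_\varphi (S_1^{\mathrm{nr}}(M) \setminus \overline{B_\varphi})$ will be the desired dense $G_\delta$.

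To set up the bad sets, I would let $B_\varphi \subseteq S_1^{\mathrm{nr}}(M)$ be the set of $p$ for which there exists $b$ in the monster such that the partial type $p \cup \{\varphi(x,b)\} \cup \{\neg\varphi(x,c) : c \in M\}$ is finitely satisfied in $M$. If a coheir $q \supseteq p$ fails to be an heir via $\varphi$, then the witnessing parameter $b$ together with the coheir condition immediately places $p$ in $B_\varphi$; conversely, any such $b$ can be extended (Zorn plus finite satisfiability) to a coheir $q \supseteq p$ of this form. Note that any $p \in B_\varphi$ automatically contains $\neg\varphi(x,c)$ for every $c \in M$, since $p$ is a complete type and the partial type above is consistent.

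The core of the argument is showing that $B_\varphi$ is nowhere dense. Fix a basic open set $[\psi] \cap S_1^{\mathrm{nr}}(M) \neq \emptyset$; by an elementarity argument, this forces $\psi(M)$ to be infinite. I would then split on whether $\psi(M) \cap \varphi(M,b)$ is infinite for some $b$ in the monster. In the infinite case, apply the weak saturation hypothesis to the $M$-formula $\psi(x) \wedge \varphi(x,y)$ to find $c \in M$ with $\psi(M) \cap \varphi(M,c)$ infinite; then $[\psi \wedge \varphi(x,c)] \cap S_1^{\mathrm{nr}}(M)$ is a nonempty relatively open subset of $[\psi]$ disjoint from $B_\varphi$ because $B_\varphi$ forces $\neg\varphi(x,c) \in p$. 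In the finite case, any hypothetical $p \in [\psi] \cap B_\varphi$ with witness $b$ would produce a family $\{\delta(M) \cap \psi(M) \cap \varphi(M,b) : \delta \in p\}$ of nonempty subsets of a fixed finite set, closed under finite intersection, and hence with a common element $a \in M$ that realizes $p$, contradicting non-realization. Either way $[\psi] \cap S_1^{\mathrm{nr}}(M)$ contains a nonempty relatively open subset missing $B_\varphi$, which is nowhere density.

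The only place the hypotheses on $M$ genuinely enter is the weak saturation step, used to convert the external witness $b$ into an $M$-element $c$; everything else is Stone-space bookkeeping and elementary finite combinatorics. I would expect this to be the step that actually requires the hypothesis, so the proposition is presumably sharp in this respect.
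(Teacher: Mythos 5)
Your proposal is correct and takes essentially the same route as the paper: both are Baire-category arguments over the countable language, both use the weak saturation hypothesis exactly to replace the external witness $b$ by an internal $c\in M$, and both hinge on the same case split on whether $\psi(M)\wedge\varphi(M,b)$ can be infinite (with the finite case handled by the observation that a non-realized type over $M$ cannot be supported on a finite definable set). The only difference is presentational: you define the bad set $B_\varphi$ and prove it nowhere dense, whereas the paper builds the complementary dense open set $U_\psi=\bigcup_\varphi U_{\varphi,\psi}$ directly.
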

\begin{proof}
  For any $M$-formulas $\varphi(x)$ and $\psi(x,y)$, if there is a $b$ in the monster such that $\varphi(M)\wedge \psi(M,b)$ is infinite, find a $c \in M$ such that $\varphi(M) \wedge \psi(M,c)$ is infinite and let $U_{\varphi,\psi}$ be the set of types in $S^{\mathrm{nr}}_1(M)$ containing $\varphi(x) \wedge \psi(x,c)$. (Note that this set is non-empty since $\varphi(M) \wedge \psi(M,c)$ is infinite.) If there is no such $c$, let $U_{\varphi,\psi}$ be the set of types in $S^{\mathrm{nr}}_1(M)$ containing $\varphi(x)$. Let $U_{\psi}= \bigcup_{\varphi \in \Lc(M)}U_{\varphi,\psi}$. $U_{\psi}$ is a dense open subset of $S^{\mathrm{nr}}_1(M)$ for each $\psi(x,y)$. Let $X = \bigcap_{\psi \in \Lc(M)}U_\psi$. $X$ is a dense $G_\delta$ set.

  Now fix $p(x) \in X$. We need to argue that any $M$-coheir $q(x) \supset p(x)$ is an $M$\nobreakdash-\hspace{0pt}heir. Suppose that $\psi(x,b) \in q(x)$. Since $q(x)$ is an $M$-coheir that is not realized in $M$, $\varphi(M) \wedge \psi(M,b)$ must be infinite for every $M$-formula $\varphi(x) \in p(x)$. Since $p(x) \in U_\psi$, we have that for some $c \in M$, $\psi(x,c) \in p(x)$. Since we can do this for any $M$-formula $\psi(x,y)$, we have that $q(x)$ is an $M$-heir.
\end{proof}

Note that the saturation property in \cref{prop:easy-construction} is satisfied by any computably saturated model and by any model of a theory that eliminates $\exists^\infty$. It's also relatively easy to see that \cref{prop:easy-construction} can fail in models at least as large as the covering number of the ideal of meager sets. Specifically, no coheir of a type over $(\Rb,<)$ is also an heir.

In \cite{Mutchnik-NSOP2}, Mutchnik introduced a certain combinatorial configuration he called $\omega$\nobreakdash-\hspace{0pt}DCTP$_2$ as part of his proof that NSOP$_1$ and NSOP$_2$ (or equivalently NTP$_1$) are the same. For the sake of simplicity, we will just refer to this condition as the \emph{comb tree property} or \emph{CTP} (\cref{defn:CTP}). CTP will be a major focus of this paper. 

In \cite{ATP-1}, Ahn and Kim introduced the \emph{antichain tree property} or \emph{ATP}, which, like BTP, is intended to be a good mutual generalization of TP$_2$ and SOP$_1$. In \cite{Ahn2022}, they show that ATP is always witnessed by a formula with a single free variable and is always witnessed by the $2$-inconsistent version of the condition. Furthermore, in \cite{Some-Remarks-Kim-dividing-NATP}, Kim and Lee show that in NATP theories, Kim-forking and Kim-dividing coincide over models and that if new Kim's lemma holds (in the sense of Kruckman and Ramsey), then coheirs are universal witnesses of Kim-dividing if and only if they are Kim-strict.

It is not hard to show\footnote{The fact that ATP implies CTP is immediate from the definition, as discussed in \cite{Ahn2022}, and the fact that CTP implies BTP is a corollary of unpublished results of Kruckman and Ramsey, but also follows from out \cref{cor:NKL-to-NATP}.


} that these three conditions are related. In particular (and in an alphabetically frustrating way), ATP implies CTP, which implies BTP.

The main contribution of this paper will be a characterization of NCTP in terms of a variant of new Kim's lemma (where Kim-strict invariance is replaced with bi-invariance). We will also give an argument that NATP implies the analogous property with strong bi-invariance, although the converse is unclear.

In \cref{sec:reliably}, we will argue that a certain definition of Kim-dividing is natural over invariance bases and extend a result of Mutchnik's \cite[Thm.~4.9]{Mutchnik-NSOP2} and a result of Kim and Lee's \cite[Rem.~5.10]{Some-Remarks-Kim-dividing-NATP} by showing that under this definition in NCTP theories, Kim-forking coincides with Kim-dividing over invariance bases. We do this by introducing the notions of \emph{reliably invariant types} and \emph{reliable coheirs} and show that these always exist over invariance bases and models, respectively, and that these always witness Kim-dividing in CTP theories. This partially remedies a deficiency of our characterization which is that not all types over models extend to heir-coheirs or even strictly invariant types (see \cite[Sec.~5.1]{Chernikov-Kaplan-NTP2}). A remaining issue with this is that it is unclear whether NCTP is actually characterized by Kim's lemma for reliable coheirs.

In \cref{sec:local-char}, we give a dual local character characterization of NCTP assuming the existence of a measurable cardinal. We show that NATP theories satisfy a similar form of dual local character, but again the converse is unclear. In \cref{sec:natp-implies-generic}, we also discuss a notion of local character mutually generalizing the local character properties that characterize NTP$_2$ and NSOP$_1$ theories and show that it is implied by NATP.

We summarize the known implications between mutual generalizations of NTP$_2$ and NSOP$_1$ in Figure~\ref{fig:knowledge}.

\begin{figure}
  \centering
  \adjustbox{scale=0.95}{
      \begin{tikzcd}
	& {\text{NBTP}} \\
	& {\text{New Kim's Lemma}} \\
	\begin{array}{c} \text{Bi-invariant} \\ \text{Kim's Lemma} \end{array} & {\text{NCTP}} & \begin{array}{c} \text{Bi-invariant} \\ \text{Dual Local Character} \end{array} \\
	\begin{array}{c} \text{Reliably Invariant} \\ \text{Kim's Lemma} \end{array} & {\text{NATP}} &  \\
	 & \begin{array}{c} \text{Strongly Bi-invariant} \\ \text{Kim's Lemma} \end{array} &  \\
    \begin{array}{c} \text{Generic Stationary} \\ \text{Local Character} \end{array} &  & \begin{array}{c} \text{Strongly Bi-invariant} \\ \text{Dual Local Character} \end{array}
	\arrow["{\text{\cite[Thm.~5.2]{NKL}}}", from=1-2, to=2-2]
	\arrow["{\text{Cor.~\ref{cor:NKL-to-NATP}}}"', from=2-2, to=3-2]
	\arrow["{\text{Thm.~\ref{thm:CTP-char}}}"', tail reversed, from=3-2, to=3-1]
	\arrow["{\text{Prop.~\ref{prop:dual-char}}}", shift left=3, dashed, from=3-2, to=3-3]
	\arrow["{\text{Prop.~\ref{prop:reliable-main}}}", from=3-2, to=4-1]
	\arrow[from=3-2, to=4-2]
	\arrow["{\text{Prop.~\ref{prop:char-uncountable-languages}}}", shift left=3, from=3-3, to=3-2]
	\arrow["{\text{Prop.~\ref{prop:Kim-failure-implies-CTP}}}", from=4-2, to=5-2]
	\arrow["{\text{Prop.~\ref{prop:gen-stat-ATP}}}"', from=5-2, to=6-1]
	\arrow["{\text{Prop.~\ref{prop:dual-char}}}", dashed, from=5-2, to=6-3]
\end{tikzcd}
}  
  \caption{Known implications between mutual generalizations of NTP$_2$ and NSOP$_1$. (The dashed lines are proven assuming the existence of a measurable cardinal.)}
  \label{fig:knowledge}
\end{figure}

Finally, we would like to thank Alex Kruckman and Nicholas Ramsey for many valuable discussions regarding the ideas in this paper.


\section{The comb tree property}

Given $\sigma \in 2^{<\omega}$, we write $\upsett{\sigma}$ for the set of $\tau \in 2^{<\omega}$ such that $\tau \tgeq \sigma$.

\begin{defn}\label{defn:CTP}
  Given an ordinal $\alpha$, a set $X \subseteq 2^{<\alpha}$ is a \emph{right-comb} 
  if it is an antichain and satisfies that for any $\sigma \in 2^{<\alpha}$, if there a $\tau \in X$ with $\tau \tgeq \sigma \frown 1$, then there is at most one $\tau \in X$ with $\tau \tgeq \sigma \frown 0$.

  A theory $T$ has \emph{$k$-CTP} if there is a binary tree $\{b_{\sigma}\}_{\sigma \in 2^{<\omega}}$ and a formula $\varphi(x,y)$ such that for any path $\alpha \in 2^\omega$,  $\{\varphi(x,b_{\alpha \res n}) : n < \omega\}$ is $k$-inconsistent but for any right-comb $X \subseteq 2^{<\omega}$, $\{\varphi(x,b_{\sigma}) : \sigma \in X\}$ is consistent.

  $T$ has \emph{CTP} if it has $k$-CTP for some $k<\omega$.
\end{defn}

Note that since any right-comb is an antichain, any theory with ATP has CTP, as observed in \cite[Rem.~5.7]{Some-Remarks-Kim-dividing-NATP}. Another thing to note is that the dual condition of CTP (where paths are consistent and right-combs are $k$-inconsistent), was shown to be equivalent to NSOP$_1$ in \cite{Mutchnik-NSOP2}. This means that many of our results dualize to give analogous results for NSOP$_1$ theories. We collect these in \cref{sec:Dual-NSOP1}.

\subsection{Failure of Kim's lemma for heir-coheirs from the comb tree property}

\begin{defn}
  A subset $X \subseteq 2^{<\omega}$ is \emph{dense above $\sigma$} if for every $\tau \tgeq \sigma$, $X\cap \upsett{\tau}$ is non-empty. $X$ is \emph{somewhere dense} if there is a $\sigma$ such that $X$ is dense above $\sigma$. A filter $\Fc$ on $2^{<\omega}$ is \emph{everywhere somewhere dense} if every $X \in \Fc$ is somewhere dense.
\end{defn}

In this paper, we will only ever use the term `somewhere dense' to refer to the above property of a subset of a tree. We will never use it in the topological sense. We will also only use the term `filter' to refer to proper filters (i.e., filters that do not contain $\varnothing$).

The following is a fairly standard idea in forcing.

\begin{lem}\label{lem:dense-pigeons}
  For any somewhere dense $X \subseteq 2^{<\omega}$ and any $Y \subseteq 2^{<\omega}$, either $X \cap Y$ is somewhere dense or $X \setminus Y$ is somewhere dense.
\end{lem}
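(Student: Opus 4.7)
The plan is to prove the contrapositive-style dichotomy by contradiction, or rather directly: fix $\sigma$ witnessing that $X$ is dense above $\sigma$, and show that if $X \cap Y$ fails to be somewhere dense, then in particular it fails to be dense above $\sigma$, and this failure produces a witness $\tau$ above which $X \setminus Y$ becomes dense. This is the forcing-flavored argument one expects: in the absence of density of one piece of a partition, the ``bad'' node automatically becomes a witness of density for the other piece.

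Concretely, I would pick $\sigma \in 2^{<\omega}$ such that $X \cap \upsett{\tau}$ is non-empty for every $\tau \tgeq \sigma$. Assume $X \cap Y$ is not somewhere dense; in particular it is not dense above $\sigma$, so there exists $\tau \tgeq \sigma$ with $X \cap Y \cap \upsett{\tau} = \varnothing$. I claim $X \setminus Y$ is dense above $\tau$. Indeed, for any $\rho \tgeq \tau$ we have $\rho \tgeq \sigma$, so $X \cap \upsett{\rho}$ is non-empty; but $X \cap \upsett{\rho} \subseteq X \cap \upsett{\tau}$, which is disjoint from $Y$ by the choice of $\tau$, so $X \cap \upsett{\rho} = (X \setminus Y) \cap \upsett{\rho}$ and is non-empty.

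The only ``step'' here is unwinding the definitions and remembering that upsets shrink as the root grows, so a single witness of failure of density propagates upward. There is no real obstacle; the lemma is essentially the observation that the collection of subsets of $2^{<\omega}$ that are \emph{not} somewhere dense forms an ideal closed under subsets, and its complement therefore enjoys the partition-regularity property being asserted. I would write the proof as a single short paragraph exactly along the above lines.
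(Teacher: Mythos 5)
Your proof is correct and follows essentially the same approach as the paper's: fix $\sigma$ witnessing density of $X$, observe that failure of somewhere-density for $X \cap Y$ propagates above $\sigma$, and conclude that $X\setminus Y$ must pick up the slack. The only cosmetic difference is that you establish density of $X\setminus Y$ above a possibly deeper node $\tau$, whereas the paper observes it is already dense above $\sigma$ itself; both land in the same place.
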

\begin{proof}
  Fix $\sigma$ such that $X$ is dense over $\sigma$. Suppose that for every $\tau \tgeq \sigma$, $X \cap Y$ fails to be dense above $\tau$. Then for every $\tau \tgeq \sigma$, $(X\setminus Y)\cap \upsett{\tau}$ is non-empty, so $X\setminus Y$ is dense above $\sigma$.
\end{proof}

\begin{lem}\label{lem:everything-everywhere-all-at-once}
  Every everywhere somewhere dense filter extends to some everywhere somewhere dense ultrafilter.
\end{lem}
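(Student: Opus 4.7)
The plan is a standard Zorn's lemma argument built on \cref{lem:dense-pigeons}. I would first verify that the union of an $\subseteq$-chain of everywhere somewhere dense filters on $2^{<\omega}$ is again a filter of the same kind. Closure under finite intersection, closure under superset, and properness pass through the union in the usual way, and any $X$ in the union lies in one of the filters in the chain, hence is somewhere dense. This lets me apply Zorn's lemma to the poset of everywhere somewhere dense filters extending $\Fc$, producing a maximal such filter $\Fc^\ast \supseteq \Fc$.

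The heart of the argument is showing $\Fc^\ast$ is an ultrafilter. Fix $Y \subseteq 2^{<\omega}$; I want to show $Y \in \Fc^\ast$ or $(2^{<\omega} \setminus Y) \in \Fc^\ast$. Split into two cases according to whether there exists $X \in \Fc^\ast$ with $X \cap Y$ not somewhere dense. If no such $X$ exists, then for every $X \in \Fc^\ast$ the set $X \cap Y$ is somewhere dense (in particular nonempty), so the filter generated by $\Fc^\ast \cup \{Y\}$ is a proper filter whose basic elements $X \cap Y$ are all somewhere dense; since every element of the generated filter is a superset of some such $X \cap Y$, and supersets of somewhere dense sets are clearly somewhere dense, this extension is everywhere somewhere dense. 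By maximality of $\Fc^\ast$, we get $Y \in \Fc^\ast$.

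In the other case, fix $X_0 \in \Fc^\ast$ with $X_0 \cap Y$ not somewhere dense. I claim the filter generated by $\Fc^\ast \cup \{2^{<\omega} \setminus Y\}$ is everywhere somewhere dense, which by maximality forces $2^{<\omega} \setminus Y \in \Fc^\ast$. Given any $X \in \Fc^\ast$, the set $X \cap X_0$ is in $\Fc^\ast$ and thus somewhere dense, so by \cref{lem:dense-pigeons} either $(X \cap X_0) \cap Y$ or $(X \cap X_0) \setminus Y$ is somewhere dense. The first option is ruled out because $(X \cap X_0) \cap Y \subseteq X_0 \cap Y$ and somewhere-denseness is preserved under supersets, which would contradict the choice of $X_0$. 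Hence $(X \cap X_0) \setminus Y$ is somewhere dense, and every element of the generated filter contains some such set and is therefore somewhere dense.

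I do not expect any serious obstacle here; the main subtlety is just to remember to intersect with the fixed $X_0$ in the second case so that the pigeonhole step actually kicks in against an $X$ we already know witnesses the failure on the $Y$-side. Everything else is bookkeeping around filters and Zorn's lemma.
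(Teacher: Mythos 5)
Your proof is correct and takes essentially the same approach as the paper's: the key dichotomy---given an everywhere somewhere dense filter and a set $Y$, at least one of the two extensions by $Y$ or its complement is again everywhere somewhere dense---is established via \cref{lem:dense-pigeons} in both, and your Zorn's-lemma packaging is interchangeable with the paper's transfinite induction. The one cosmetic difference is that where the paper invokes a cofinality argument to handle the bad case, you intersect with a fixed witness $X_0$ and note that somewhere-denseness passes to supersets; both are valid and convey the same idea.
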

\begin{proof}
  By transfinite induction it is sufficient to show that if $\Fc$ is an everywhere somewhere dense filter and $Y \subseteq 2^{<\omega}$, then either $\Fc\cup \{Y\}$ or $\Fc \cup \{2^{<\omega}\setminus Y\}$ generates an everywhere somewhere dense filter. So fix some such $Y$. By \cref{lem:dense-pigeons} we have that for each $X \in \Fc$, either $X\cap Y$ is somewhere dense or $X \setminus Y$ is somewhere dense. One of the sets  $\{X \in \Fc: X\cap Y~\text{somewhere dense}\}$ and $\{X \in \Fc: X \setminus Y~\text{somewhere dense}\}$ must be cofinal in $\Fc$. Assume without loss of generality that $\{X \in \Fc: X\cap Y~\text{somewhere dense}\}$ is cofinal in $\Fc$. This implies that actually $X\cap Y$ is somewhere dense for all $X \in \Fc$. Therefore $\Fc \cup \{Y\}$ generates an everywhere somewhere dense filter.
\end{proof}

We will see later in \cref{prop:char-uncountable-languages} that the following \cref{prop:countable-characterization-CTP} holds even for uncountable languages. That said, the proof in uncountable languages is a bit more technical, so we feel it is appropriate to present the friendlier proof for countable theories first.

\begin{prop}\label{prop:countable-characterization-CTP}
  Let $T$ be a countable theory. If $T$ has CTP, then there is a countable model $M$, a formula $\varphi(x,y)$, an $M$-heir-coheir $p(y)$, and an $M$-coheir $q(y)$ such that $p \res M = q \res M$ and $\varphi(x,y)$ $q$-Kim-divides but does not $p$-Kim-divide.
\end{prop}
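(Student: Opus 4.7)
The plan is to build both $p$ and $q$ as ultrafilter-averages along a sufficiently indiscernible $k$-CTP tree. Start from a $k$-CTP witness $(b_\sigma)_{\sigma \in 2^{<\omega}}$ and a formula $\varphi(x,y)$, and first apply a standard tree-Ramsey modeling argument to replace it by a tree $(c_\sigma)_\sigma$ with the same EM-type---still witnessing $k$-CTP via $\varphi$---but now sufficiently indiscernible that the fibers $\{\sigma : \models \theta(c_\sigma, \bar{d})\}$ behave uniformly as $\sigma$ varies. Let $M$ be any countable model containing $\{c_\sigma : \sigma \in 2^{<\omega}\}$.

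Given any ultrafilter $\Uc$ on $2^{<\omega}$, define a global type via $\theta(y,d) \in r_\Uc \iff \{\sigma : \models \theta(c_\sigma, d)\} \in \Uc$; such $r_\Uc$ is automatically finitely satisfiable in $\{c_\sigma\}\subseteq M$, hence an $M$-coheir. For $q$, let $\Uc_q$ extend the cofinite filter on some fixed path $\{\alpha \res n : n < \omega\}$ and set $q := r_{\Uc_q}$: a Morley sequence for $q$ traces out the path, so the $k$-inconsistency of $\varphi$ along paths forces $\varphi$ to $q$-Kim-divide. For $p$, observe that the filter $\Fc^\ast$ generated by the co-cones $2^{<\omega} \setminus \cset{\sigma}$ is everywhere somewhere dense: above any $\tau$ incomparable with the finitely many $\sigma$'s involved, the intersection contains the entire cone $\cset{\tau}$. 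By \cref{lem:everything-everywhere-all-at-once} extend $\Fc^\ast$ to an everywhere somewhere dense ultrafilter $\Uc_p$ and set $p := r_{\Uc_p}$. Since $\Uc_p \supseteq \Fc^\ast$, successive ultrafilter picks are pairwise incomparable, and a combinatorial analysis should show the iterated picks $\sigma_0, \sigma_1, \dots$ can be arranged as a right-comb in the tree, so the right-comb consistency of $\varphi$ gives that $\varphi$ does \emph{not} $p$-Kim-divide.

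The main obstacles are verifying (i) $p$ is an heir and (ii) $p \res M = q \res M$. For (i), given $\theta(y, b) \in p$, the fiber $X = \{\sigma : \models \theta(c_\sigma, b)\}$ is somewhere dense above some $\tau$; I would use the tree's indiscernibility to replace $b$ by a finite tuple of nodes $\bar c$ from $(c_\sigma)_\sigma \subseteq M$ exhibiting the same relative behavior with respect to $(c_\sigma)_\sigma$, producing $\bar c \in M$ with $\{\sigma : \models \theta(c_\sigma, \bar c)\} \in \Uc_p$. For (ii), tree indiscernibility forces the fiber of any $M$-formula $\theta(y)$ to be determined by the structural type of $\sigma$ relative to the finitely many parameter nodes appearing in $\theta$, so either the fiber or its complement contains a full cone, making membership in $\Uc_p$ and $\Uc_q$ coincide. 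The heir verification is the most delicate step, as it requires converting the somewhere-dense property of $X$ into an $M$-definable witness via a carefully chosen indiscernibility class of tuples in the tree---this is where the mesh between the combinatorial choice of $\Uc_p$ (extending $\Fc^\ast$) and the model-theoretic indiscernibility is most strained.
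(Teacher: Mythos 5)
Your proposal replaces the paper's simultaneous forcing-style construction of the model $M$ and the filter with a pre-chosen indiscernible tree and a fixed countable model, and this gap is fatal at several points.

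The most serious issue is the filter used for $p$. The filter $\Fc^\ast$ of co-cones $2^{<\omega}\setminus\cset{\sigma}$ only ensures that the nodes the ultrafilter concentrates on are (roughly) pairwise incomparable. But pairwise incomparability is exactly the \emph{antichain} condition, not the \emph{right-comb} condition. For example, $\{00,01,10,11\}$ is an antichain but not a right-comb, and nothing in the definition of $\Fc^\ast$ rules out the Morley sequence realizing this kind of configuration. CTP only promises consistency of $\varphi$ along right-combs, so "the iterated picks can be arranged as a right-comb" is precisely the thing your ultrafilter fails to guarantee --- and no amount of further "combinatorial analysis" rescues it. The paper's filter is generated not just by cones $\cset{\sigma_n}$ but by the carefully shaped set $Y=\bigcup_n\cset{\sigma_n\frown 0}$, which pins the ultrafilter to one fixed spine $(\sigma_n)_n$ with exactly one left branch at each level. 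That extra set is what forces right-comb shape, and it is exactly what is missing from $\Fc^\ast$. (Had your approach worked, it would essentially prove ATP implies the conclusion, which the paper explicitly poses as an open question, \cref{quest:ATP-Kim-char}.)

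The second problem is the heir property and the claim $p\res M=q\res M$. Tree indiscernibility controls the types of \emph{tuples drawn from the tree}, but $M$-formulas $\psi(y,z)$ carry parameters from all of $M$, and the heir property requires finding $d\in M$ --- not $d$ among tree nodes --- once one knows some $b$ in the monster puts $\psi(y,b)$ into $p$. An arbitrary monster element $b$ can relate to the tree in ways that no tree tuple (and no element of a generic countable $M$) can replicate. The paper addresses this head-on by building $M$ step by step, at each stage adding a witness $c$ into $M_{n+1}$ whenever the somewhere-dense configuration exists in the monster; alternatively, as the remark after the proof notes, one can fix $M$ in advance but must require a specific weak saturation property of $M$ (density above $\sigma$ realized in the monster implies density realized by some $d\in M$). "Any countable model containing the tree" does not have this property. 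The author even points out that tree indiscernibility plays no role in any proof in the paper, which should be read as a warning sign: the indiscernibility approach you sketch was consciously avoided, and the places you flag as "strained" are in fact where your argument breaks.
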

\begin{proof}
  Let $\varphi(x,y)$ and $(b_\sigma)_{\sigma \in 2^{<\omega}}$ witness that $T$ has CTP.

  Let $(k(n),m(n))_{n<\omega}$ be an enumeration of $\omega^2$ with the property that for each pair $\ell,o<\omega$, the set $\{n<\omega:\langle k(n),m(n) \rangle = \langle \ell,o \rangle\}$ is infinite.

  Fix a countable model $M_0 \supseteq \{b_\sigma : \sigma \in 2^{<\omega}\}$. Let $\sigma_0 = \varnothing$ and let $X_0 = \upsett{1}$. Note that $X_0$ is dense over $\sigma_0\frown 1$ and $X_0 \subseteq \upsett{\sigma_0\frown 1}$. (This will be our induction hypothesis.)

  At stage $n < \omega$, suppose we have a countable model $M_n$ and $\sigma_n$ and $X_n$ such that $X_n$ is dense over $\sigma_n\frown 1$ and $X_n \subseteq \upsett{\sigma_n\frown 1}$. Let $(\psi_{k(\ell)}^{n}(y,z))_{\ell<\omega}$ be an enumeration of all $M_n$-formulas. To get $M_{n+1}$, $\sigma_{n+1}$, and $X_{n+1}$, perform the following construction:
  \begin{itemize}
  \item If $\psi^{m(n)}_{k(n)}(y,z)$ has already been defined and there is a $c$ in the monster such that $X_n\cap\{\sigma:\psi^{m(n)}_{k(n)}(b_\sigma,c)\}$ is somewhere dense: Fix some such $c$. Let $M_{n+1}\supseteq M_nc$ be a countable model. Find $\tau\tgeq \sigma_n\frown 1$ such that $X_{n+\frac{1}{2}}\coloneqq X_n\cap\{\sigma:\psi^{m(n)}_{k(n)}(b_\sigma,c)\}$ is dense over $\tau$. Find $\sigma_{n+1} \tgeq \tau$ such that $b_{\sigma_{n+1}} \in X_{n+\frac{1}{2}}$, and let $X_{n+1} = X_{n+\frac{1}{2}}\cap\upsett{\sigma_{n+1}\frown 1}$.
  \item If the condition in the previous bullet point fails: Let $M_{n+1} = M_n$. Find $\sigma_{n+1}\tgeq \sigma_n\frown 1$ such that $b_{\sigma_{n+1}} \in X_n$. Let $X_{n+1} = X_n \cap \upsett{\sigma_{n+1}\frown 1}$.
  \end{itemize}
  Note that in both cases we have ensured that $b_{\sigma_{n+1}} \in X_n$.

  After the construction is completed, let $M = \bigcup_{n<\omega}M_n$. Let $\Fc$ be the filter generated by $\{X_n:n<\omega\}\cup\left\{ Y \right\}$ where $Y \coloneqq \bigcup_{n<\omega}\upsett{\sigma_n\frown 0}$. Note that since each $X_n$ is dense above $\sigma_{n+1}$, this filter is everywhere somewhere dense. Let $\Uc$ be an everywhere somewhere dense ultrafilter extending $\Fc$. Let $p(y)$ be the global $M$-coheir corresponding to $\Uc$ (i.e., $p(y) = \{\psi(y,c):\{\sigma \in 2^{<\omega} : \psi(b_\sigma,c)\} \in \Uc\}$). Let $q(y)$ be any non-realized global $M$-coheir finitely satisfiable in $\{b_{\sigma_n}:n<\omega\}$. Note that $\{\varphi(x,b_{\sigma_n}) : n < \omega\}$ is uniformly inconsistent (since the $\sigma_n$'s form a path in $2^{<\omega}$). Therefore $\varphi(x,y)$ must $q$-Kim-divide.

  \vspace{1em}

  \begin{claim}
    $p\res M = q \res M$.
  \end{claim}
  \begin{claimproof}
    Fix an $M$-formula $\psi(y)\in p(y)$. $\psi(y)$ is actually an $M_k$-formula for some $k<\omega$. By the choice of our enumeration, this means that there was a stage $n$ at which $\psi^{m(n)}_{k(n)}(y,z)$ was defined and equal to $\psi(y)$ (where $z$ is a dummy variable). Since $\psi(y) \in p(y)$ and since $\Uc$ is everywhere somewhere dense, we must have chosen the first bullet point at this stage. Hence $\psi(b_\sigma)$ holds for all $\sigma \in X_{n+1}$, whereby $\psi(b_{\sigma_\ell})$ holds for all $\ell > n+1$. Therefore $\psi(y) \in q(y)$ as well. Since we can do this for every $M$-formula $\psi(y) \in p(y)$, we have that $p \res M = q \res M$.
  \end{claimproof}

  \vspace{1em}

  \begin{claim}
    $p$ is an heir over $M$.
  \end{claim}
  \begin{claimproof}
    Fix an $M$-formula $\psi(y,z)$. Once again, there must have been a stage $n$ at which $\psi^{m(n)}_{k(n)}$ was defined and equal to $\psi$. Suppose that there is a $d$ in the monster such that $\psi(y,d) \in p(y)$. Since $\Uc$ is everywhere somewhere dense, this implies that we chose the first bullet point at stage $n$, so we found some $c$, added this $c$ to $M_{n+1}$, and moved to a set $X_{n+1}$ satisfying $\psi(b_{\sigma},c)$ for all $\sigma \in X_{n+1}$. Therefore $\psi(y,c) \in p(y)$. Since we can do this for any $M$-formula $\psi(y,z)$, we have that $p$ is an heir over $M$.
  \end{claimproof}

  \vspace{1em}

  \begin{claim}
    $\varphi(x,y)$ does not $p$-Kim-divide.
  \end{claim}
  \begin{claimproof}
    Recall that $Y \coloneqq \bigcup_{n<\omega}\upsett{\sigma_n\frown 0}$ is in $\Uc$. Also note that by construction, $\upsett{\sigma_n} \in \Uc$ for each $n<\omega$. Let $Y_n = Y \cap \upsett{\sigma_n}$ for each $n<\omega$. Note that if $n_0,n_1,\dots,n_{k-1}$ is an increasing sequence of integers and if $\tau_i \tgeq \sigma_{n_i}\frown 0$ for each $i<k$, then $\{\tau_i : i < k\}$ is a right-comb.

    Let $(e_k)_{k<\omega}$ be a Morley sequence generated by $p$. We have by assumption that for any finite right-comb $Z \subseteq Y$, $\{\varphi(x,b_\tau):\tau \in Z\}$ is consistent. Suppose that for some $k<\omega$ (possibly $0$), we've shown that for every finite right-comb $Z \subseteq Y$, $\{\varphi(x,b_\tau):\tau \in Z\}\cup\{\varphi(x,e_i):i<k\}$ is consistent. For any such $Z$, there is an $n$ such that for any $\eta \in Y_n$, $Z \cup \{\eta\}$ is a right-comb. (In particular, this will be true for any sufficiently large $n$.) This implies that $\{\varphi(x,b_\tau):\tau \in Z\} \cup \{\varphi(x,e_i): i < k+1\}$ is consistent. Hence, by induction, we have that $\{\varphi(x,e_i): i < \omega\}$ is consistent and so $\varphi(x,y)$ does not $p$-Kim-divide.
  \end{claimproof}

\vspace{1em}
  
  Therefore $p$ and $q$ satisfy the required conditions.
\end{proof}

One thing to note is that instead of building the model in the proof of \cref{prop:countable-characterization-CTP} at the same time as the filter, we could instead have built a fixed model satisfying the following weak saturation property that is analogous to the one found in \cref{prop:easy-construction}:
\begin{itemize}
\item[$ $] For every $M$-formula $\varphi(x,y)$ and $\sigma \in 2^{<\omega}$, if there is a $c$ in the monster such that $\{\tau \in 2^{<\omega} : \varphi(b_\tau,c)\}$ is dense above $\sigma$, then there is a $d\in M$ such that $\{\tau \in 2^{<\omega} : \varphi(b_\tau,d)\}$ is dense above $\sigma$.
\end{itemize}
This ends up being a bit more work to state than the given proof of \cref{prop:countable-characterization-CTP}, but this perspective highlights the similarity between \cref{prop:countable-characterization-CTP} and \cref{prop:easy-construction}.

Something that is frustrating and interesting is that, at least to the present author, there doesn't seem to be a clear way to prove the ATP analog of \cref{prop:countable-characterization-CTP}. That is to say, it is unclear if one can use an instance of ATP to build a failure of Kim's lemma for strongly bi-invariant types.

\begin{quest}\label{quest:ATP-Kim-char}
  If $T$ has ATP, does it follow that there is a model $M$, a formula $\varphi(x,y)$, a strongly $M$-bi-invariant type $p(y)$, and an $M$-invariant type $q(y)$ such that $p \res M = q \res M$ and $\varphi(x,y)$ $q$-Kim-divides but does not $p$-Kim-divide?
\end{quest}


\subsection{Characterization of the comb tree property}

\begin{prop}\label{prop:Kim-failure-implies-CTP}
  Suppose there is a formula $\varphi(x,y)$ and two $A$-invariant types $p(y)$ and $q(y)$ such that $p \res A= q\res A$ and $\varphi(x,y)$ $q$-Kim-divides but does not $p$-Kim-divide.
  \begin{enumerate}
  \item\label{can-has-CTP} If $p$ is bi-invariant, then $T$ has CTP.
  \item\label{can-has-ATP} If $p$ is strongly bi-invariant, then $T$ has ATP.
  \end{enumerate}
\end{prop}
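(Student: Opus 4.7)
Plan. The strategy is to construct a binary tree $(b_\sigma)_{\sigma \in 2^{<\omega}}$ over a sufficiently saturated base $M \supseteq A$ witnessing CTP (resp.\ ATP) via $\varphi(x,y)$, arranged so that every root-to-leaf path $(b_{\alpha \res n})_{n<\omega}$ has the joint type of a Morley sequence in $q$ over $M$---forcing uniform $k$-inconsistency of $\{\varphi(x, b_{\alpha \res n}) : n < \omega\}$ from $q$-Kim-dividing of $\varphi$---while every right-comb (resp.\ every antichain) $X$, enumerated canonically, has the joint type of a Morley sequence in $p$ over $M$, forcing consistency of $\{\varphi(x, b_\sigma) : \sigma \in X\}$ from $\varphi$ not $p$-Kim-dividing. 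Since $p \res M = q \res M$, the two patterns agree on single-element types over $M$; the bi-invariance of $p$ is the symmetry that lets the two higher-order joint-type structures coexist on a single tree.

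First I would pass to a sufficiently saturated $M \supseteq A$ (which preserves all hypotheses) and fix the uniform dividing bound $k$ coming from $q$-Kim-dividing of $\varphi$. By compactness, it suffices to build, for every $n$, a finite binary tree of height $n$ with the stated path and right-comb properties. I would build these by induction via a ``grow two subtrees off a new root'' recursion: to pass from height $n$ to height $n+1$, plant a new root $b$ and attach a copy of the height-$n$ tree on the right, chosen to be $q$-Morley-generic over $b$ together with all previously placed nodes, and a copy on the left, chosen so that the already-placed right subtree realizes the appropriate $p$-Morley tail \emph{over} the new left subtree. Bi-invariance of $p$ is exactly what lets this final step go through: with the right subtree in place as the tail of a potential right-comb, bi-invariance supplies an $M$-invariant extension of its type, to which one can then prepend a fresh $p$-realization for the left-pick, producing joint types matching a $p$-Morley sequence for every right-comb threaded through the new root.

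The main obstacle is the simultaneous double bookkeeping: each new node must fit into the $q$-Morley structure of every path through it \emph{and} into the $p$-Morley structure of every right-comb ending at it, and bi-invariance of $p$ is precisely what reconciles these two directions---without it one can build Morley sequences only ``forward.'' A standard Ramsey/indiscernibility extraction applied to the finite trees then delivers an indiscernible tree whose joint type realises simultaneously the $q$-Morley path pattern and the $p$-Morley right-comb pattern, and compactness yields the infinite witness for part \ref{can-has-CTP}. For part \ref{can-has-ATP}, the same scheme is used, but every antichain---not only every right-comb---must be $p$-Morley in its canonical enumeration, so the ``prepending'' step must succeed against multi-element left-picks sitting at a common branching level; strong bi-invariance of $p$, i.e.\ bi-invariance of $p^{\otimes n}$ for every $n$, is exactly what allows such multi-element blocks to be prepended as a single $M$-invariant tuple, yielding the ATP witness.
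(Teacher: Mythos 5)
Your high-level plan is the right one---build finite trees by a two-subtree recursion using bi-invariance, then apply compactness---and it matches the structure of the paper's proof. But two steps in the description don't work as stated, and they are precisely the places where the argument is delicate.

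First, the construction order for the $q$-condition is backwards. The path condition you need is that each node realizes $q$ over its \emph{strict descendants} (so that each root-to-leaf path is a reverse $q$-Morley sequence, which together with $q$-Kim-dividing gives the required $k$-inconsistency). This forces the root to be chosen \emph{last}: you first produce both subtrees and then pick the root to realize $q$ restricted to $A$ together with everything below it. You instead say ``plant a new root $b$'' first and choose the right subtree ``to be $q$-Morley-generic over $b$.'' That makes the subtree $q$-generic over the root, which is the wrong direction; it gives forward-Morley paths, and those have no connection to the hypothesis that $\varphi$ $q$-Kim-divides when read from root to leaf. There is no bi-invariance assumption on $q$ to justify reversing this, so the root-first order needs to be dropped.

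Second, the bi-invariance step is described as ``prepend a fresh $p$-realization for the left-pick,'' which misses what the step actually has to deliver. For the right-comb (or antichain) property you need \emph{every} node of the old subtree to simultaneously realize $p$ over the entire new sibling subtree---not merely to be able to add one new $p$-generic node. The mechanism is: let the old height-$n$ tree be the $0$-side; find a single $d\models p$ restricted to $A$ and the $0$-side; by bi-invariance of $p$, extend $\tp(\text{$0$-side}/Ad)$ to an $A$-invariant global type $r$; realize $r$ restricted to $A$ and the $0$-side to produce the new $1$-side. Because $r$ is $A$-invariant and every old node $b_\tau$ has $b_\tau\equiv_A d$ (all nodes realize $p\res A=q\res A$), every $b_\tau$ automatically realizes $p$ over the new $1$-side. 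The single element $d$ is then discarded. Nothing is ``prepended'' per se, and it is the invariance of $r$---not an element-at-a-time extension---that makes all the right-combs work at once. The ATP case works the same way with $p^{\otimes\omega}$ in place of $p$, using strong bi-invariance; your instinct there is right, but the same invariance-of-$r$ mechanism is what handles the multi-element $0$-side blocks.

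Two smaller points: passing to a saturated $M\supseteq A$ is unnecessary (the paper works directly over $A$, and $A$-bi-invariance is not automatically $M$-bi-invariance anyway), and no Ramsey/tree-indiscernibility extraction is needed---compactness on the finite trees already yields the infinite witness. The left/right labeling you chose (right realizes $p$ over left) is a mirror image of the paper's convention and would be fine after relabeling, but the ordering issues above are genuine and need to be fixed before the proof goes through.
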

\begin{proof}
For \ref{can-has-CTP}, suppose that $\{\varphi(x,e_i):i < \omega\}$ is $k$-inconsistent for any Morley sequence $e_{<\omega}$ generated by $q$.
  
  We will argue by induction that for any $n<\omega$, there is a family $(b_\sigma)_{\sigma \in 2^{\leq n}}$ of parameters realizing $p\res A$ such that for each $\sigma \in 2^{<n}$,
  \begin{itemize}
  \item for each $\tau \tgeq \sigma \frown 0$, $b_\tau \models p \res A \cup \{b_{\eta}:\eta \tgeq \sigma\frown 1\}$ and
  \item $b_\sigma \models q \res A \cup \{b_\eta : \eta \tgess \sigma\}$.
  \end{itemize}
  Note that the second condition clearly implies that any path through such a tree is a (reverse) Morley sequence generated by $q$ (so in particular, $\{\varphi(x,b_{\eta \res i}) : i \leq n\}$ is $k$-inconsistent for any $\eta \in 2^n$). The first condition, moreover, implies that any right-comb in the tree is a Morley sequence generated by $p$ (in some enumeration). Therefore for any right-comb $X\subseteq 2^{\leq n}$, $\{\varphi(x,b_\sigma):\sigma \in X\}$ is consistent. So if we can show that these trees exist for all $n$, we will have established that $T$ has $k$-CTP.

  For $n = 1$, the condition is trivial, since there is only a single $\sigma$ in the tree.

  Suppose we have built such a tree $(c_\sigma)_{\sigma \in 2^{\leq n}}$ for some $n$. Start the next tree by setting $b_{0\frown \sigma} = c_\sigma$ for each $\sigma \in 2^{\leq n}$. Find $d \models p \res A \cup \{b_\sigma : \sigma \in 2^{\leq n+1},~\sigma\tgeq 0\}$. Since $p$ is $A$-bi-invariant, we can find an $A$-invariant type $r(\xbar)$ extending $\tp(b_{\tgeq 0}/Ad)$. Now find $b_{\tgeq 1}$ such that $b_{\tgeq 1} \models r \res Ab_{\tgeq 0}$. By $A$-invariance of $r$, we have that for each $\sigma \tgeq 0$, $b_\sigma \models p \res A b_{\tgeq 1}$. Finally, let $b_{\varnothing} \models q \res A b_{\tgess \varnothing}$.

  Since we can do this for any $n$, by induction, we have that $T$ has $k$-CTP.

  For \ref{can-has-ATP}, then we can build similar trees $(b_\sigma)_{\sigma \in 2^{\leq n}}$ satisfying the additional property that
  \begin{itemize}
  \item
    for each antichain $B$ of elements of $2^{\leq n}$, $\{b_\sigma : \sigma \in B\}$ is a Morley sequence in $p$ over $A \cup \{b_\eta  : \eta \tgeq \sigma \frown 1\}$.
  \end{itemize}
  To see that this is possible, we just need to modify the induction step. Suppose we have built such a tree $(c_\sigma)_{\sigma \in 2^{\leq n}}$ for some $n$. Start the next tree by setting $b_{0 \frown \sigma} = c_\sigma$ for each $\sigma \in 2^{\leq n}$. Find $\dbar \models p^{\otimes \omega}\res A \cup \{b_\sigma : \sigma \in 2^{\leq n+1},~\sigma \tgeq 0\}$. Since $p^{\otimes \omega}$ is $A$-bi-invariant, we can find an $A$-invariant type $r(\xbar)$ extending $\tp(b_{\tgeq 0}/A \dbar)$. Then we can find $b_{\tgeq 1}$ such that $b_{\tgeq 1} \models r \res Ab_{\tgeq 0}$. Now, let $B$ be an antichain of elements of $2^{\leq n+1}$. If $B = \{\varnothing\}$, then the statement is trivial. Otherwise, we have that $B\cap \cset{0}$ and $B \cap \cset{1}$ are each antichains. By the induction hypothesis, this means that $\bbar^0 = \{b_\sigma : \sigma \in B\cap \cset{0}\}$ is a Morley sequence in $p$ and $\bbar^1 = \{b_\sigma : \sigma \in B\cap \cset{1}\}$ is a Morley sequence in $p$. By construction, this means that $\bbar^0$ realizes the same type over $A$ as some initial segment of $\dbar$. Therefore $\bbar^0 \models p^{\otimes |B \cap\cset{0}|}\res A \bbar^1$. Since $\bbar^1$ is a Morley sequence in $p$ over $A$, this implies that $\bbar^0 \bbar^1$ is a Morley sequence in $p$ over $A$.

  Finally, since we can do this for any $n$, we have that $T$ has $k$-ATP.
\end{proof}

\begin{thm}\label{thm:CTP-char}
  Fix a theory $T$. The following are equivalent.
  \begin{enumerate}
  \item\label{CTP-char-1} $T$ is NCTP.
  \item\label{CTP-char-2} For any set of parameters $A$ and formula $\varphi(x,b)$, if $\varphi(x,b)$ Kim-divides over $A$, then $\varphi(x,b)$ $p$-Kim-divides for every $A$-bi-invariant type $p(y)$ extending $\tp(b/A)$.
  \item\label{CTP-char-3} For any model $M$ and formula $\varphi(x,b)$, if $\varphi(x,b)$ $q$-Kim-divides for some $M$-coheir $q(y) \supseteq \tp(b/M)$, then $\varphi(x,b)$ $p$-Kim-divides for every $M$-heir-coheir $p(y) \supseteq \tp(b/M)$.
  \end{enumerate}
\end{thm}
\begin{proof}
  \cref{prop:Kim-failure-implies-CTP} gives that (\ref{CTP-char-1}) implies (\ref{CTP-char-2}), and it is immediate that (\ref{CTP-char-2}) implies (\ref{CTP-char-3}). By \cref{prop:countable-characterization-CTP} (or \cref{prop:char-uncountable-languages} if $\Lc$ is uncountable), we have that if $T$ has CTP, then (\ref{CTP-char-3}) fails. Therefore (\ref{CTP-char-3}) implies (\ref{CTP-char-1}).
\end{proof}
  





\begin{cor}\label{cor:NKL-to-NATP}
  If a theory $T$ satisfies new Kim's lemma (in the sense of \cite{NKL}), then it is NCTP (and therefore also NATP).
\end{cor}
\begin{proof}
  \cref{thm:CTP-char} implies that if $T$ has CTP, then there is a formula that Kim-divides over a model but does not Kim-divide with regards to some heir-coheir. Heir-coheirs are strictly invariant and therefore Kim-strictly invariant, so we have that $T$ fails to satisfy new Kim's lemma. Finally, since right-combs are a special kind of antichain, NCTP clearly implies NATP.
\end{proof}


One issue which we have been ignoring up until now is whether $k$-CTP implies $\ell$-CTP for $\ell < k$. This is an important structural property of SOP$_1$ and ATP. The proofs going into the proof of \cref{thm:CTP-char} clearly preserve the relevant degree of inconsistency. We have been unable to resolve this question, and likewise we have been unable to show that CTP is always witnessed by a formula in a single free variable. Despite the similarity between CTP and ATP, the proofs of these facts for ATP in \cite{Ahn2022} seem to rely pretty heavily on nice structural properties of antichains that right-combs do no share (e.g., the fact that an `antichain of antichains' is an antichain, which is used in \cite[Lem.~3.20]{Ahn2022}).

\begin{quest}
  If $T$ has CTP, does it follow that $T$ has $2$-CTP?
\end{quest}

\begin{quest}
  If $T$ has CTP, does it have CTP witnessed by a formula with a single free variable?
\end{quest}

One thing to note is that the proofs of these kinds of facts often make good use of indiscernible trees. As observed in \cite[Rem.~5.6]{Some-Remarks-Kim-dividing-NATP}, CTP is always witnessed by a strongly indiscernible tree. While this will almost certainly be an important tool for studying NCTP theories at some point, we nevertheless find it interesting that tree indiscernibility plays no role in any of the proofs in this paper.

\section{Reliably invariant types}
\label{sec:reliably}

\subsection{What should Kim-dividing over invariance bases be?}%
\label{sec:what-should}

In \cite{KaplanRamseyOnKim}, Kaplan and Ramsey originally defined Kim-dividing in terms of dividing along sequences generated by arbitrary invariant types. Pretty quickly in their analysis, however, it becomes clear that the natural concept is dividing along sequences generated by coheirs. Ramsey has in conversation consistently expressed the opinion that the definition of Kim-dividing in terms of coheirs is more natural. Coheirs satisfy two important properties that arbitrary invariant types do not:
\begin{itemize}
\item (\emph{Expansion}) If $p$ is an $M$-coheir and $M^\dagger$ is an expansion of $M$, then there is an $M^\dagger$-coheir $q$ extending $p$.
\item (\emph{Left extension}) If $p(x)$ is an $M$-coheir and $q(x,y)$ is a type over $M$ extending $p \res M$, then there is an $M$-coheir $r(x,y)$ extending $p(x)\cup q(x,y)$.
\end{itemize}
In the context of NSOP$_1$ theories (over models), the distinction between these two definitions becomes immaterial given the relevant Kim's lemma. Dividing along some invariant Morley sequence implies dividing along all invariant Morley sequences. In SOP$_1$ theories, however, a reasonable question, frequently asked by Kruckman, is whether the original definition of Kim-dividing is formula independent, i.e., if $\varphi(x,b)$ and $\psi(x,c)$ are logically equivalent and $\varphi(x,b)$ Kim-divides over a model $M$, then $\psi(x,c)$ Kim-divides over $M$. It follows from \cite[Thm.~3.10]{Some-Remarks-Kim-dividing-NATP} that in an NATP theory, this is always the case. In \cref{sec:Kim-dividing-extendibly}, we give an example showing that this can fail for ATP theories.

When attempting to define a robust notion of Kim-dividing over invariance bases, one ideally would like to retain the two nice properties of coheirs mentioned above. The issue with expansion, however, is that it characterizes coheirs over invariance bases.

\begin{prop}\label{prop:coheir-characterization}
  Fix a set of parameters $A$.
  \begin{enumerate}
  \item\label{if-invariant-acl-is-dcl} If $A$ is an invariance base,\footnote{Note that we are only considering invariance with regards to ordinary type, rather than Kim-Pillay or Lascar strong type, although over a set that is an invariance base in this sense, these notions collapse by essentially the same argument as in the proof of \ref{if-invariant-acl-is-dcl}.} then $\acl(A) = \dcl(A)$. 
  \item\label{invariant-tfae} Assume that $A = \acl(A)$. Fix an $A$-invariant type $p(x)$. The following are equivalent.
    \begin{enumerate}
    \item\label{invariant-in-expansions} For any model $M \supseteq A$ and any expansion $M^\dagger$ of $M$, $p\res M$ has a completion in $S(M^\dagger)$ that is $\Aut(M^\dagger/A)$-invariant.
    \item\label{dcl-sat} $p(x)$ is finitely satisfiable in $A$.
    \end{enumerate}
  \end{enumerate}
\end{prop}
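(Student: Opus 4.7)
I would prove (1) by a classical Morley-sequence-plus-pigeonhole argument. Fix $a \in \acl(A)$ with algebraic class of size $n$ over $A$, and use the invariance-base hypothesis to extend $\tp(a/A)$ to a global $A$-invariant type $p$. Iterating $p$ produces a Morley sequence $(a_i)_{i<\omega}$ in which every $a_i$ realizes $\tp(a/A)$, hence lies in the $n$-element algebraic class; by pigeonhole there are $i<j$ with $a_i=a_j$. Because $a_j \models p\res Aa_i$, the formula $x = a_i$ belongs to $p\res Aa_i$, and therefore to $p$. By $A$-invariance, $x = a'$ belongs to $p$ for every $A$-conjugate $a'$ of $a_i$, and consistency of $p$ then forces $a' = a_i$; thus $a_i$ has a unique $A$-conjugate and lies in $\dcl(A)$. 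Since $a \equiv_A a_i$, we conclude $a \in \dcl(A)$.

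For (b)$\Rightarrow$(a) of (2), finite satisfiability in $A$ is language-independent: the interpretation of an $\Lc$-formula does not change under expansion, so $p\res M$, viewed as a partial $\Lc^\dagger$-type, remains finitely satisfiable in $A$ and extends by a standard maximality argument to a complete $q \in S(M^\dagger)$ that is still finitely satisfiable in $A$. This $q$ is $\Aut(M^\dagger/A)$-invariant because a witness $a \in A$ for $\chi(x,b^\dagger)\in q$ is fixed by every $\sigma \in \Aut(M^\dagger/A)$, and hence also witnesses $\chi(x,\sigma(b^\dagger))$, which must therefore lie in $q$.

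For the converse (a)$\Rightarrow$(b), I would argue the contrapositive. Suppose $p$ is not finitely satisfiable in $A$ and fix an $\Lc(\mathfrak{C})$-formula $\varphi(x, m) \in p$ with $\varphi(A, m) = \varnothing$; take $M \supseteq A \cup m$ sufficiently saturated. The plan is to cook up an expansion $M^\dagger$ of $M$ such that any completion of $p\res M$ in $S(M^\dagger)$ containing $\varphi(x,m)$ is forced to break $\Aut(M^\dagger/A)$-invariance. The hypothesis $A = \acl(A)$ (which, via $\dcl \subseteq \acl$, also gives $A = \dcl(A)$) enters here: it guarantees $A$ is pointwise fixed by $\Aut(M^\dagger/A)$ in any expansion, so that $\Aut(M^\dagger/A)$-invariance is a genuine constraint on how completions treat elements of $M \setminus A$.

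The main obstacle is designing the expansion. Naive candidates---naming $m$ as a constant, adding a predicate for $\varphi(M, m)$, or for the $A$-orbit in $M$ of a realization of $\varphi(x, m)$---tend either to leave $\Aut(M^\dagger/A)$ so large that a coheir-over-$M$ extension of $p\res M$ remains trivially invariant, or to collapse $\Aut(M^\dagger/A)$ to the identity and trivialize invariance. The correct construction must thread this needle by leveraging the failure $\varphi(A,m)=\varnothing$: the expansion should force any completion containing $\varphi(x,m)$ to commit to an element of $M\setminus A$ while keeping $\Aut(M^\dagger/A)$ transitive enough on the family of such candidates that no orbit-uniform completion is consistent, thereby manufacturing a witness in $A$ by contradiction.
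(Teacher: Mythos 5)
Your proof of (1) is correct and is in essence the same argument as the paper's, just routed through a Morley sequence and pigeonhole where the paper argues directly (restrict the invariant extension $q$ to $\acl(A)$, note a realization must be one of the finitely many $a_i$, so $x=a_i \in q$, and invariance then collapses the $A$-orbit of $a_i$ to a singleton). Both proofs pivot on the same observation: once the invariant type contains $x=a_i$, invariance forces $a_i$ to equal each of its $A$-conjugates.

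For (2), (b)$\Rightarrow$(a): your conclusion is right but the stated justification skips a step. The fact that a witness $a\in A$ for $\chi(x,b^\dagger)\in q$ also witnesses $\chi(x,\sigma(b^\dagger))$ does not by itself put $\chi(x,\sigma(b^\dagger))$ into the complete type $q$. The standard argument goes by contradiction: if $\chi(x,b^\dagger)\in q$ and $\neg\chi(x,\sigma(b^\dagger))\in q$, then their conjunction is in $q$, finite satisfiability in $A$ gives $a\in A$ with $\chi(a,b^\dagger)\wedge\neg\chi(a,\sigma(b^\dagger))$, and applying $\sigma$ (which fixes $a$) contradicts the second conjunct. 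The paper sidesteps this by fixing an ultrafilter on $A$ once and for all and using its average type in each expansion, which is invariant because it is determined by sets of elements of $A$ rather than by formulas.

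The real gap is (a)$\Rightarrow$(b). Your proposal correctly identifies the plan (contrapositive, find a ``rigidifying'' expansion) and even correctly diagnoses the difficulty, but it stops at ``the correct construction must thread this needle'' without producing the construction. This is precisely the content of the paper's Lemma~\ref{lem:generic-split} and the argument in Appendix~\ref{sec:coheir-inv-ext-exp}, and it is the technical heart of the proposition. The missing idea: build, by a forcing argument, an expansion of a sufficiently saturated $M\supseteq A$ with new predicates $P,U,U^*$ partitioning the domain so that $P$ names $A$ and the expanded elementary diagram is symmetric under swapping $U$ and $U^*$ relative to parameters in $A$ (here $A=\acl(A)$ is used to run a full-existence argument with $\ind^{\mathrm{a}}$, which makes the generic choice of $(U,U^*)$ indistinguishable from its swap). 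One then passes to a homogeneous extension $N$, extracts an automorphism $f\in\Aut(N/A)$ with $f[U^N]=U^N_*$, and replaces the two predicates by the single equivalence relation $E$ with classes $\{A,U^N,U^N_*\}$. Since $N=A\sqcup U^N\sqcup U^N_*$, any type over $(N,E)$ must concentrate on one of the three $E$-classes; the chosen $\psi(x,e)\in p$ with $\psi(A,e)=\varnothing$ rules out the class of $A$, and $f$-invariance rules out each of the other two, which forces a contradiction. Without something like this generic-partition lemma, the contrapositive argument you sketch does not close, so as written the proposal does not establish (a)$\Rightarrow$(b).
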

\begin{proof}
  For \ref{if-invariant-acl-is-dcl}, suppose that $A$ is an invariance base. Fix an algebraic type $p(x) \in S(A)$. Since $A$ is an invariance base, $p(x)$ has an $A$-invariant global extension $q(x)$. Let $a_0,\dots,a_{n-1}$ be the realizations of $p$. Let $b$ be a realization of $q\res \acl(A)$. It must be the case that $b = a_i$ for some $i<n$, but by invariance, this implies that $n=1$.

  \ref{dcl-sat} $\To$ \ref{invariant-in-expansions}. If $p(x)$ is finitely satisfiable in $A$, then there is an ultrafilter $\Uc$ on $A$ whose average type is $p(x)$. The average type of $\Uc$ will be an invariant type extending $p(x)$ in any expansion of the theory.

  $\neg$\ref{dcl-sat} $\To$ $\neg$\ref{invariant-in-expansions}. Our proof of this is somewhat technical, so we have opted to put it in \cref{sec:coheir-inv-ext-exp}. %
\end{proof}

In particular, if $A$ is an invariance base, then every type over $A$ has an $A$-invariant extension satisfying \ref{invariant-in-expansions} if and only if $\dcl(A)$ is a model.

Fortunately, though, expansion seems to be more of a convenience than a necessity. The second property, however, seems to be fairly significant. All of this might suggest focusing on the following special class of invariant types.

\begin{defn} \label{defn:extendibly-invariant}
  A global type $p(x)$ is \emph{extendibly $A$-invariant} if for any $q(x,y) \in S(A)$ extending $p\res A$, $p(x) \cup q(x,y)$ extends to an $A$-invariant type. %
\end{defn}

As we already said, coheirs over models are always extendibly invariant. The example given in \cref{sec:Kim-dividing-extendibly} shows that not all invariant types over models are extendibly invariant.

While extendibly invariant types might seem relatively special, we get them for free over invariance bases. %

\begin{prop}\label{prop:extendible-nice}
  Let $A$ be a set of parameters.
  \begin{enumerate}
  \item\label{if-extendible-then-invariance-base} If there is an extendibly $A$-invariant type, then $A$ is an invariance base.
  \item \label{extendible-characterization} An $A$-invariant type $p(x)$ is extendibly $A$-invariant if and only if for every formula $\varphi(x,b) \in p(x)$ and every $A$-formula $\psi(x,y)$, if $\varphi(x,b)\wedge \psi(x,y)$ quasi-forks\footnote{Recall that a formula $\chi(x,c)$ \emph{quasi-forks} over $A$ if there is no $A$-invariant type containing $\chi(x,c)$. This is equivalent to implying a disjunction of formulas that quasi-divide over $A$.} over $A$, then $p(x) \vdash \neg \exists y \psi(x,y)$.
  \item\label{extendible-restriction} If $p(x,z)$ is extendibly $A$-invariant and $r(x)$ is the restriction of $p(x,z)$ to $x$, then $r(x)$ is extendibly $A$-invariant.
  \item\label{extendible-saturation} If $p(\xbar)$ is the type of an $(|A|+|T|)^+$-saturated model, then any $A$-invariant extension of $p(\xbar)$ is extendibly $A$-invariant.
  \item\label{if-invariance-base-then-extendible} If $A$ is an invariance base, then every type over $A$ extends to an extendibly $A$-invariant type.
  \item\label{extendibly-extendible} If $p(x)$ is an extendibly $A$-invariant type, then for any $q(x,y) \in S(A)$ extending $p\res A$, $p(x)\cup q(x,y)$ extends to an extendibly $A$-invariant type.
  \end{enumerate}
\end{prop}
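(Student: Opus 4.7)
The plan is to prove the six parts largely in order, using the formula-level characterization of (\ref{extendible-characterization}) as the chief working tool once it is established. Parts (\ref{if-extendible-then-invariance-base}), (\ref{extendible-characterization}), and (\ref{extendible-restriction}) are essentially direct from the definitions. For (\ref{if-extendible-then-invariance-base}), given $r(y) \in S(A)$, I would form any completion $q(x,y) \in S(A)$ of $p\res A \cup r(y)$, apply extendibility of $p$, and restrict the resulting $A$-invariant type to $y$. For (\ref{extendible-characterization}), the forward direction is by contradiction: if $\varphi(x,b) \in p$ and $\psi(x,y)$ is an $A$-formula with $\varphi \wedge \psi$ quasi-forking and $\exists y\,\psi(x,y) \in p$, then any completion $q \in S(A)$ of $p \res A \cup \{\psi(x,y)\}$ would yield via extendibility an $A$-invariant type containing a quasi-forking formula. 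The backward direction follows by compactness (since $A$-invariance is a closed condition on the type space), reducing to pairs $\varphi(x,b) \in p$, $\psi(x,y) \in q$; a quasi-fork here together with the hypothesized formula condition would force $\neg \exists y\,\psi(x,y) \in p$, contradicting $\exists y\,\psi(x,y) \in q \res x = p \res A \subseteq p$. For (\ref{extendible-restriction}), complete the given $q(x,y)$ to a $q'(x,y,z) \in S(A)$ extending $p\res A$, apply extendibility of $p(x,z)$, and restrict.

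The technical core is (\ref{extendible-saturation}). To verify the criterion of (\ref{extendible-characterization}) for an $A$-invariant extension of $p(\xbar)$, I would realize $p$ in a larger monster by a tuple $\bar d$ enumerating an $(|A|+|T|)^+$-saturated model $N$ containing $A$. Given $\varphi(\xbar,c) \in p$ and an $A$-formula $\psi(\xbar,y)$ with $\exists y\, \psi(\xbar,y) \in p$, the formula $\psi(\bar d, y)$ is consistent over $N$ with only finitely many non-$A$ parameters, so by saturation it has a realization $d_j$ among the coordinates of $\bar d$. The type $\tp(\bar d\,d_j / \mathfrak{C})$ is obtained from $p$ by naming the $j$-th coordinate, which preserves $A$-invariance, and it witnesses that $\varphi(\xbar, c) \wedge \psi(\xbar, y)$ does not quasi-fork.

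Parts (\ref{if-invariance-base-then-extendible}) and (\ref{extendibly-extendible}) combine (\ref{extendible-saturation}) and (\ref{extendible-restriction}) with the invariance-base hypothesis. For (\ref{if-invariance-base-then-extendible}), realize $r(x)$ by $\bar a$, include $A\bar a$ in an $(|A|+|T|)^+$-saturated model $N$, extend $\tp(N/A)$ to an $A$-invariant global type using that $A$ is an invariance base, apply (\ref{extendible-saturation}), and restrict via (\ref{extendible-restriction}). For (\ref{extendibly-extendible}), first take any $A$-invariant extension $\tilde p_0$ of $p \cup q$ furnished by extendibility of $p$; realize $\tilde p_0$ in a larger monster by $(\bar a, b)$, include $A\bar a b$ in a saturated $N$, and set $q'(\bar x, \bar y) = \tp(N/A)$, a completion of $p \res A$. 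Applying extendibility of $p$ once more to $q'$ yields an $A$-invariant global extension $\tilde p$ of $p(\bar x) \cup q'(\bar x, \bar y)$. The crucial observation is that any realization of $\tilde p$ has the same type over $A$ as $N$, so it enumerates a model with the same saturation properties; hence (\ref{extendible-saturation}) applies to $\tilde p$, and restricting via (\ref{extendible-restriction}) to the $(\bar a, b)$-coordinates yields an extendibly $A$-invariant extension of $p \cup q$. The main obstacle here is the contrast with (\ref{if-invariance-base-then-extendible}): one must preserve the given global type $p$ on the $\bar a$-coordinates rather than merely produce some $A$-invariant extension of $p \res A$, and the resolution is precisely to invoke extendibility of $p$ twice—first to obtain $\tilde p_0$ and then to obtain $\tilde p$—so the saturation argument of (\ref{extendible-saturation}) can be applied to $\tilde p$ itself.
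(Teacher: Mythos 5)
Your proposal is correct and follows the same overall decomposition as the paper: prove the formula-level criterion (\ref{extendible-characterization}) first, obtain (\ref{extendible-restriction}) and (\ref{extendible-saturation}) as the two workhorses, and combine them for (\ref{if-invariance-base-then-extendible}) and (\ref{extendibly-extendible}). Parts (\ref{if-extendible-then-invariance-base}), (\ref{extendible-characterization}), and (\ref{extendible-restriction}) match the paper essentially verbatim. For (\ref{extendible-saturation}) you take a genuinely different route: the paper first uses (\ref{extendible-characterization}) to reduce to finite restrictions $s(x)$ of the global extension $r(\xbar)$, then for each $q(x,y) \in S(A)$ extending $s\res A$ invokes $(|A|+|T|)^+$-saturation of the realization $M$ of $p(\xbar)$ to realize the full type $q(b,y)$ inside $M$, and restricts $r$ to the corresponding variables; you instead verify the criterion of (\ref{extendible-characterization}) directly for $r(\xbar)$ by realizing $r$ by a tuple $\bar d$ enumerating a model $N'$ in a larger monster and letting the witness to $\exists y\,\psi(\bar d, y)$ be a coordinate $d_j$, so that the ``diagonal'' type $\tp(\bar d\, d_j/\Ob)$ is the required $A$-invariant witness to non-quasi-forking. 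This is a valid alternative. One small imprecision worth noting: the appeal to saturation at that step is actually superfluous---a single consistent formula over parameters in the elementary substructure $N'$ is realized in $N'$ by elementarity alone, not by saturation---so as written your argument in fact proves (\ref{extendible-saturation}) with a formally weaker hypothesis (any model in place of a saturated one), whereas the paper's realization of a full type $q(b,y)$ genuinely uses the saturation bound. Finally, in (\ref{extendibly-extendible}) the detour through $\tilde p_0$ is unnecessary: it suffices to take any realization of $q$ in a larger monster, place it inside a saturated $N \supseteq A$, set $q'=\tp(N/A)$, and invoke extendibility of $p$ once; the point you identify---that one must keep $p$ fixed on the $\bar x$-coordinates---is already handled by that single application, since extendibility is exactly the ability to amalgamate $p(\bar x)$ with any $q'(\bar x,\bar y)$ over $A$. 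Everything else checks out.
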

\begin{proof}
    \ref{if-extendible-then-invariance-base} is immediate.  For \ref{extendible-characterization}, let $p(x)$ be an $A$-invariant type. Suppose that there is some formula $\varphi(x,b) \in p(x)$ and some $A$-formula $\psi(x,y)$ such that $\varphi(x,b)\wedge \psi(x,y)$ quasi-forks over $A$ and $p(x) \vdash \exists y \psi(x,y)$. Let $q(x,y)$ be a complete type extending $p\res A \cup \{\psi(x,y)\}$. We now have that $p(x)\cup q(x,y)$ has no $A$-invariant global extension, so $p(x)$ is not extendibly $A$-invariant.

  Now assume that $p(x)$ satisfies the condition in \ref{extendible-characterization}. Fix $q(x,y) \in S(A)$ extending $p\res A$. We have that for every $\varphi(x,b) \in p(x)$ and every $\psi(x,y) \in q(x,y)$, $\varphi(x,b) \wedge \psi(x,y)$ does not quasi-fork over $A$. By compactness, this implies that there is an $A$-invariant type $r(x,y)$ containing each of these formulas, which implies that $r(x,y) \supseteq p(x)\cup q(x,y)$.
  
  For \ref{extendible-restriction}, let $q(x,y)$ be a type over $A$ extending $r \res A$. $(p \res A)(x,z) \cup q(x,y)$ is consistent. Let $s(x,y,z)$ be a completion of $(p\res A)(x,z)\cup q(x,y)$. We now have that there is an $A$-invariant type $t(x,y,z)$ extending $p(x,z)\cup s(x,y,z)$. The restriction of $t$ to $xz$ is now the required type.

  For \ref{extendible-saturation}, let $r(\xbar)$ be an $A$-invariant extension of $p(\xbar)$. Note that \ref{extendible-characterization} implies that it is sufficient to check that every restriction of $r(\xbar)$ to finitely many variables is extendibly $A$-invariant. Let $s(x)$ be a restriction to finitely many variables. Let $q(x,y)$ be some type over $A$ extending $s \res A$. Let $M$ be a realization of $p(\xbar)$ and let $b$ be the finite tuple of elements corresponding to the variable $x$. By saturation, there is some $c \in M$ such that $bc \models q(x,y)$. Therefore, the restriction of $p(\xbar)$ to the variables corresponding to $bc$ witnesses that $s(x)\cup q(x,y)$ has an $A$-invariant extension. Since we can do this for any finite tuple of variables, we have that $r(\xbar)$ is extendibly $A$-invariant by \ref{extendible-characterization}.

  \ref{if-invariance-base-then-extendible} and \ref{extendibly-extendible} now follow immediately from \ref{extendible-restriction} and \ref{extendible-saturation}.
\end{proof}

Given \cref{prop:extendible-nice}, we propose that defining Kim-dividing over invariance bases in terms of extendibly invariant types is likely to be a more robust notion than Kim-dividing defined in terms of arbitrary invariant types. Our strongest evidence that this is a good definition (at least in the context of NCTP theories) is the results of the next section, specifically \cref{prop:reliable-main}. As we discuss at the end of \cref{sec:Kim-Reliable}, however, it unclear that this evidence is completely solid. Nevertheless, we find \cref{cor:Kim-fork-Kim-div} fairly compelling.


\subsection{Kim's lemma for reliably invariant types}
\label{sec:Kim-Reliable}

\begin{defn}
  A sequence $(b_i)_{i<n}$ is an \emph{invariant sequence over $A$} if $b_i \equiv_A b_j$ for each $i<j<n$ and $b_i \indi_A b_{<i}$ for each $i<n$.
\end{defn}

\begin{defn}\label{defn:reliable}
  Given a class of $A$-invariant types $\Ic$, an $A$-invariant type $p(x)$ is \emph{reliably in $\Ic$} if it is in the largest class $\Rc \subseteq \Ic$ satisfying that
  \begin{enumerate}
  \item\label{rel-0} for any $p(x,y) \in \Rc$, the restriction $p(x)$ is in $\Rc$,
  \item\label{rel-1} for any $p(x) \in \Rc$ and $q(x,y) \in S(A)$ extending $p \res A$, there is an $r(x,y) \in \Rc$ extending $p(x)\cup q(x,y)$,\footnote{This includes the particular case where $p$ is the unique $0$-type over the monster. In other words, we are implicitly requiring that for any $q(y) \in S(A)$, there is an $r(y) \in \Rc$ extending $q(y)$.} and
  \item\label{rel-2} for any $p(x_0) \in \Rc$ and $q(x_0,\dots,x_{n-1}) \in S(A)$ extending $(p \res A)(x_0)$, if $q(\xbar)$ is the type of an invariant sequence over $A$, then there is an $r(\xbar) \in \Rc$ extending $p(x_0)\cup \dots \cup p(x_{n-1}) \cup q(x_0,\dots,x_{n-1})$.
  \end{enumerate}
  If $\Ic$ is the class of all $A$-invariant types and $p(x)$ is reliably in $\Ic$, then we say that $p(x)$ is \emph{reliably $A$-invariant}. If $\Ic$ is the class of $A$-coheirs and $p(x)$ is reliably in $\Ic$, we say that $p(x)$ is a \emph{reliable $A$-coheir}.
\end{defn}

The class $\Rc$ exists by the Knaster-Tarski theorem (although it could be empty). Note that reliability clearly implies Kim-strictness. Given \cref{cor:reliability-is-reliable} and the fact that there are types over models with no strictly invariant extensions, we know that it does not always imply strictness.

\begin{prop}\label{prop:reliable-main}
  If there is a formula $\varphi(x,y)$, a reliably $A$-invariant type $p(y)$, and an extendibly $A$-invariant type $q(y)$ such that $p\res A = q\res A$ and $\varphi(x,y)$ $q$-Kim-divides but does not $p$-Kim-divide, then $T$ has CTP.
\end{prop}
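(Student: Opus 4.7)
The plan is to adapt the inductive tree construction from the proof of Proposition~\ref{prop:Kim-failure-implies-CTP}, replacing the appeal to bi-invariance of $p$ with careful use of the closure properties of the class $\Rc$ of reliably $A$-invariant types. I aim to show by induction on $n$ that there is a family $(b_\sigma)_{\sigma \in 2^{\leq n}}$ of parameters realizing $p\res A$ satisfying the tree conditions of Proposition~\ref{prop:Kim-failure-implies-CTP}---that for each $\sigma \in 2^{<n}$, $b_\sigma \models q\res A\cup\{b_\eta:\eta\tgess\sigma\}$, and that $b_\tau\models p\res A\cup\{b_\eta:\eta\tgeq\sigma\frown 1\}$ for each $\tau\tgeq\sigma\frown 0$---together with the added inductive hypothesis that the joint $A$-type of $(b_\sigma)_{\sigma\in 2^{\leq n}}$ extends to some $\rho_n(\bar{x})\in\Rc$. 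The same compactness argument as in Proposition~\ref{prop:Kim-failure-implies-CTP} then produces the binary tree witnessing $k$-CTP. The base case is handled by $\rho_0=p$.

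For the inductive step, given $(c_\sigma)_{\sigma\in 2^{\leq n}}$ realizing the tree conditions with joint $A$-type extending to $\rho_n\in\Rc$, I set $b_{0\frown\sigma}=c_\sigma$, making the depth-$n$ tree the left subtree of the new tree. To construct the right subtree as a $p$-independent copy, I apply \ref{rel-2} to $\rho_n$: treating the tuple variable of $\rho_n$ as a single tuple variable, I take $q'(\bar{x}^0,\bar{x}^1)\in S(A)$ to be the $A$-type of a pair of tuples $(\bar{b}^0,\bar{b}^1)$ where $\bar{b}^0\models\rho_n$ and $\bar{b}^1\models\rho_n\res A\bar{b}^0$. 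This is an invariant sequence of tuples over $A$, so \ref{rel-2} yields $r(\bar{x}^0,\bar{x}^1)\in\Rc$ in which both tuples realize $\rho_n$ globally. A realization of $r$ provides $(b_{\tgeq 0},b_{\tgeq 1})$. I then add the root $b_\varnothing\models q\res A b_{\tgeq 0}b_{\tgeq 1}$ using $A$-invariance of $q$, and extend the joint type to $\rho_{n+1}\in\Rc$ via \ref{rel-1} with the complete $A$-type of the resulting depth-$(n+1)$ tree.

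The main obstacle is verifying the cross-$p$-independence condition: for each $\sigma\tgeq 0$, that $b_\sigma\models p\res A b_{\tgeq 1}$. For leaves of the left subtree this is automatic, since iterated use of \ref{rel-2} throughout the construction arranges that leaf-components of $\rho_n$ realize $p$ globally, so $p\res A b_{\tgeq 1}$ follows immediately. For internal nodes of the left subtree---which realize a mixed $p$/$q$ type rather than $p$ globally---the property must be deduced from the $A$-invariance of $r$ together with the right-comb structure already encoded in $\rho_n$, using reliability of $r$ to transport the internal right-comb $p$-independence of $\rho_n$ across the two tuples. In the bi-invariant case this step is essentially free from the defining property of bi-invariance; here it is the central technical point, and carrying it out precisely---likely by inducting on the levels of the tree and exploiting the interaction between \ref{rel-1}, \ref{rel-2}, and the $A$-invariance of the joint type---is what justifies the passage from reliably invariant $p$ to the existence of CTP.
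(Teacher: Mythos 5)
You correctly identify the obstacle, but the gap is genuine and your suggested repair does not close it. When you apply \ref{rel-2} to $\rho_n$ you obtain $r\in\Rc$ extending $\rho_n(\bar{x}^0)\cup\rho_n(\bar{x}^1)\cup q'$, so a realization $(\bar{b}^0,\bar{b}^1)$ of $r$ has each $\bar{b}^i$ realizing $\rho_n$ globally. For $\tau\tgeq 0$ this gives $b_\tau\models\rho_n^\tau\res A\bar{b}^1$, where $\rho_n^\tau$ is the coordinate projection of $\rho_n$. For a leaf $\tau$ you can indeed arrange $\rho_n^\tau=p$; but for an internal $\tau$ the projection $\rho_n^\tau$ is whatever $A$-invariant type the application of \ref{rel-1} handed you when packaging the tree into $\rho_n$---it agrees with $p$ over $A$ but not globally, and nothing in the definition of $\Rc$ forces internal projections to equal $p$. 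Since right-combs in $2^{\leq n}$ can contain internal nodes (already $\{\langle 0\rangle,\langle 1\rangle\}$ for $n\geq 2$), the required condition $b_\tau\models p\res Ab_{\tgeq 1}$ for every $\tau\tgeq 0$ genuinely fails to follow. The transport you sketch is modeled on the bi-invariance argument of \cref{prop:Kim-failure-implies-CTP}, but there the mechanism is different: one first finds a fresh $d\models p\res Ab_{\tgeq 0}$, takes an $A$-invariant $r\supseteq\tp(b_{\tgeq 0}/Ad)$, and uses $b_\sigma\equiv_Ad$ together with $A$-invariance of $r$ to rotate $d$ into $b_{\tgeq 1}$. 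In your setup there is no $d$; your $r$ relates $\bar{b}^0$ and $\bar{b}^1$ only as realizations of $\rho_n$, and $p$ itself never enters the relation between the two tuples.

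The paper's actual proof avoids this by never building the binary tree directly. It constructs cubes $(b_\sigma)_{\sigma\in 3^n}$ in which the $0$- and $1$-slices at each stage are a $p$-correlated pair and the $2$-direction carries the $q$-realizations needed for the paths (which is where extendibility of $q$ is used---your proposal never invokes it, another symptom that something is off, although the paper does leave open whether extendibility is strictly necessary). The joint $A$-type of the cube, not of the tree, is what is tracked as a reliably $A$-invariant type $p_n$ via \ref{rel-1}, and the CTP tree is then extracted by a coordinate-reversal map $f\colon 2^{<n}\to 3^n$ sending tree nodes to cube indices with trailing $2$'s. The cross-$p$-independence for internal tree nodes is read off from the $0/1$-slice structure of the cube rather than from any coordinate projection of the tree's own joint type. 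Your inductive invariant (the tree's joint $A$-type extends to some $\rho_n\in\Rc$) is simply too weak to carry the induction; the cube is a genuinely needed intermediate object, not a notational convenience.
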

\begin{proof}
  Suppose that $\varphi(x,y)$ is $k$-inconsistent on Morley sequences generated by $q$. For each $n<\omega$, we will build a certain configuration $(b_\sigma)_{\sigma \in 3^n}$ from which we can extract a $k$-CTP tree of height $n$. By compactness, we'll have that $T$ has $k$-CTP.

  We will build the arrays $(b_\sigma)_{\sigma \in 3^n}$ inductively. For $n=1$, let $b_0$ be any realization of $p\res A$. Then let $b_1 \models p \res Ab_0$ and $b_2 \models q \res Ab_0b_1$. Note that $b_0b_1b_2$ is an invariant sequence over $A$. Let $p_1(\xbar_1) = p(x)$. 

  Now suppose we are given the array $(b_\sigma)_{\sigma \in 3^{n+1}}$ and a type $p_{n+1}(\ybar_{n+1})$ satisfying the following induction hypotheses:
  \begin{itemize}
  \item The $3$-element sequence $\langle (b_{0\frown \sigma})_{\sigma \in 3^n},(b_{1\frown \sigma})_{\sigma \in 3^n},(b_{2\frown \sigma})_{\sigma \in 3^n} \rangle$ is invariant over $A$.
  \item $p_{n+1}(\ybar_{n+1})$ is reliably $A$-invariant.
  \item $(b_{1\frown \sigma})_{\sigma \in 3^n} \models p_{n+1} \res  A \cup(b_{0\frown \sigma})_{\sigma \in 3^n}$.
  \item $b_{\langle 2 \rangle^{n+1}}\models q \res A \cup(b_{i\frown\sigma})_{i<2,\sigma\in 3^n}$ (where $\langle 2 \rangle^{n+1}$ is the sequence of length  $n+1$  consisting entirely of $2$'s).
  \end{itemize}
  \begin{sloppypar}
    Let $c_{0\frown \sigma} = b_\sigma$ for each $\sigma \in 3^{n+1}$. Since $p_{n+1}(\ybar_{n+1})$ is reliably $A$-invariant and since $(c_{0\frown \sigma})_{\sigma \in 3^{n+1}} \models p_{n+1}\res A$, we can find a reliably $A$-invariant type $p_{n+2}(\ybar_{n+2})$ extending $p_{n+1}(\ybar_{n+1}) \cup \tp((c_{0\frown \sigma})_{\sigma \in 3^{n+1}}/A)$. Let $(c_{1\frown \sigma})_{\sigma \in 3^{n+1}}\models p_{n+1}\res A \cup (c_{0 \frown \sigma})_{\sigma \in 3^{n+1}}$. Then find $(c_{2 \frown \sigma})_{\sigma \in 3^{n+1}}$ such that $(c_{2\frown \sigma})_{\sigma \in 3^{n+1}}\ind^i_A (c_{i \frown \sigma})_{i<2,\sigma \in 3^{n+1}}$ and $c_{\langle 2 \rangle^{n+1}}\models q \res A\cup (c_{i \frown \sigma})_{i<2,\sigma \in 3^{n+1}}$ (which we can always do, since $q$ is extendibly $A$-invariant). Finally, let $(b_\sigma)_{\sigma \in 3^{n+2}}$ be $(c_\sigma)_{\sigma \in 3^{n+2}}$. 
  \end{sloppypar}
 By induction we can perform this procedure for as many steps as we like. Fix some $n$ and consider the cube $(b_\sigma)_{\sigma \in 3^n}$. Let $f$ be the map from $2^{< n}$ into $3^n$ taking $\tau \in 2^{<n}$ to the unique $\sigma \in 3^n$ satisfying that $\sigma(i)=0$ if and only if $\tau(i) = 1$, $\sigma(i)=1$ if and only if $\tau(i) = 0$, and $\sigma(i)=2$ if and only if $\tau(i)$ is not defined. Note that for any $\tau \in 2^{<n}$, $f(\tau)$ only has $2$'s at its end and either has a final segment of $2$'s or has no $2$'s at all.

 We would like to argue that $(b_{f(\tau)})_{\tau \in 2^{<n}}$ is a $k$-CTP tree of height $n$ (i.e., satisfies the definition of $k$-CTP except for the requirement that the tree have infinite height).

 \vspace{1em}

 \begin{claim}
   For any $\tau \in 2^{<n}$, $b_{f(\tau)} \models q \res A\cup (b_{f(\eta)})_{\eta \tgess \tau}$.
 \end{claim}
 \begin{claimproof}
 Let $\gamma$ be the longest initial segment of $f(\tau)$ not containing any $2$'s. Let $m$ be the number of $2$'s in $f(\tau)$. It follows by one of the induction hypotheses in the construction of the cube that $b_{f(\tau)}\models q \res A\cup (b_{\gamma\frown i \frown \e})_{i<2,\e \in 3^{m-1}}$. The family $(b_{\gamma\frown i \frown \e})_{i<2,\e \in 3^{m-1}}$ contains $b_{f(\eta)}$ for any $\eta \in 2^{<n}$ with $\eta \tgess \tau$, so the claim follows.
 \end{claimproof}

 \vspace{1em}

 So, in particular, we have that for any path $\eta \in 2^n$, $\{\varphi(x,b_{f(\eta \res i)}): i< n\}$ is $k$-inconsistent.

 \vspace{1em}

 \begin{claim}
   For any $\tau \in 2^{< n}$ and $\eta \tgeq \tau \frown 0$, $b_{f(\eta)} \models p \res A \cup (b_{f(\gamma)})_{\gamma \tgeq \tau \frown 1}$. 
 \end{claim}
 \begin{claimproof}
   Let $\delta$ be the longest initial segment of $f(\tau)$ not containing any $2$'s. We then have that $f(\gamma)$ has $\delta \frown 0$ as an initial segment for any $\gamma \tgeq \tau\frown 1$ (since $f$ switches $0$ and $1$). Let $m$ be the number of $2$'s in $f(\tau)$. By the construction of the cube, we have that for any $\e$ with initial segment $\delta \frown 1$, $b_\e \models p \res A \cup (b_{\delta\frown 0 \frown \xi})_{\xi \in 3^{m-1}}$. The family $(b_{\delta\frown 0 \frown \xi})_{\xi \in 3^{m-1}}$ includes $b_{f(\gamma)}$ for any $\gamma \tgeq \tau \frown 1$, so the claim follows.
 \end{claimproof}

 \vspace{1em}

 This last claim implies that if $X \subseteq 2^{\leq n}$ is a right-comb, then it is a Morley sequence generated by $p(y)$ when enumerated in decreasing lexicographic order. In particular, $\{\varphi(x,b_{f(\tau)}):\tau \in X\}$ is consistent for any right-comb $X$.

 Therefore, by compactness, $T$ has $k$-CTP.\footnote{We could also just build the $k$-CTP tree directly with part of a cube of the form $3^\omega$ (taken as a direct limit of the cubes $3^n$), but this is a bit more annoying to actually write out.}
\end{proof}

One notable thing about the above proof is that, despite the advocacy for the concept of extendibly invariant types given in \cref{sec:what-should}, extendibility is playing a relatively minor role in the proof. We never use the first or second property in \cref{defn:reliable} for the type $p(y)$ and extendibility for $q(y)$ seems to barely matter in that we throw away all of the `siblings' of the realization of $q$ that we build at each step. This raises the question of whether the restriction to extendibly invariant types is necessary. 

\begin{quest}
  Say that an $A$-invariant type is \emph{semi-reliably $A$-invariant} if it satisfies \cref{defn:reliable} with the first and second conditions removed (and with $\Ic$ the class of all $A$-invariant types). Is it true that if there is a set of parameters $A$, a formula $\varphi(x,y)$, a semi-reliably $A$-invariant type $p(y)$, and an $A$-invariant type $q(y)$ such that $p\res A = q \res A$ and $\varphi(x,y)$ $q$-Kim-divides but does not $p$-Kim-divide, then $T$ has CTP?
\end{quest}

Another interesting and frustrating thing is that, while reliably $A$-invariant types always exist over extension bases (as we will show in the next section), the construction of reliably invariant types in \cref{cor:reliability-is-reliable} and the construction of bi-invariant types in \cref{prop:countable-characterization-CTP} seem to be rather incompatible. \cref{cor:reliability-is-reliable} relies heavily on knowing the precise type over the set $A$, whereas \cref{prop:countable-characterization-CTP} is a forcing construction that explicitly needs to avoid committing to a specific complete type before the end. It's not even clear that the bi-invariant types constructed in \cref{prop:countable-characterization-CTP} are \emph{extendibly bi-invariant} (i.e., have the property that for any type $q(x,y)$ over $A$ extending $p\res A$, there is an $A$-bi-invariant type $r(x,y)$ extending $p(x) \cup q(x,y)$). 

\begin{quest}
  Does the converse of \cref{prop:reliable-main} hold? Moreover, if $T$ has CTP, does there exist a reliable heir-coheir witnessing this? A reliably bi-invariant type? Extendibly bi-invariant?
\end{quest}

It's far from clear that the definition we've given in \cref{defn:reliable} is in some sense the `correct' one. Its motivation is primarily that it is the strongest property that we can guarantee over invariance bases. In particular, while coheirs are always extendibly invariant, it is not clear whether heir-coheirs are always reliably invariant. If this were true we would be able to combine \cref{thm:CTP-char} and \cref{prop:reliable-main} and give a statement of Kim's lemma that both characterizes NCTP and is non-trivial over arbitrary extension bases (and therefore over arbitrary models).

\begin{quest}
  Are all heir-coheirs extendible heir-coheirs? In other words, if $p(x)$ is an $M$-heir-coheir and $q(x,y)$ is a type over $M$ extending $p \res M$, is there an heir-coheir extending $p(x) \cup q(x,y)$?
\end{quest}

\begin{quest}
  Are all heir-coheirs reliable coheirs? Reliably invariant?
\end{quest}


\subsection{Existence of reliably invariant types}%
\label{sec:exist-reli-invar}

Our approach to building reliably invariant types is to build invariant types that are `as bi-invariant as possible.' In order to manage our bookkeeping, we will use a fixed tuples of variables indexed by the monster model.

\begin{defn}
  Let $\Ob$ be a fixed monster model of $T$. Let $\Xb = (x_a)_{a \in \Ob}$ be a fixed enumeration of distinct variables indexed by $\Ob$. Given a formula $\varphi(\Xb)$ (with parameters in the monster) and $\sigma \in \Aut(\Ob)$, we write $\sigma^{\varr}\cdot \varphi$ to represent the formula $\varphi(\Xb)$ with its variables permuted by $\sigma$ in the obvious way (but not its parameters). We write $\sigma^{\parr}\cdot\varphi$ to represent the formula $\varphi(\Xb)$ with its parameters permuted by $\sigma$ (but not its variables). We will use the same notation for partial types to indicate the image under the corresponding map.

  The \emph{minimal monster type (over $M$)}, $\Theta_M(\Xb)$, is $\tp(\Ob/M)$ with the obvious variable assignment. %
  A \emph{monster type (over $M$)} is any consistent type extending $\Theta_M(\Xb)$.

  A partial type $\Sigma(\Xb)$ is \emph{$A$-invariant} if for any $\sigma \in \Aut(\Ob/A)$, $\sigma^{\parr}\cdot \Sigma = \Sigma$. $\Sigma(\Xb)$ is \emph{$A$-co-invariant} if for any $\sigma \in \Aut(\Ob/A)$, $\sigma^{\varr}\cdot \Sigma= \Sigma$.%
\end{defn}

Note in particular that the minimal monster type over $M$ is $M$-invariant (trivially) and $M$-co-invariant. Also note that (the deductive closure of) the union of any two $A$-invariant types is $A$-invariant and of any two $A$-co-invariant types is $A$-co-invariant.

When $\abar$ is a tuple of elements of the monster, we may write $x_{\abar}$ to represent the corresponding tuple of variables in $\Xb$.

\begin{lem}\label{lem:parallel-transport}
  Fix a set $A$. Let $\Xi(\Xb)$ be a maximal\/\footnote{Whenever we talk about maximal partial types among some class, we mean maximal consistent partial types among that class.} $A$-co-invariant partial type. For any tuple $\abar \in \Ob$ and any formula $\varphi(x_{\abar})$, if $\varphi(x_{\abar})$ is consistent with $\Xi(\Xb)$, then there are $\abar_0,\dots,\abar_{n-1} \equiv_A \abar$ such that $\Xi(\Xb) \vdash \bigvee_{i<n}\varphi(x_{\abar_i})$.
\end{lem}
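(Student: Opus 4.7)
The plan is to run a standard maximality-plus-compactness argument, where the role of the ``forbidden'' co-invariant extension is played by the family of negated conjugates of $\varphi(x_{\abar})$ under the action on variables.

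First I would form the partial type
\[
  \Xi'(\Xb) \;\coloneqq\; \Xi(\Xb) \cup \bigl\{\neg\varphi(x_{\abar'}) : \abar' \equiv_A \abar\bigr\}.
\]
The point is that $\Xi'$ is $A$-co-invariant: any $\sigma \in \Aut(\Ob/A)$ fixes $\Xi$ by hypothesis, and permutes the indexing tuples $\abar'$ within the single $A$-orbit of $\abar$, hence sends the second set to itself. So $\sigma^{\varr}\cdot\Xi' = \Xi'$.

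Next I would argue that $\Xi'$ must be inconsistent. Indeed, if it were consistent then maximality of $\Xi$ among $A$-co-invariant partial types would force $\Xi' = \Xi$. In particular $\neg\varphi(x_{\abar}) \in \Xi$ (taking $\abar' = \abar$), contradicting the assumption that $\varphi(x_{\abar})$ is consistent with $\Xi$.

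Finally, since $\Xi'$ is inconsistent, compactness yields finitely many $\abar_0,\dots,\abar_{n-1} \equiv_A \abar$ such that $\Xi(\Xb) \cup \{\neg\varphi(x_{\abar_i}) : i < n\}$ is inconsistent, i.e.\ $\Xi(\Xb) \vdash \bigvee_{i<n}\varphi(x_{\abar_i})$, as required. There is no real obstacle here: the only thing to check carefully is that the action of $\Aut(\Ob/A)$ on variables genuinely stabilizes the set $\{\neg\varphi(x_{\abar'}) : \abar' \equiv_A \abar\}$, which is immediate from the definition of $\sigma^{\varr}$ and the fact that $\Aut(\Ob/A)$ acts transitively on each $A$-type.
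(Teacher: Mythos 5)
Your proof is correct and is essentially the paper's argument, just run in the direct rather than the contrapositive direction: you build $\Xi'$, show it is $A$-co-invariant, use maximality to conclude it is inconsistent, and then invoke compactness; the paper assumes no finite disjunction works, concludes (by compactness) that the same set $\Xi'$ is consistent and $A$-co-invariant, and derives a contradiction from maximality. The one small imprecision is your claim that consistency of $\Xi'$ would ``force $\Xi' = \Xi$''; maximality only gives that $\Xi'$ is not a proper extension, i.e.\ $\Xi \vdash \Xi'$, but that already yields $\Xi \vdash \neg\varphi(x_{\abar})$, which is all that is needed.
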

\begin{proof}
  Suppose that no such $\abar_0,\dots,\abar_{n-1}$ exist. Then we'd have that $\{\Xi(\Xb)\cup\{\neg \varphi(x_{\abar'}) : \abar' \equiv_A \abar\}$ is consistent and $A$-co-invariant, but then by maximality, we'd have that $\Xi(\Xb)\vdash \neg \varphi(x_{\abar})$, contradicting the fact that $\varphi(x_{\abar})$ is consistent with $\Xi(\Xb)$.
\end{proof}

\begin{lem}\label{lem:max-does-not-fracture}
  Let $\Xi(\Xb)$ be a maximal $A$-co-invariant partial type, and let $\abar_0,\dots,\allowbreak\abar_{n-1}$ be an invariant sequence over $A$.
  \begin{enumerate}
  \item If a formula $\varphi(x_{\abar_0})$ is consistent with $\Xi(\Xb)$, then $\bigwedge_{i<n}\varphi(x_{\abar_i})$ is consistent with $\Xi(\Xb)$.
  \item If a global type $p(x_{\abar_0})$ is consistent with $\Xi(\Xb)$, then $\bigwedge_{i<n}p(x_{\abar_i})$ is consistent with $\Xi(\Xb)$.
  \end{enumerate}
\end{lem}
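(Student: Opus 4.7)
The plan is to prove (1) by induction on $n$ and then deduce (2) from (1) by compactness. The base case $n=1$ is immediate. For the inductive step I would first apply the inductive hypothesis to the truncated sequence $\abar_0, \dots, \abar_{n-2}$, which is again an invariant sequence over $A$, to conclude that $\psi(x_{\abar_{<n-1}}) := \bigwedge_{i<n-1} \varphi(x_{\abar_i})$ is consistent with $\Xi$.

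Next, I would invoke \cref{lem:parallel-transport} on $\psi$, treating the concatenation $\abar_0 \cdots \abar_{n-2}$ as a single tuple in $\Ob$, to obtain $A$-conjugates $\cbar^{(0)}, \dots, \cbar^{(m-1)}$ of that tuple such that $\Xi \vdash \bigvee_{j<m} \bigwedge_{i<n-1} \varphi(x_{\cbar^{(j)}_i})$. Then, since $\abar_{n-1} \indi_A \abar_{<n-1}$, I would choose a global $A$-invariant type $q$ extending $\tp(\abar_{n-1}/A\abar_{<n-1})$ and realize $q$ over $A\abar_{<n-1}\cbar^{(0)}\cdots\cbar^{(m-1)}$ by a tuple $\bar e$. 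The crucial point is that because $q$ is $A$-invariant, any $A$-automorphism $\sigma_j$ carrying $\abar_{<n-1}$ to $\cbar^{(j)}$ fixes $q$, so $q \res A\cbar^{(j)} = \sigma_j(\tp(\abar_{n-1}/A\abar_{<n-1}))$, and therefore $\bar e\,\cbar^{(j)} \equiv_A \abar_{n-1}\abar_{<n-1}$ simultaneously for every $j < m$.

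The closing move is a contradiction argument. Suppose $\bigwedge_{i<n} \varphi(x_{\abar_i})$ is inconsistent with $\Xi$. By $A$-co-invariance of $\Xi$ applied to automorphisms sending $\abar_{<n-1}\abar_{n-1}$ to $\cbar^{(j)}\bar e$, the formula $\bigwedge_{i<n-1} \varphi(x_{\cbar^{(j)}_i}) \wedge \varphi(x_{\bar e})$ is inconsistent with $\Xi$ for each $j$, hence $\Xi \cup \{\bigwedge_{i<n-1} \varphi(x_{\cbar^{(j)}_i})\} \vdash \neg\varphi(x_{\bar e})$ for each $j$. Combining with $\Xi \vdash \bigvee_j \bigwedge_{i<n-1}\varphi(x_{\cbar^{(j)}_i})$ yields $\Xi \vdash \neg\varphi(x_{\bar e})$; since $\bar e \equiv_A \abar_0$, one further application of $A$-co-invariance gives $\Xi \vdash \neg\varphi(x_{\abar_0})$, contradicting the hypothesis that $\varphi(x_{\abar_0})$ is consistent with $\Xi$.

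For (2), I would argue by compactness: any finite fragment of $\bigcup_{i<n} p(x_{\abar_i})$ is implied by $\{\chi(x_{\abar_i}) : i<n\}$ for a single $\chi \in p$ (the conjunction of the formulas used), and $\chi(x_{\abar_0})$ is consistent with $\Xi$ because $p(x_{\abar_0})$ is; part (1) applied to $\chi$ then delivers the needed consistency. The step I expect to be the main obstacle is the \emph{simultaneity} claim in the inductive step --- producing a single $\bar e$ that plays the role of $\abar_{n-1}$ over \emph{every} $\cbar^{(j)}$ at once so that $\bar e\,\cbar^{(j)} \equiv_A \abar_{<n}$ --- because this is the point at which one must promote the mere existence of an invariant extension of $\tp(\abar_{n-1}/A\abar_{<n-1})$ to the $A$-invariance of a \emph{single} global type $q$.
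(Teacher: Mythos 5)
Your proof is correct, but it takes a genuinely different route from the paper's. The paper does the whole construction in one shot: a single application of \cref{lem:parallel-transport} to $\varphi(x_{\abar_0})$ produces conjugates $\bbar_0,\dots,\bbar_{k-1}$ of $\abar_0$ with $\Xi(\Xb) \vdash \bigvee_{j<k}\varphi(x_{\bbar_j})$, and the invariant types witnessing $\abar_i \indi_A \abar_{<i}$ are then used to build a $k$-ary forest of depth $n$ whose siblings are $A$-conjugate to $\bbar_0,\dots,\bbar_{k-1}$ at every level and whose paths are $A$-conjugate to $\abar_0,\dots,\abar_{n-1}$. Since $\Xi(\Xb)$ forces the disjunction $\bigvee_{j<k}\varphi(x_{\dbar_{\eta\frown j}})$ at every node of the forest, any completion of $\Xi(\Xb)$ makes $\varphi$ hold along some full path, and $A$-co-invariance transports the resulting consistency back to $\abar_0,\dots,\abar_{n-1}$. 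Your argument instead inducts on $n$, re-applying \cref{lem:parallel-transport} at each step to the conjunction accumulated so far and closing with a contradiction; the number $m$ of conjugates can therefore change from step to step, whereas the paper fixes $k$ once. Both work, and yours is arguably a bit more modular. One small note: the `simultaneity' concern you flag at the end is not actually an obstacle, since $\abar_{n-1}\indi_A\abar_{<n-1}$ means \emph{by definition} that $\tp(\abar_{n-1}/A\abar_{<n-1})$ extends to a single global $A$-invariant type, which is exactly the $q$ your construction requires---there is nothing to promote.
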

\begin{proof}
  For 1, since $\varphi(x_{\abar_0})$ is consistent with $\Xi(\Xb)$, we have by \cref{lem:parallel-transport} that there are $\bbar_0,\dots,\bbar_{k-1}$ such that $\Xi(\Xb)\vdash \bigvee_{j<k}\varphi(\bbar_i)$ and $\bbar_j \equiv_A \abar_0$ for each $j<k$.

  For each positive $i<n$, let $p(\xbar)$ be an $M$-invariant type such that $\abar_i\models p \res A \abar_{<i}$.

  By induction, we can build a forest $(\dbar_\eta)_{\eta \in k^{\leq n}\setminus \{\varnothing\}}$ such that
  \begin{itemize}
  \item for each $\eta \in k^{< n}$, $\dbar_{\eta \frown 0}\dbar_{\eta \frown 1}\dots \dbar_{\eta \frown (n-1)} \equiv_M \cbar_0\cbar_1 \dots \cbar_{n-1}$ and
  \item for each $\eta \in k^{< n}\setminus \{\varnothing\}$, $\dbar_\eta \models p_{n-\ell(\eta)} \res \dbar_{\tgess \eta}$ (where $\ell(\eta)$ is the length of $\eta$).
  \end{itemize}
  Let $q(\Xb)$ be a completion of $\Xi(\Xb)$. Since $\Xi(\Xb)\vdash \bigvee_{j<k}\varphi(x_{\dbar_{\eta\frown j}})$ for each $\eta \in k^{<n}$, we can find a path $\beta \in k^n$ such that $q(\Xb)\vdash \bigwedge_{0<i\leq n}\varphi(x_{\dbar_{\beta \res i}})$. This implies that $\bigwedge_{0<i\leq n}\varphi(x_{\dbar_{\beta \res i}})$ is consistent with $\Xi(\Xb)$, so by $A$-co-invariance, $\bigwedge_{i<n}\varphi(x_{\abar_i})$ is consistent with $\Xi(\Xb)$ as well.

  2 follows by compactness.
\end{proof}

The following proposition says that we can rely on the existence of reliable types.
\begin{thm}\label{thm:I-reliable-existence}
  Fix a set of parameters $A$ and let $\Ic$ be a class of $A$-invariant types. Suppose furthermore that
  \begin{enumerate}
  \item\label{I-extension} every type over $A$ extends to a member of $\Ic$,
  \item\label{I-closure} for each tuple of variables $\xbar$, $S_{\xbar}(\Ob)\cap \Ic$ is closed, 
  \item\label{I-permutation} $\Ic$ is invariant under renaming variables, and
  \item\label{I-restriction} any $A$-invariant type $p(\xbar)$ is in $\Ic$ if and only if every restriction of it to finitely many variables is in $\Ic$.
  \end{enumerate}
  Then for every $q(\xbar) \in S(A)$, there is an $A$-invariant $p(\xbar)\supset q(\xbar)$ that is reliably in $\Ic$.
\end{thm}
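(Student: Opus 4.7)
The plan is to exploit the monster-variable framework developed in \cref{lem:parallel-transport} and \cref{lem:max-does-not-fracture} to build, via Zorn's lemma, a single master partial type $\Xi^*(\Xb)$ and then extract a class $\Rc$ of its variable-tuple restrictions that witnesses reliability.

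Let $\mathcal{F}$ be the collection of $A$-co-invariant partial types $\Xi(\Xb)$ extending $\Theta_A(\Xb) \cup q(x_{\bar a_0})$, for a fixed $\bar a_0 \in \Ob$ with $\bar a_0 \models q$, such that for every finite tuple $\bar b \in \Ob$, the restriction $\Xi \res x_{\bar b}$ is contained in some complete $A$-invariant type in $\Ic$ on the variables of $\bar b$. First I would check $\mathcal{F}$ is non-empty: apply \ref{I-extension} to $\Theta_A(\Xb)$ itself (viewed as a type over $A$) to obtain some $P \in \Ic$ extending it with $q(x_{\bar a_0}) \subseteq P$, then use an appropriate $A$-co-invariant symmetrization of $P$; consistency follows from $A$-invariance of $P$ together with \ref{I-permutation}, which ensures each $\sigma^{\varr} P$ is again in $\Ic$. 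I would then apply Zorn's lemma, using \ref{I-closure} to show closure under chain unions: for an increasing chain $\{\Xi_i\}$, the families of $\Ic$-extensions witnessing each $\Xi_i \res x_{\bar b}$ form a descending family of closed non-empty subsets of $S_{x_{\bar b}}(\Ob) \cap \Ic$, hence have non-empty intersection by compactness. Let $\Xi^*$ be a maximal element.

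By maximality, each finite-variable restriction $\Xi^* \res x_{\bar b}$ is in fact a complete type in $\Ic$: any missing formula present in some $\Ic$-extension can be added while preserving $\mathcal{F}$-membership, since its $A$-co-invariant closure consists only of the renamed copies $\{\varphi(x_{\bar b'}) : \bar b' \equiv_A \bar b\}$ which lie in the correspondingly renamed $\Ic$-extensions by \ref{I-permutation}. Then \ref{I-restriction} lifts this to arbitrary tuples. Now define $\Rc = \{\Xi^* \res x_{\bar b} : \bar b \text{ a tuple from } \Ob\}$, identified up to variable renaming. I verify the reliability conditions. Condition \ref{rel-0} is immediate by iterated restriction. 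For \ref{rel-1}, given $p = \Xi^* \res x_{\bar a}$ and $q'(\bar x, \bar y) \in S(A)$ extending $p \res A$, pick $\bar b \in \Ob$ with $\bar a\bar b \models q'$ and take $r = \Xi^* \res x_{\bar a \bar b}$; since $\Theta_A \subseteq \Xi^*$ codes $\tp(\bar a \bar b/A) = q'$ and $\Xi^* \res x_{\bar a} \subseteq r$, we get $r \supseteq p \cup q'$. For \ref{rel-2}, the key observation is that $A$-co-invariance of $\Xi^*$ forces $\Xi^* \res x_{\bar a_i}$ to coincide with $\Xi^* \res x_{\bar a_0}$ up to variable renaming whenever $\bar a_i \equiv_A \bar a_0$: pick $\sigma \in \Aut(\Ob/A)$ with $\sigma(\bar a_0) = \bar a_i$; since $\sigma^{\varr}$ fixes parameters, $\sigma^{\varr} \Xi^* = \Xi^*$ transports $\Xi^* \res x_{\bar a_0}$ onto $\Xi^* \res x_{\bar a_i}$. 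So realizing $q'$ as an invariant sequence $\bar a_0, \ldots, \bar a_{n-1} \in \Ob$ and setting $r = \Xi^* \res x_{\bar a_0 \ldots \bar a_{n-1}}$ gives the required $r \supseteq \bigcup_i p(x_{\bar a_i}) \cup q'$. In particular, $p_0 := \Xi^* \res x_{\bar a_0}$ is reliably in $\Ic$ and extends $q$.

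I expect the main obstacle to be the maximality step: while single-variable completeness is relatively clean, adding a formula $\varphi(x_{\bar b})$ and taking the $A$-co-invariant closure propagates constraints across the entire $A$-orbit of $\bar b$, and I must verify that multi-variable restrictions $\Xi^* \res x_{\bar b \bar c}$ for cross-orbit $\bar c$ remain consistent with some $\Ic$-extension. Here \cref{lem:max-does-not-fracture}, applied to invariant sequences transporting realizations of the target $\Ic$-extensions across $A$-orbits, should furnish the coherence needed, with \cref{lem:parallel-transport} handling the disjunctive passage between a consistent formula and its orbit-copies.
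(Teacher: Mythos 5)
The approach shares the monster-variable and co-invariant-maximization framework with the paper, but there is a genuine gap at the pivotal step: the claim that a maximal element $\Xi^*$ of $\mathcal{F}$ is \emph{complete} on each finite-variable restriction $x_{\bar b}$. You acknowledge the obstacle and suggest it is handled by \cref{lem:max-does-not-fracture} and \cref{lem:parallel-transport}, but neither lemma does what you need. \cref{lem:parallel-transport} only produces a \emph{disjunction} $\Xi^* \vdash \bigvee_i \varphi(x_{\bar b_i})$ over the $A$-orbit, not membership of $\varphi(x_{\bar b})$ itself. And \cref{lem:max-does-not-fracture} propagates consistency only along \emph{invariant sequences} $\bar a_0, \ldots, \bar a_{n-1}$, i.e., sequences with $\bar a_i \indi_A \bar a_{<i}$. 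But the co-invariant closure of $\Xi^* \cup \{\varphi(x_{\bar b})\}$ introduces $\varphi(x_{\bar b'})$ for \emph{every} $\bar b' \equiv_A \bar b$, including overlapping or algebraically interdependent tuples that do not form an invariant sequence — most pointedly permutations $\bar b' = \tau(\bar b)$ of $\bar b$ itself when $\tau(\bar b) \equiv_A \bar b$. In that case co-invariance forces $\Xi^* \res x_{\bar b}$ to be invariant under the corresponding variable permutation, and a complete $A$-invariant type with that symmetry need not exist, so maximality can halt before $\Xi^*$ decides every formula on $x_{\bar b}$. Maximality in fact only guarantees (via \cref{lem:parallel-transport}) that $\Xi^*$ asserts a disjunction over orbit copies and the dual disjunction of negations; these are mutually consistent and leave $\varphi(x_{\bar b})$ undecided.

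The paper sidesteps this precisely by \emph{not} requiring $\Xi$ to be complete: it defines the witnessing class $\Sc$ as the set of $A$-invariant types all of whose finite restrictions are merely \emph{consistent with} $\Xi$ (the condition $(\ast)$), rather than contained in $\Xi$. This weaker condition is exactly what \cref{lem:max-does-not-fracture} can propagate along invariant sequences, which is what closure property \ref{rel-2} of \cref{defn:reliable} demands, and what \cref{rel-1} inherits from the fact that any \emph{completion} of $\Xi$ — unlike $\Xi$ itself — is necessarily $A$-invariant (because $[\Sigma] = F \subseteq \Ic$). If you replace your $\Rc = \{\Xi^* \res x_{\bar b}\}$ with the class of $A$-invariant types whose finite restrictions are consistent with $\Xi^*$, and recheck the closure conditions using consistency rather than membership, the argument can be repaired and aligns with the paper's proof. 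As written, though, the completeness claim is the load-bearing step and it does not follow from maximality alone.
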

\begin{proof}
  Let $F \subseteq S_{\Xb}(\Ob)$ be the set of extensions of $\Theta_A(\Xb)$ that are in $\Ic$. Note that since $F$ is the intersection of two closed sets, it is closed. Let $\Sigma(\Xb)$ be the global partial type corresponding to the closed set $F$.

  \vspace{1em}

  \begin{claim}
    $\Sigma(\Xb)$ is $A$-co-invariant.
  \end{claim}
  \begin{claimproof}
    Fix $\sigma \in \Aut(\Ob/A)$. By \ref{I-permutation} and \ref{I-restriction}, $\Ic \cap S_{\Xb}(\Ob)$ is fixed by the action of $\sigma$ on the variables $\Xb$. Since the set of global completions of $\Theta_A(\Xb)$ is as well, this implies that $F$ and therefore $\Sigma(\Xb)$ is fixed under the action of $\sigma$ on $\Xb$. Since we can do this for any $\sigma$, $\Sigma(\Xb)$ is $A$-co-invariant.
  \end{claimproof}

  \vspace{1em}

  Let $\Xi(\Xb)$ be a maximal $A$-co-invariant extension of $\Sigma(\Xb)$ (which always exists by Zorn's lemma). Note that $\Xi(\Xb)$ is also $A$-invariant.\footnote{Normally we couldn't require a maximal $A$-co-invariant partial type to be $A$-invariant, but since every complete type in $F$ is $A$-invariant, \emph{any} partial type extending $\Sigma(\Xb)$ is $A$-invariant.} %
  Also note that since $\Xi(\Xb)$ is a monster type, we have that if $a$ is a realization of $p(x) \in S(A)$, then $\Xi(\Xb) \vdash p(x_a)$.

  Now let $\Sc$ be the class of $A$-invariant types $p(\xbar)$ satisfying the following property:
  \begin{itemize}
  \item[$(\ast)$] For any finite sub-tuple $y_0,\dots,y_{n-1}$ of $\xbar$, let $q(\ybar)$ be the restriction of $p$ to the variables $\ybar$. If $a_0a_1\dots a_{n-1} \models q \res A$, then $q(x_{a_0},x_{a_1},\dots,x_{a_{n-1}}) \cup \Xi(\Xb)$ is consistent.
  \end{itemize}
Note that since $\Xi(\Xb)$ is $A$-co-invariant, the above property doesn't depend on the choice of $\abar$. We would like to show that the class $\Sc$ satisfies the closure properties in \cref{defn:reliable} relative to $\Ic$ and therefore every type in $\Sc$ is reliably in $\Ic$.

The first closure property is immediate. Note that for small tuples of variables, the second closure property is also immediate: If $p(\xbar)$ is an $A$-invariant type in $\Sc$ and $q(\xbar,\ybar)$ is a type in $S_{\xbar\ybar}(A)$ extending $p\res A$, then given $\abar \bbar \models q$, we have that $\Xi(\Xb) \vdash q(x_{\abar},x_{\bbar})$ and so $p(x_{\abar})\cup q(x_{\abar},x_{\bbar})$ is automatically consistent (since $p(x_{\abar})$ is consistent with $\Xi(\Xb)$ by $(\ast)$). For types with large tuples of variables, the result follows from the fact that for a fixed type $q(\xbar)$ over $A$, the set of global extensions of $q(\xbar)$ satisfying $(\ast)$ is closed and so the required result follows by compactness.

Now we need to show that $\Sc$ satisfies the third closure property. Again, we first show this for small tuples of variables.

  \vspace{1em}

  \begin{claim}
    For any $p(\xbar) \in S(A)$ (with $\xbar$ a small tuple of variables) and invariant sequence $(\abar_i)_{i<n}$ over $A$, if $\abar_0 \models p$ and $q(x_{\abar_0})$ is a global extension of $p(x_{\abar_0})$ such that $\Xi(\Xb) \cup q(x_{\abar_0})$ is consistent, then $\Xi(\Xb)\cup q(x_{\abar_0})\cup q(x_{\abar_1})\cup \dots \cup q(x_{\abar_{n-1}}) \cup r(x_{\abar_0},\dots,x_{\abar_{n-1}})$ is consistent, where $r(x_{\abar_0},\dots,x_{\abar_{n-1}}) = \tp(\abar_0\dots \abar_{n-1}/A)$. 
  \end{claim}
  \begin{sloppypar}
    \begin{claimproof}
      Since $(\abar_i)_{i<n}$ is an invariant sequence over $A$, we have by \cref{lem:max-does-not-fracture} that $\Xi(\Xb)\cup q(x_{\abar_0})\cup q(x_{\abar_1})\cup\dots \cup q(x_{\abar_{n-1}})$ is consistent. Since $\Xi(\Xb)$ is a monster type, we have that $\Xi(\Xb)\vdash r(x_{\abar_0},\dots,x_{\abar_{n-1}})$, so the required partial type is consistent.
    \end{claimproof}
  \end{sloppypar}
  \vspace{1em}

  Therefore if $q(x_{\abar_0})$ is any global completion of $p(x_{\abar_0}) = \tp(a_0/A)$ that is consistent with $\Xi(\Xb)$ and $(\abar_i)_{i<n}$ is an invariant sequence over $A$, we have that for any $\bbar$, if $\abar_0 \models q \res A \bbar$, then we can find a global completion $s(x_{\abar_0},\dots,x_{\abar_{n-1}})$ consistent with $\Xi(\Xb)$ and $\abar'_1\dots \abar'_{n-1}\equiv_{A\abar_0}\abar_1\dots \abar_{n-1}$ such that $\abar_0\abar'_1\dots \abar'_{n-1} \models s \res A\bbar$.

  Again the third closure condition for types with large tuples of variables follows from compactness and the fact that if $(\abar_i\bbar_i)_{i<n}$ is an $A$-invariant sequence, then $(\abar_i)_{i<n}$ is an $A$-invariant sequence as well.

  Therefore $\Sc$ satisfies the closure conditions in \cref{defn:reliable} relative to $\Ic$ and so $\Sc \subseteq \Rc$, since $\Rc$ is the largest class of types satisfying this condition. Finally, if $q(\xbar)$ is a type in a small number of variables, we can find a global $A$-invariant type $p(\xbar) \supseteq q(\xbar)$ in the class $\Sc$, which is therefore reliably in $\Ic$. For types with large tuples of variables, the extension again exists by compactness.
\end{proof}

\begin{cor}\label{cor:reliability-is-reliable}
  Fix a set of parameters $A$.
  \begin{enumerate}
  \item If $A$ is an invariance base, then every type $p(x) \in S_x(A)$ extends to a reliably $A$-invariant type.
  \item If $A$ is a model, then every type $p(x) \in S_x(A)$ extends to a reliable $A$-coheir.
  \end{enumerate}
\end{cor}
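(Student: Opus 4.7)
The plan is to deduce both parts from \cref{thm:I-reliable-existence} by choosing the appropriate class $\Ic$ of $A$-invariant types and verifying conditions \ref{I-extension}--\ref{I-restriction}. For part (1), take $\Ic$ to be the class of all global $A$-invariant types; for part (2), take $\Ic$ to be the class of all global types finitely satisfiable in $A$ (these are automatically $A$-invariant since $A$ is a model). In each case, once the four hypotheses of \cref{thm:I-reliable-existence} are checked, the conclusion that every $q \in S(A)$ extends to a member of $\Ic$ that is reliably in $\Ic$ is exactly what we want.

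First, I would handle part (1). Condition \ref{I-extension} is the definition of ``invariance base.'' Condition \ref{I-permutation} is immediate because $A$-invariance is a property of the type, not of how the variables are named. Condition \ref{I-restriction} is immediate from the definition of $A$-invariance, since whether $\varphi(\bar x, \bar b) \leftrightarrow \varphi(\bar x, \bar b')$ belongs to $p$ only depends on the restriction of $p$ to the variables appearing in $\varphi$. For condition \ref{I-closure}, note that
\[ S_{\xbar}(\Ob) \cap \Ic \;=\; \bigcap_{\varphi,\bbar \equiv_A \bbar'} \bigl\{ p : \varphi(\xbar,\bbar) \leftrightarrow \varphi(\xbar,\bbar') \in p \bigr\}, \]
which is an intersection of clopen sets and hence closed.

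Next, I would handle part (2). Condition \ref{I-extension} is the classical fact that over a model every type extends to a global coheir (obtained, for instance, by taking an ultrafilter on $A^{|\xbar|}$ whose average type extends $q$). Condition \ref{I-permutation} is again immediate. Condition \ref{I-restriction} follows because finite satisfiability in $A$ is witnessed formula-by-formula, and each formula involves only finitely many variables. For condition \ref{I-closure}, the set of global types finitely satisfiable in $A$ is
\[ \bigcap_{\varphi(\xbar,\bar y),\,\bbar \in \Ob} \Bigl(\{ p : \neg \varphi(\xbar,\bbar) \in p \} \cup \bigcup_{\abar \in A} \{ p : \varphi(\abar,\bbar) \}\Bigr), \]
an intersection of clopen sets, hence closed.

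With both sets of hypotheses verified, \cref{thm:I-reliable-existence} yields the respective extensions. There is no serious obstacle here: everything reduces to a routine check of closure and extension properties of the two standard classes of invariant types, and the heavy lifting has already been done in the proof of \cref{thm:I-reliable-existence} via the construction of the maximal $A$-co-invariant monster type $\Xi(\Xb)$. The only point that requires a moment's thought is condition \ref{I-closure} for coheirs, where one should write finite satisfiability in $A$ explicitly as a closed condition as above rather than treating it as a single axiom schema.
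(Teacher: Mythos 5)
Your proof is correct and takes exactly the same route as the paper: apply \cref{thm:I-reliable-existence} with $\Ic$ the class of $A$-invariant global types (for an invariance base) or the class of $A$-coheirs (for a model). The paper states the corollary with no verification of the four hypotheses, so your explicit checks of \ref{I-extension}--\ref{I-restriction} are merely a more detailed rendering of the same argument.
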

\begin{proof}
  If $A$ is an invariance base, we can apply \cref{thm:I-reliable-existence} with $\Ic$ equal to the class of $A$-invariant global types. If $A$ is a model, we can apply \cref{thm:I-reliable-existence} with $\Ic$ equal to the class of $A$-coheirs.
\end{proof}

The following corollary is similar to \cite[Cor.~4.9.1]{Mutchnik-NSOP2}, although at the moment it isn't clear if they are directly comparable.

\begin{cor}\label{cor:Kim-fork-Kim-div}
 If $T$ is NCTP, then a formula $\varphi(x,b)$ Kim-forks with regards to extendibly invariant types over an invariance base $A$ if and only if it Kim-divides with regards to extendibly invariant types over $A$.
\end{cor}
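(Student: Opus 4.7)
The right-to-left direction of the equivalence is immediate: any formula that Kim-divides over $A$ with regards to extendibly invariant types automatically Kim-forks (take the trivial disjunction). The task is therefore the reverse direction, and the plan is to adapt the classical ``forking-equals-dividing'' argument by producing a single reliably $A$-invariant type that simultaneously governs all the parameters appearing in the disjunction, and then invoking \cref{prop:reliable-main} to transfer Kim-dividing from the extendibly invariant witnesses to the reliably invariant ones.

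Concretely, suppose $\varphi(x,b)\vdash\bigvee_{i<n}\psi_i(x,c_i)$, where for each $i<n$ the formula $\psi_i(x,c_i)$ Kim-divides over $A$ with respect to some extendibly $A$-invariant type $q_i(z_i)\supseteq\tp(c_i/A)$. Since $A$ is an invariance base, \cref{cor:reliability-is-reliable} lets me extend $\tp(b,c_0,\dots,c_{n-1}/A)$ to a reliably $A$-invariant type $p(y,z_0,\dots,z_{n-1})$. Let $p'(y)$ and $p_i(z_i)$ denote its projections; by clause (1) of \cref{defn:reliable} each projection is again reliably (in particular extendibly) $A$-invariant, with $p'\supseteq\tp(b/A)$ and $p_i\supseteq\tp(c_i/A)$. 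I then generate a Morley sequence $(b_j,c_0^j,\dots,c_{n-1}^j)_{j<\omega}$ from $p$ over $A$, noting that $(b_j)_{j<\omega}$ is Morley in $p'$ and that each column $(c_i^j)_{j<\omega}$ is Morley in $p_i$.

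The claim is that $\{\varphi(x,b_j):j<\omega\}$ is inconsistent, which exhibits $\varphi(x,b)$ as $p'$-Kim-dividing, hence as Kim-dividing with regards to extendibly invariant types over $A$. If not, fix a realization $a$ of the set. For each $j$, the hypothesis gives $a\models\bigvee_{i<n}\psi_i(x,c_i^j)$, so pigeonhole produces an $i^*<n$ and an infinite $J\subseteq\omega$ with $a\models\psi_{i^*}(x,c_{i^*}^j)$ for every $j\in J$. By $A$-invariance of $p_{i^*}$ and indiscernibility of the full column, after reindexing $(c_{i^*}^j)_{j\in J}$ is still a Morley sequence generated by $p_{i^*}$. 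Now $\psi_{i^*}(x,c_{i^*})$ $q_{i^*}$-Kim-divides by assumption, $p_{i^*}$ is reliably $A$-invariant, $q_{i^*}$ is extendibly $A$-invariant, and $p_{i^*}\res A=q_{i^*}\res A=\tp(c_{i^*}/A)$, so the contrapositive of \cref{prop:reliable-main}---valid because $T$ is NCTP---forces $\psi_{i^*}(x,c_{i^*})$ to also $p_{i^*}$-Kim-divide. This contradicts $a$ witnessing the consistency of $\{\psi_{i^*}(x,c_{i^*}^j):j\in J\}$.

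The only conceptually nontrivial ingredient is the existence of a single reliably $A$-invariant extension of $\tp(b,c_0,\dots,c_{n-1}/A)$; this is exactly what \cref{cor:reliability-is-reliable} delivers under the assumption that $A$ is an invariance base, and it is precisely what makes the pigeonhole step compatible with the Morley-sequence structure on each column. The rest---pigeonhole on the disjunction, subsequence extraction, and the application of \cref{prop:reliable-main}---is routine bookkeeping.
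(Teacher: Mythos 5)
Your proof is correct and takes essentially the same route as the paper's: both extend $\tp(b,c_0,\dots,c_{n-1}/A)$ to a single reliably $A$-invariant type via \cref{cor:reliability-is-reliable}, project it onto each coordinate to get reliably (hence extendibly) invariant types, take a common Morley sequence, and apply \cref{prop:reliable-main} to each $\psi_i$. The only difference is that the paper phrases the pigeonhole step directly (``the standard argument'' showing the disjunction is inconsistent once each disjunct is) while you run it by contradiction with an infinite subsequence; these are trivially interchangeable.
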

\begin{proof}
  Suppose that $\varphi(x,b)$ Kim-forks over $A$. Let $\varphi(x,b)\vdash \bigvee_{i<n}\psi_i(x,c_i)$ such that $\psi_i(x,c_i)$ Kim-divides over $A$ for each $i<n$. Let $p(y,z_0,\dots,z_{n-1})$ be a reliably $A$-invariant type extending $\tp(bc_0\dots c_{n-1}/A)$. Let $(d^je_i^j)_{i<n,j<\omega}$ be a Morley sequence generated by $p$. Note that $d^{<\omega}$ and $e^{<\omega}_i$ for each $i<n$ are Morley sequences in reliably $A$-invariant types. By \cref{prop:reliable-main}, we have that $\{\psi_i(x,e_i^j):j<\omega\}$ is inconsistent for each $i<n$. Therefore, by the standard argument, we have that $\{\bigvee_{i<n}\psi_i(x,e_i^j):j<\omega\}$ is inconsistent. By indiscernibility, this implies that $\{\varphi(x,d^j):j<\omega\}$ is inconsistent, or, in other words, that $\varphi(x,b)$ Kim-divides over $A$ with regards to a reliably $A$-invariant type. Any reliably $A$-invariant type is extendibly $A$-invariant, so we are done.
\end{proof}




\section{Local Character}
\label{sec:local-char}

\subsection{Uncountable languages and dual local character}

In the following we will give a characterization of NCTP in terms of a weak dual local character under the assumption that there is a measurable cardinal $\kappa > |T|$. We don't expect this to be necessary, but given the unclear usefulness of this characterization and how straightforward the proof with a measurable cardinal is, we have decided to not pursue a stronger result for the time being. In the next proposition we will also extend \cref{prop:countable-characterization-CTP} to uncountable languages.

\begin{prop}\label{prop:char-uncountable-languages}
  Let $T$ be a theory in a (possibly uncountable) language $\Lc$. Assume $T$ has $k$-CTP witness by the formula $\varphi(x,y)$.
  \begin{enumerate}
  \item\label{char-uncountable-1} There is a model $M$ with $|M| = |\Lc|$, an $M$-heir-coheir $p(y)$, and an $M$-coheir $q(y)$ such that $p\res M = q \res M$ and $\varphi(x,y)$ $q$-Kim-divides but does not $p$-Kim-divide.
  \item\label{char-uncountable-2} For any regular cardinal $\kappa > |\Lc|$, there is a model $M$ with $|M| = \kappa$, a continuous chain $(N_\e)_{\e < \kappa}$ of elementary substructures with $M = \bigcup_{\e < \kappa}N_\e$ and $|N_\e|<\kappa$ for each $\e < \kappa$, and a sequence $(e_\e)_{\e < \kappa}$ such that $e_\e \notin N_\e$ for each $\e < \kappa$, $\{\varphi(x,e_\e):\e < \kappa\}$ is $k$-inconsistent, and for each $\e < \kappa$, there is an $N_\e$-heir-coheir $s_\e \supseteq \tp(e_\e/N_\e)$ such that $\varphi(x,e_\e)$ does not $s_\e$\nobreakdash-\hspace{0pt}Kim-divide.
  \end{enumerate}
\end{prop}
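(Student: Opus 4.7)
The plan is to extend the forcing-style construction of \cref{prop:countable-characterization-CTP} to transfinite stages. By compactness, we may replace the height-$\omega$ CTP witness by a family $(b_\sigma)_{\sigma \in 2^{<\mu}}$ for any ordinal $\mu$, retaining $k$-inconsistency on $\omega$-subpaths and consistency on right-combs. Up to a standard Ramsey/EM extraction, we may further assume $(b_\sigma)$ is strongly indiscernible over any small base we care about, which will be convenient for part \ref{char-uncountable-2}.

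For part \ref{char-uncountable-1}, let $\lambda = |\Lc|$ and work in $2^{<\lambda^+}$. Enumerate $\lambda \times \lambda$ with cofinal repetition as $(k(\e), m(\e))_{\e < \lambda}$. The idea is to build by transfinite induction on $\e \leq \lambda$ a continuous chain of elementary substructures $(M_\e)$ of total cardinality $\lambda$, an increasing chain $(\sigma_\e)$ in $2^{<\lambda^+}$ with $\sigma_{\e+1} \tgeq \sigma_\e \frown 1$, and a decreasing chain of subsets $(X_\e)$ with $X_\e \subseteq \upsett{\sigma_\e \frown 1}$ dense above $\sigma_\e \frown 1$. Successor stages replay the two-case analysis of the countable proof applied to the formula $\psi^{m(\e)}_{k(\e)}$. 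At limit stages take $\sigma_\e = \bigcup_{\delta < \e} \sigma_\delta$, $M_\e = \bigcup_{\delta < \e} M_\delta$, and $X_\e = \upsett{\sigma_\e \frown 1}$; the filter generated by all $X_\delta$ through $\e$ remains everywhere somewhere dense because each $X_\delta \cap \upsett{\sigma_\e \frown 1}$ is dense above $\sigma_\e \frown 1$. After completion, set $M = M_\lambda$, extend the filter generated by all $X_\e$ together with $Y := \bigcup_{\e < \lambda} \upsett{\sigma_\e \frown 0}$ to an everywhere somewhere dense ultrafilter via \cref{lem:everything-everywhere-all-at-once}, and define $p, q$ as in the countable case. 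The heir, coheir, agreement, and right-comb non-Kim-dividing arguments carry over verbatim, using that cofinally many stages handle each $M$-formula.

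For part \ref{char-uncountable-2}, run the construction with $\kappa$ in place of $\lambda$ (in $2^{<\kappa^+}$). Set $N_\e := M_\e$ and $e_\e := b_{\sigma_{\e+1}}$; then $\{\varphi(x, e_\e) : \e < \kappa\}$ is $k$-inconsistent because $(\sigma_\e)_{\e < \kappa}$ contains $\omega$-subpaths, and $e_\e \notin N_\e$ provided the constants added at case-1 stages are never chosen to be path parameters. For each $\e$, construct $s_\e$ by extending the stage-$\e$ filter together with $Y_\e := \bigcup_{\delta \leq \e} \upsett{\sigma_\delta \frown 0}$ to an everywhere somewhere dense ultrafilter, and let $s_\e$ be the corresponding $N_\e$-coheir. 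The heir and coheir properties are witnessed exactly as in part \ref{char-uncountable-1}. The strong $N_\e$-indiscernibility of the tree ensures $s_\e \res N_\e = \tp(e_\e / N_\e)$: every $N_\e$-formula in $s_\e$ is realized cofinally in $X_\e$ at the level of $\sigma_{\e+1}$, and those $b_\tau$'s are $N_\e$-equivalent to $b_{\sigma_{\e+1}} = e_\e$ by indiscernibility. Finally, the right-comb argument on the sets $\upsett{\sigma_\delta \frown 0}$ for $\delta > \e$ gives that $\varphi(x, e_\e)$ does not $s_\e$-Kim-divide.

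The main technical obstacle is the limit-stage bookkeeping in part \ref{char-uncountable-1} — verifying that the filter remains everywhere somewhere dense through all transfinite cofinalities — and, in part \ref{char-uncountable-2}, the coordination needed to make each local $s_\e$ extend $\tp(e_\e / N_\e)$ while retaining the heir property, which is where the strong indiscernibility of the tree does the essential work.
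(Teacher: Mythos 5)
Your guiding idea — run the forcing-style construction of \cref{prop:countable-characterization-CTP} through $|\Lc|$ (resp.\ $\kappa$) many stages — is the same as the paper's, but your implementation uses a fixed tree $2^{<\mu}$ with $\mu \geq \lambda := |\Lc|$, and this creates two problems that are not just bookkeeping.

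First, the cardinality bound fails. Your chain $(\sigma_\e)_{\e < \lambda}$ is strictly increasing with $\lg(\sigma_\e) \geq \e$ (since you pass to $\sigma_\e\frown 1$ or above at every successor, and take unions at limits), so you need tree height at least $\lambda$, hence a tree of cardinality at least $2^{<\lambda}$. But $2^{<\lambda}$ exceeds $\lambda$ in general — for example $2^{<\aleph_1} = 2^{\aleph_0} > \aleph_1$ when CH fails. Since $M_0$ must contain the entire tree, you end up with $|M| \geq 2^{<\lambda}$, which violates $|M| = |\Lc|$ in part~\ref{char-uncountable-1} and destroys $|N_\e| < \kappa$ in part~\ref{char-uncountable-2}. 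Passing to a thin subtree doesn't readily save this: the natural small subtrees (e.g., finite-support sequences) don't contain the $\sigma_\e$ at limit stages, since these accumulate infinitely many $1$'s.

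Second, and independently, your choice $e_\e := b_{\sigma_{\e+1}}$ cannot satisfy $e_\e \notin N_\e$. You've put the whole tree $(b_\sigma)$ into $M_0 = N_0$, so every $b_\sigma$ lies in every $N_\e$. The constants added in case-1 steps are not the issue — the tree parameters themselves are. The indiscernibility argument you invoke for $s_\e \res N_\e = \tp(e_\e/N_\e)$ cannot compensate for this, because the requirement $e_\e \notin N_\e$ is just false in your setup.

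The paper avoids both issues by never committing to a big tree: it expands the language with a unary predicate $R_\alpha$ defining the ``current closed tree,'' a unary $X_\alpha$ for the current dense set, the tree order $\tleq$, and the successor functions $b \mapsto b\frown 0, b\frown 1$. At each successor stage it takes a \emph{small} elementary extension in which the tree is grown only as needed (and shrunk back down to a small closed tree $S$), and crucially chooses $c_{\alpha+1}$ so that, by compactness, no element of $R_\alpha(M_\alpha)$ lies $\tleq$-above it — which is what yields $e_\e \notin N_\e$ in part~\ref{char-uncountable-2}. At limit stages it again uses compactness in the expanded language to find a new $c_\alpha$ above all previous ones, rather than taking a literal union inside a fixed $2^{<\mu}$. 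The resulting $R$ in the final model is not isomorphic to any $2^{<\mu}$; it is an abstract ``closed tree'' of size $\kappa$ in which the right-comb and path consistency/inconsistency properties are preserved by elementarity (they are definable in $\Lc_0$). This is the essential extra idea your proposal is missing, and without it the cardinality and novelty-of-$e_\e$ requirements cannot both be met.
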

\begin{proof}
  The proofs of 1 and 2 are nearly the same. If we are proving 1, let $\kappa = |\Lc|$, and if we are proving 2, let $\kappa$ be as in the statement of the proposition. When we say a model $N$ is \emph{small}, we mean that $|N| \leq \kappa$ if we are proving 1 and $|N| < \kappa$ if we are proving $2$. 
  
  Let $(b_\sigma)_{\sigma \in 2^{<\omega}}$ witness that the formula $\varphi(x,y)$ has $k$-CTP. Let $M_0 \supseteq \{b_\sigma : \sigma \in 2^{<\omega}\}$ be a small model. Expand $M_0$ with the following predicates:
  \begin{itemize}
  \item A unary predicate $R_0$, which defines the set $\{b_\sigma:\sigma \in 2^{<\omega}\}$.
  \item A unary predicate $X_0$, which defines the set $\{b_\sigma : \sigma \tgeq 1\}$.
  \item A partial order $\tleq$ on $R_0$ defined by $b_\sigma \tleq b_\tau$ if and only if $\sigma \tleq \tau$.
  \item The functions $b_\sigma \mapsto b_{\sigma \frown 0}$ and $b_\sigma \mapsto b_{\sigma \frown 1}$, which we will write as $b \frown 0$ and $b \frown 1$.
  \end{itemize}
  Let $\Lc_0$ be the expanded language. %
  
  Let $(\beta(\alpha),\gamma(\alpha))_{\alpha < \kappa}$ be an enumeration of $\kappa^2$ with the property that for each pair $\delta,\e < \kappa$, the set $\{\alpha < \kappa : \langle \beta(\alpha),\gamma(\alpha) \rangle = \langle \delta,\e \rangle\}$ is cofinal in $\kappa$. (Such an enumeration can be defined using any bijection between $\kappa$ and $\kappa^3$.)

  Let $(R_\alpha)_{\alpha < \kappa}$ and $(X_\alpha)_{\alpha < \kappa}$ be two sequences of distinct unary predicates. For each $\alpha < \kappa$, let $\Lc_\alpha = \Lc_0 \cup \{R_\beta,X_\beta:\beta \leq \alpha\}$. We will inductively build a sequence of elements $(b_\alpha)_{\alpha < \kappa}$ and a chain of small structures $(M_\alpha)_{\alpha<\kappa}$ (with $M_\alpha$ an $\Lc_\alpha$\nobreakdash-\hspace{0pt}structure) where for any $\beta < \alpha$, $M_\beta$ is an $\Lc_\beta$-elementary substructure of $M_\alpha$. Each $R_\alpha$ will be downwards closed under $\tleq$ and closed under the functions $b\mapsto b\frown 0$ and $b\mapsto b\frown 1$. We will call any set closed under these a \emph{closed tree}. %
  Furthermore, for $\beta < \alpha$, we will have $R_\alpha \subseteq R_\beta$, $X_\alpha \subseteq X_\beta$, and $X_\alpha \subseteq R_\alpha$. (If we are proving $1$, the $X_\alpha$'s will eventually be part of the filter that we use to define our two coheirs.) %

  Let $c_0 = b_\varnothing$. At stage $\alpha < \kappa$, suppose we have our small $\Lc_\alpha$-structure $M_\alpha$ (i.e., if we are proving 1, then $|M_\alpha| \leq \kappa$ and if we are proving 2 then $|M_\alpha| < \kappa$). Furthermore, suppose that $X_\alpha$ is dense above $c_\alpha\frown 1$ in $R_\alpha$ (i.e., for any $c \in R_\alpha(M_\alpha)$ with $c \tgeq b_\alpha$, there is a $d \in R_\alpha(M_\alpha)$ such that $d \tgeq c$ and $d \in X_\alpha(M_\alpha)$). Let $(\psi^\alpha_{\beta(\delta)}(y,z))_{\delta < \kappa}$ be an enumeration of all $\Lc_\alpha(M_\alpha)$-formulas. To get $M_{\alpha+1}$ and $c_{\alpha+1}$, perform the following construction:
  \begin{itemize}
  \item If $\psi^{\gamma(\alpha)}_{\beta(\alpha)}(y,z)$ has already been defined and there is an $\Lc_\alpha$-elementary extension $N \succeq M_\alpha$, a closed tree $S \subseteq R_\alpha(N)$ with $S \supseteq R_\alpha(M_\alpha)$, and a $d$ (in the $\Th(M_\alpha)$-monster) such that $X_\beta(N)$ is dense above $c_{\beta+1}$ in $S$ for each $\beta < \alpha$ and $\{b \in S\cap X_\alpha(N) : \psi^{\gamma(\alpha)}_{\beta(\alpha)}(b,d)\}$ is somewhere dense above $c_\alpha \frown 1$ in $S$: By L\"owenheim-Skolem, we can assume that $N$ is small. Let $M_{\alpha+1}\supseteq N d$ be a small elementary extension. Find $c_{\alpha+1} \in X_{\alpha}(N)$ with $c_{\alpha+1} \tgeq c_\alpha \frown 1$ such that $\psi^{\gamma(\alpha)}_{\beta(\alpha)}(c_{\alpha+1},d)$ holds. By compactness, we may assume that for any $b \in R_\alpha(M_\alpha)$, $\neg(b\tgeq c_{\alpha+1})$. Let $R_{\alpha+1}(M_{\alpha+1}) = S$, and let
    \[
      X_{\alpha+1}(M_{\alpha+1})=\{b \in S\cap X_\alpha(M_{\alpha+1}) : b \tgeq c_{\alpha+1}\frown 1,~\psi^{\gamma(\alpha)}_{\beta(\alpha)}(b,d)\}.
    \]
    Note that $X_{\alpha+1}(M_{\alpha+1})$ is dense above $c_{\alpha+1}\frown 1$ in $R_{\alpha+1}(M_{\alpha+1})$.
  \item If the condition in the previous bullet point fails: Find a small $\Lc_\alpha$-elementary extension $M_{\alpha+1}\succeq M_\alpha$ with a $c_{\alpha+1}\tgeq c_\alpha\frown 1$ such that $c_{\alpha+1}\in X_\alpha(M_{\alpha+1})$ and such that for any $b \in R_\alpha(M_{\alpha})$, $\neg(b \tgeq c_{\alpha+1})$. (This is always possible because $R_\alpha(M_{\alpha})$ is a closed tree.) Let $R_{\alpha+1}(M_{\alpha+1})=R_\alpha(M_{\alpha+1})$, and let $X_{\alpha+1}(M_{\alpha+1}) = \{b \in X_{\alpha}(M_{\alpha=1}) : b \tgeq c_{\alpha+1}\frown 1\}$.
  \end{itemize}
  Note that in both cases we have ensured that $c_{\alpha+1} \in X_{\alpha}$ and that $c_{\alpha+1}$ is not $\tleq$-upper-bounded by any element of $R_\alpha(M_\alpha)$. Also note that for every $\beta \leq \alpha$, we have that $X_\beta(M_{\alpha+1})$ is dense above $c_{\beta+1}$ in $R_{\alpha+1}(M_{\alpha+1})$.

  Just before limit stage $\alpha < \kappa$, suppose that we have $M_\beta$ and $c_{\beta}$ for all $\beta < \alpha$. Let $N = \bigcup_{\beta < \alpha}M_\beta$, which we can regard as an $\Lc_{<\alpha}$-structure (where $\Lc_{<\alpha}\coloneqq \bigcup_{\beta<\alpha}\Lc_\beta$). By compactness, we can find a small model $M_{\alpha} \succeq N$, a closed tree $R_\alpha(M_\alpha)\subseteq M_\alpha$, a set $X_\alpha(M_\alpha)\subseteq R_\alpha(M_\alpha)$, and $c_\alpha \in R_\alpha(M_\alpha)$ such that for all $\beta < \alpha$, $R_\alpha(M_\alpha)\subseteq R_\beta(M_\alpha)$, $X_\alpha(M_\alpha)\subseteq X_\beta(M_\alpha)$, $c_\alpha \tgeq c_\beta$, and $X_\alpha(M_\alpha)$ is dense above $c_\alpha \frown 1$ in $R_\alpha(M_\alpha)$. If we are proving 2 and $\{X_\beta(y):\beta < \alpha\}$ axiomatizes a complete $\Lc_{<\alpha}$\nobreakdash-\hspace{0pt}type $r_\alpha(y)$ over $N$, we may moreover assume that there is a $d_\alpha \in M_\alpha$ realizing $r_\alpha(y)$ such that $c_\beta \tleq d_\alpha \tleq c_\alpha$ for all $\beta < \alpha$. 

  After the construction is completed, let $M = \bigcup_{\alpha < \kappa}M_\alpha$. Note that $|M|=\kappa$ by induction. 

  \vspace{1em}

  \noindent \emph{Proof of 1.} Let $R = \bigcup_{\alpha < \kappa}R_\alpha(M_\alpha)$. Note that $R$ is a closed tree. %

  Let $\Fc$ be the filter generated by $\{X_\alpha(M):\alpha < \kappa\}\cup \{Y\}$, where $Y \coloneqq \bigcup_{\alpha<\kappa}\{b \in R : b \tgeq c_\alpha \frown 0\}$.

  \vspace{1em}

   \noindent \emph{Claim 1.} For every $\beta < \kappa$, $X_\beta(M)$ is dense above $c_{\beta+1}$ in $R$.
  \begin{claimproof}
    Fix a $\beta < \kappa$. At each stage $\alpha < \kappa$ with $\alpha \geq \beta$, we ensured that $X_\beta(M_{\alpha+1})$ is dense above $c_{\beta+1}$ in $R_{\alpha+1}(M_{\alpha+1})$. Fix $b \in R$ with $b \tgeq c_{\beta+1}$. Since $R$ is the union of a chain, there is an $\alpha < \kappa$ with $\alpha \geq \beta$ such that $b \in R_{\alpha+1}(M_{\alpha+1})$. At stage $\alpha$, we ensured that $ X_\beta(M_{\alpha+1})$ is dense above $c_{\beta+1}$. Therefore there is a $d \in X_\beta(M_{\alpha+1})\cap R_{\alpha+1}(M_{\alpha+1})$ with $d\tgeq b$. Since $R_{\alpha+1}(M_{\alpha+1}) \subseteq R$, we have that $d \in R$. By elementarity, we also have that $b \in X_{\beta}(M)$. Since we can do this for any $b \in R$ with $b \tgeq c_{\beta+1}$, we have that $X_\beta(M)$ is dense above $c_{\beta+1}$ in $R$.
  \end{claimproof}

  \vspace{1em}

  The previous claim implies that the filter $\Fc$ is everywhere somewhere dense. Therefore we can extend it to an everywhere somewhere dense ultrafilter $\Uc$ by the same argument as in \cref{lem:everything-everywhere-all-at-once}.

  Let $\Gc$ be the filter generated by $\{X_\alpha(M): \alpha < \kappa\}\cup\{\{c_\alpha : \alpha < \kappa\}\}$. Since $c_\alpha \in X_{\beta}(M_\beta)\subseteq X_{\beta}(M)$ for any $\beta < \alpha < \kappa$, we have that $\Gc$ is a proper filter. Let $\Vc$ be an ultrafilter extending $\Gc$.

  Let $p(y)$ be the global average type of $\Uc$, and let $q(y)$ be the global average type of $\Vc$.

  \vspace{1em}

   \noindent \emph{Claim 2.} $p \res M = q\res M$. Moreover, this type is axiomatized by $\{X_\alpha(y): \alpha < \kappa\}$.
  \begin{claimproof}
    Fix an $M$-formula $\psi(y) \in p(y)$. $\psi(y)$ is actually an $M_\beta$ formula for some $\beta < \kappa$. By the choice of our enumeration $(\beta(\alpha),\gamma(\alpha))_{\alpha < \kappa}$, this means that there was a stage $\alpha$ at which $\psi^{\beta(\alpha)}_{\gamma(\alpha)}(y,z)$ was defined and equal to $\psi(y)$ (where $z$ is a dummy variable). Since $\psi(y) \in p(y)$ and since $\Uc$ is everywhere somewhere dense, we must have chosen the first bullet point at this stage. (In particular, the required condition is satisfied with $N = M$ and $S=R$.) Hence $\psi(b)$ holds for all $b \in X_{\alpha+1}(M_{\alpha+1})$. By elementarity, this implies that $\psi(b)$ holds for all $b \in X_{\alpha+1}(M)$ as well, implying that $\psi(c_{\delta})$ holds for all $\delta > \alpha + 1$. Therefore $\psi(y) \in q(y)$ as well. Since we can do this for every $M$-formula $\psi(y)\in p(y)$, we have that $p \res M = q \res M$ and that this type is axiomatized by $\{X_\alpha(y) : \alpha < \kappa\}$.
  \end{claimproof}

\vspace{1em}

\noindent \emph{Claim 3.}  $p$ is an heir over $M$.
\begin{claimproof}
  Fix an $M$-formula $\psi(y,z)$. Once again, there must have been a stage $\alpha$ at which $\psi^{\beta(\alpha)}_{\gamma(\alpha)}$ was defined and equal to $\psi$. Suppose that there is a $d$ in the monster such that $\psi(y,d) \in p(y)$. Since $\Uc$ is everywhere somewhere dense, this implies that the conditions for the first bullet point were met (with $N=M$ and $S=R$ and the parameter $d$). Therefore we added a parameter $e$ to $M_{\alpha+1}$ such that $\psi(b,e)$ holds for all $b \in X_{\alpha+1}(M_{\alpha+1})$. By elementarity, this implies that $\psi(b,e)$ holds for all $b \in X_{\alpha+1}(M)$. Therefore $\psi(y,e) \in p(y)$. Since we can do this for any $M$-formula $\psi(y,z)$, we have that $p(y)$ is an heir of $p\res M$ over $M$.
\end{claimproof}

\vspace{1em}

For any $n$, the set of finite right-combs of cardinality $n$ in $(b_\sigma)_{\sigma < \omega}$ is definable in the language $\Lc_0$. Therefore, by elementarity, we have that for any right-comb $Z \subseteq R$, $\{\varphi(x,b) : b \in Z\}$ is consistent. Likewise, being a $\tleq$-increasing sequence of size $n$ is a definable property, so for every path $Z$ in $R$, $\{\varphi(x,b) : b \in Z\}$ is $k$-inconsistent. 

\vspace{1em}

 \noindent \emph{Claim 4.} $\varphi(x,y)$ does not $p$-Kim-divide.
\begin{claimproof}
  The set $Y \coloneqq \bigcup_{\alpha < \omega}\{b \in R : b \tgeq c_\alpha \frown 0\}$ is in $\Uc$. Likewise, for each $\alpha < \kappa$, we have that the set $\{b \in R : b \tgeq c_\alpha\}$ is in $\Uc$. Note that if $\alpha_0,\alpha_1,\dots,\alpha_{n-1}$ is an increasing sequence of ordinals and if $b_i \tgeq c_{\alpha_i}\frown 0$ for each $i<n$, then $\{b_i:i<n\}$ is a right-comb.
  
  Let $(e_n)_{n<\omega}$ be a Morley sequence generated by $p$. By the argument in the paragraph just before Claim 4, we have that for any finite right-comb $Z \subseteq Y$, $\{\varphi(x,b) : b \in Z\}$ is consistent. Suppose that for some $n < \omega$ (possibly $0$), we've shown that for every finite right-comb $Z\subseteq Y$, $\{\varphi(x,b) : b \in Z\}\cup \{\varphi(x,e_i): i< n\}$ is consistent. For any such $Z$, there is an $\alpha$ such that for any $d \in Y$ with $d \tgeq c_\alpha$, $Z \cup \{d\}$ is a right-comb. This implies that $\{\varphi(x,b) : b \in Z\} \cup \{\varphi(x,e_i): i < n+1\}$ is consistent. Hence, by induction, we have that $\{\varphi(x,e_i) : i < \omega\}$ is consistent and so $\varphi(x,y)$ does not $p$-Kim-divide.
\end{claimproof}

\vspace{1em}

Finally, note that since $\{c_\alpha : \alpha < \kappa\} \in \Vc$ and since $\{\varphi(x,c_\alpha) : \alpha < \kappa\}$ is $k$-inconsistent, we have that $\{\varphi(x,e_i) : i < \omega\}$ is $k$-inconsistent for any Morley sequence $(e_i)_{i<\omega}$ generated by $q$. Therefore $p$ and $q$ satisfy the required conditions.

\vspace{1em}

\noindent \emph{Proof of 2.} Say that an $M_\beta$-formula $\psi$ is \emph{covered} by $\alpha < \kappa$ if $\psi^{\beta(\alpha)}_{\gamma(\alpha)}$ was defined and equal to $\psi$ at stage $\alpha$. Since $\kappa$ is regular, we have by a standard argument that there is a club $C \subseteq \kappa$ such that for every $\alpha \in C$, every $\beta < \alpha$, and every $M_\beta$-formula $\psi$, there is a $\gamma < \alpha$ such that $\psi$ is covered by $\gamma$.

Fix some $\mu \in C$. Let $A_\mu = \bigcup_{\alpha < \mu}M_\alpha$. We can now repeat the proofs of the claims in the proof of 1. In particular, we get from Claim 2 that $\{X_\alpha(y): y < \mu\}$ axiomatizes a complete type. Therefore we added an element $d_\mu$ to $M_\mu$ realizing this type. From Claims 3 and 4, we get that there is an $A_\mu$-heir-coheir $p_\mu(y)$ extending $\tp(d_\mu/A_\mu)$ such that $\varphi(x,y)$ does not $p$-Kim-divide.

Let $(\mu(\e))_{\e<\kappa}$ be an enumeration of $C$ in order. Let $N_\e = A_{\mu(\e)}$, $e_\e = d_{\mu(\e)}$, and $s_\e= p_{\mu(\e)}$ for each $\e < \kappa$. Since $(e_\e)_{\e < \kappa}$ is an $\tleq$-increasing sequence, we have that $\{\varphi(x,e_\e) : \e < \kappa\}$ is $k$-inconsistent, so we are done.
\end{proof}

For the following recall that on a measurable cardinal $\kappa$, we can find a normal ultrafilter $\Uc$ (i.e., for any sequence $(X_\alpha)_{\alpha < \kappa}$ of elements of $\Uc$, we have that the diagonal intersection $\Delta_{\alpha < \kappa}X_\alpha \coloneqq \{\beta < \kappa : \beta \in \bigcap_{\alpha < \beta}X_\alpha\}$ is an element of $\Uc$). In particular $\Uc$ is $\kappa$-complete and also has the property that every $X \in \Uc$ is stationary, implying that every club in $\kappa$ is in $\Uc$.

\begin{lem}\label{lem:large-cardinal-magic}
  Let $\kappa > |T|$ be a measurable cardinal. Let $M$ be a model of $T$ with $|M|= \kappa$. Let $(N_\alpha)_{\alpha < \kappa}$ be a continuous chain of elementary substructures of $M$ with $|N_\alpha| < \kappa$ for each $\alpha < \kappa$ and $\bigcup_{\alpha < \kappa} N_\alpha = M$. Let $(c_\alpha)_{\alpha < \kappa}$ be any sequence of elements of $M$.

  For any normal ultrafilter $\Uc$ on $\kappa$, there is an $X \in \Uc$ such that for every $\alpha \in X$,  $\tp(c_\alpha/ N_\alpha)$ is finitely satisfiable in $\{c_\beta : \beta < \alpha\}$ and $\{c_\beta : \beta < \alpha\} \subseteq N_\alpha$.
\end{lem}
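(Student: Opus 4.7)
The plan is to build $X$ as the intersection of a single $\Uc$-large ``reflection'' set with a handful of clubs. First I would form the $\Uc$-average type
\[
  p^\ast(x) \coloneqq \{\varphi(x,\bar a) : \bar a \in M,~\{\alpha < \kappa : M \models \varphi(c_\alpha,\bar a)\} \in \Uc\},
\]
which is a complete type over $M$. For each $\beta < \kappa$, since $|T|\cdot |N_\beta| < \kappa$ there are fewer than $\kappa$ formulas with parameters in $N_\beta$, so by $\kappa$-completeness of $\Uc$ the set
\[
Y_\beta \coloneqq \{\alpha < \kappa : \forall \bar a \in N_\beta\ \forall \varphi\ (M\models \varphi(c_\alpha,\bar a) \Leftrightarrow \varphi(x,\bar a) \in p^\ast)\}
\]
lies in $\Uc$. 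Set $Y \coloneqq \Delta_{\beta < \kappa} Y_\beta$, which is in $\Uc$ by normality.

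I would then collect three club sets (each of which lies in $\Uc$ since $\Uc$ contains every club). Let $L$ be the club of limit ordinals below $\kappa$. For each $\beta < \kappa$ let $g(\beta)$ be the least $\gamma$ with $c_\beta \in N_\gamma$ (well defined because $\bigcup_\gamma N_\gamma = M$); the club of closure points of $g$ is contained in $E \coloneqq \{\alpha : \{c_\beta : \beta < \alpha\} \subseteq N_\alpha\}$. Since $Y$ is stationary, its set of limit points is a club contained in $D \coloneqq \{\alpha \in L : Y \cap \alpha\text{ is cofinal in }\alpha\}$. Set $X \coloneqq Y \cap L \cap D \cap E \in \Uc$.

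To verify the conclusion, fix $\alpha \in X$. The inclusion $\{c_\beta : \beta < \alpha\}\subseteq N_\alpha$ is immediate from $\alpha \in E$. Suppose $\varphi(x,\bar a) \in \tp(c_\alpha/N_\alpha)$. Since $(N_\gamma)_{\gamma < \kappa}$ is continuous and $\alpha$ is a limit, $N_\alpha = \bigcup_{\beta < \alpha}N_\beta$, so $\bar a \in N_\beta$ for some $\beta < \alpha$; then $\alpha \in Y \subseteq Y_\beta$ forces $\varphi(x,\bar a) \in p^\ast$. Using $\alpha \in D$, pick $\gamma \in Y$ with $\beta < \gamma < \alpha$; by the definition of the diagonal intersection $\gamma \in Y_\beta$, and since $\bar a \in N_\beta$ we conclude $M \models \varphi(c_\gamma,\bar a)$. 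Applying the same argument to any finite conjunction of formulas in $\tp(c_\alpha/N_\alpha)$ yields the required finite satisfiability in $\{c_\beta : \beta < \alpha\}$.

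The proof is essentially one move, and the only subtle point is organizing the two uses of normality of $\Uc$: normality is used once to form the diagonal intersection $Y$, so that $c_\alpha$ reflects $p^\ast$ over all of $N_\alpha$ for every $\alpha \in Y \cap L$, and once again (via stationarity of $Y$) to ensure $Y \cap \alpha$ is cofinal in $\alpha$ for club-many $\alpha$, producing witnesses $c_\gamma$ lying strictly below $\alpha$.
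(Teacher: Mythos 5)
Your proof is correct and follows the same strategy as the paper's (form the $\Uc$-average type, use $\kappa$-completeness to see that $\Uc$-almost every $c_\alpha$ realizes its restriction to each small $N_\beta$, then diagonally intersect and cut down to the relevant clubs). You are in fact slightly more careful than the paper's write-up in explicitly intersecting with the club $D$ of $\alpha$ at which $Y \cap \alpha$ is cofinal, which is what actually supplies a witness $c_\gamma$ with $\gamma < \alpha$.
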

\begin{proof}
  Let $p(y) = \{\psi(x,b) : \{\alpha < \kappa : \psi(c_\alpha,b)\} \in \Uc\} \in S_y(M)$. By a standard argument, there is a club $C \subseteq \kappa$ such that for any $\alpha \in C$, $\{c_\beta : \beta<\alpha\} \subseteq N_\alpha$. Note that since $C$ is a club, $C \in \Uc$.

  For each $\alpha$, let $p_\alpha = p\res N_\alpha$. By construction we have that for each $\psi(y) \in p_\alpha$, $\{\beta < \kappa : \psi(c_\beta)\} \in \Uc$. Since $|N_\alpha|<\kappa$ and since $\Uc$ is $\kappa$-complete, this implies that $\{\beta < \kappa : c_\beta \models p_\alpha\} \in \Uc$.

  Since $\Uc$ is normal, we have that $X\coloneqq C \cap\lim \kappa\cap\Delta_{\alpha < \kappa}\{\beta < \kappa:c_\beta \models p_\alpha\} \in \Uc$. Consider $\beta \in X$. We have that for every $\alpha < \beta$, $ c_\beta \models p_\alpha$ and $p_\alpha$ is finitely satisfiable in $\{c_\gamma : \gamma < \alpha\} \subseteq N_\gamma$. Since $\beta$ is a limit ordinal, this implies that $\tp(c_\beta/N_\beta)$ is finitely satisfiable in $\{c_\alpha:\alpha < \beta\} \subseteq N_\beta$. Since we can do this for any $\beta \in X$, we are done.
\end{proof}

\begin{defn}[{\cite[Def.~5.3]{Kaplan2019}}]
  A set $\Gamma(x)$ of formulas is a \emph{dual type} if there is some $k<\omega$ such that $\Gamma(x)$ is $k$-inconsistent. 
\end{defn}

\begin{prop}\label{prop:dual-char}
  Let $\kappa > |T|$ be a measurable cardinal. Let $M$ be the unique saturated model of $T$ of cardinality $\kappa$. Suppose that there is a dual type $\Gamma(x)$ over $M$ with $|\Gamma(x)| = \kappa$ and a club $C$ of small elementary substructures of $M$ such that for any $N \in C$, there is a $\varphi_N(x,c_N) \in \Gamma(x)$ with $c_N \notin N$ such that for some $N$-invariant $p_N(y) \supset \tp(c/N)$, $\varphi(x,c)$ does not $p$-Kim-divide.
  \begin{enumerate}
  \item\label{dual-CTP} If $p_N(y)$ is $N$-bi-invariant for each $N \in C$, then $T$ has CTP.
  \item\label{dual-ATP} If $p_N(y)$ is strongly $N$-bi-invariant for each $N \in C$, then $T$ has ATP.
  \end{enumerate}
\end{prop}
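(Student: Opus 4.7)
The plan is to reduce both parts to \cref{prop:Kim-failure-implies-CTP} by constructing, for a suitable small elementary substructure $A \subseteq M$, an $A$-bi-invariant (respectively, strongly $A$-bi-invariant) type $p(y)$ together with an $A$-invariant type $q(y)$ that agree on $A$ and with respect to which a single formula $\varphi(x,y)$ $q$-Kim-divides but does not $p$-Kim-divide. The conclusions then follow from parts \ref{can-has-CTP} and \ref{can-has-ATP} of that proposition, respectively.

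First I would fix a normal ultrafilter $\Uc$ on $\kappa$ and pick a continuous elementary chain $(N_\alpha)_{\alpha<\kappa}$ inside $C$ with $\bigcup_{\alpha<\kappa}N_\alpha = M$, arranged so that $c_{N_\alpha} \in N_{\alpha+1}$; write $c_\alpha$, $p_\alpha$, $\varphi_\alpha$ for the data associated to $N_\alpha$. Since $|T|<\kappa$ and $\Uc$ is $\kappa$-complete, I may pass to a measure-one set on which $\varphi_\alpha$ is a fixed formula $\varphi(x,y)$. Define $q(y)=\{\psi(y,d):\{\alpha<\kappa:\models\psi(c_\alpha,d)\}\in\Uc\}$, which is a global $M$-coheir (hence $M$-invariant) finitely satisfiable in $(c_\alpha)_{\alpha<\kappa}$.

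The key step is to locate $\alpha^*$ with $\tp(c_{\alpha^*}/N_{\alpha^*})=q\res N_{\alpha^*}$; this is where measurability does its work. For each fixed $\beta<\kappa$, inaccessibility of $\kappa$ gives $|S(N_\beta)|<\kappa$, so $\kappa$-completeness produces a measure-one set $Y_\beta$ on which $\tp(c_\alpha/N_\beta)$ takes the constant value $q\res N_\beta$ (the value being forced by the definition of $q$). Normality of $\Uc$ then ensures the diagonal intersection $Y=\Delta_{\beta<\kappa}Y_\beta$ is measure-one, and for any limit $\alpha\in Y$, continuity of the chain yields $\tp(c_\alpha/N_\alpha)=\bigcup_{\beta<\alpha}\tp(c_\alpha/N_\beta)=q\res N_\alpha$. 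I would pick $\alpha^*$ in this measure-one set of limit ordinals that also satisfies $\varphi_{\alpha^*}=\varphi$, and set $A=N_{\alpha^*}$, $p=p_{\alpha^*}$.

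It remains to verify the hypotheses of \cref{prop:Kim-failure-implies-CTP}. By assumption $p$ is $A$-invariant and $A$-bi-invariant (strongly so in case \ref{dual-ATP}); $q$ is $A$-invariant as already noted; $p\res A=\tp(c_{\alpha^*}/A)=q\res A$ by the choice of $\alpha^*$; and $\varphi(x,y)$ does not $p$-Kim-divide by the standing hypothesis on $p_{\alpha^*}$. To see that $\varphi(x,y)$ $q$-Kim-divides, observe that $(c_\beta)_{\beta\in Y,\,\beta>\alpha^*}$ is a Morley sequence generated by $q$ over $A$: each such $c_\beta$ realizes $q\res N_\beta$, which contains $q\res A$ together with all $c_\gamma$ for $\alpha^*<\gamma<\beta$ (since $c_\gamma\in N_{\gamma+1}\subseteq N_\beta$), and the $k$-inconsistency of $\Gamma$ makes $\{\varphi(x,c_\beta)\}$ along this sequence $k$-inconsistent. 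The main obstacle in executing the plan is the self-referential stabilization step: one needs the diagonal intersection argument together with continuity of the chain to coincide at the chosen limit $\alpha^*$ so that the equality $\tp(c_{\alpha^*}/N_{\alpha^*})=q\res N_{\alpha^*}$ holds on the nose rather than just as an approximation along the chain below $\alpha^*$.
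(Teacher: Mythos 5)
Your proof is correct and takes essentially the same approach as the paper's: both fix a normal ultrafilter on $\kappa$, use $\kappa$-completeness to stabilize the formula, and use normality (a diagonal-intersection argument) to locate a point $\alpha^*$ on the chain where $\tp(c_{\alpha^*}/N_{\alpha^*})$ is finitely satisfiable in $\{c_\beta : \beta < \alpha^*\}$, hence where a suitable coheir $q$ agrees with $p_{\alpha^*}$ over $N_{\alpha^*}$, allowing one to conclude via \cref{prop:Kim-failure-implies-CTP}. The only structural difference is that you inline the diagonal-intersection content that the paper factors out as \cref{lem:large-cardinal-magic}.
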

\begin{proof}
 The proofs of \ref{dual-CTP} and \ref{dual-ATP} are nearly identical. We will write the proof of \ref{dual-CTP}. To get the proof of \ref{dual-ATP}, just insert the word `strongly' in the appropriate places.

  By a standard argument we may assume that there is a continuous chain $(N_\alpha)_{\alpha < \kappa}$ of small elementary substructures of $M$ such that $M = \bigcup_{\alpha < \kappa}N_\alpha$ and $C = \{N_\alpha : \alpha < \kappa\}$. Let $\Uc$ be a normal ultrafilter on $\kappa$. For each $\alpha < \kappa$, let $\varphi_\alpha(x,c_\alpha) \in \Gamma(x)$ be a formula with $c_\alpha \notin N_\alpha$ such that for some $N_\alpha$-bi-invariant $p_\alpha(y) \supseteq \tp(c_\alpha/N_\alpha)$, $\varphi_\alpha(x,c_\alpha)$ does not $p_\alpha$-Kim-divide.

  By $\kappa$-completeness, there is a $Y \in \Uc$ and a formula $\varphi(x,y)$ such that for every $\alpha \in Y$, $\varphi_\alpha=\varphi$. By \cref{lem:large-cardinal-magic}, there is an $X \in \Uc$ such that for every $\alpha \in X$, $\tp(c_\alpha/N_\alpha)$ is finitely satisfiable in $\{c_\beta : \beta < \alpha\}$ and $\{c_\beta : \beta < \alpha\}\subseteq N_\alpha$.

  For any $\alpha \in X \cap Y$, we now have that for any global coheir $q$ finitely satisfiable in $\{c_\beta : \beta < \alpha\}\subseteq N_\alpha$, if $(e_i)_{i<\omega}$ is a Morley sequence generated by $q$, then $\{\varphi(x,e_i):i<\omega\}$ is $k$-inconsistent for some $k<\omega$. On the other hand, we have that $\varphi(x,e_0)$ does not $p_\alpha$-Kim-divide where $p_\alpha$ is an $N_\alpha$-bi-invariant type satisfying $p_\alpha \res N_\alpha = q\res N_\alpha$. Therefore $T$ has CTP.
\end{proof}

\begin{cor}\label{cor:CTP-char-dual}
  Fix a complete theory $T$. If there is a measurable cardinal $\kappa > |T|$, then the following are equivalent.
  \begin{enumerate}
  \item $T$ has CTP.
  \item There is a model $M \models T$ with $|M| = \kappa$, a dual type $\Gamma(x)$ over $M$, and a club $C$ of small elementary substructures of $M$ such that for any $N \in C$, there is a $\varphi(x,c) \in \Gamma(x)$ with $c \notin N$ such that for some $N$-bi-invariant $p(y) \supseteq \tp(c/N)$, $\varphi(x,c)$ does not $p$-Kim-divide.
  \item The same as 2, but with each $p(y)$ an $N$-heir-coheir.
  \end{enumerate}
\end{cor}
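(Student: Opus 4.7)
My plan is to prove the cycle $(1) \Rightarrow (3) \Rightarrow (2) \Rightarrow (1)$. For $(1) \Rightarrow (3)$ I would apply \cref{prop:char-uncountable-languages}(\ref{char-uncountable-2}) directly to the cardinal $\kappa$. Since $\kappa$ is measurable, it is regular and satisfies $\kappa > |T| \geq |\Lc|$, so the hypothesis applies. This yields a model $M$ of cardinality $\kappa$, a continuous chain $(N_\e)_{\e<\kappa}$ of small elementary substructures with union $M$, and, for each $\e$, parameters $e_\e \notin N_\e$ and an $N_\e$-heir-coheir $s_\e \supseteq \tp(e_\e / N_\e)$ along which $\varphi(x, e_\e)$ does not Kim-divide, with $\{\varphi(x,e_\e) : \e<\kappa\}$ being $k$-inconsistent for some fixed $k$. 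Setting $\Gamma(x) = \{\varphi(x,e_\e) : \e < \kappa\}$ and $C = \{N_\e : \e < \kappa\}$ packages this data as $(3)$.

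The implication $(3) \Rightarrow (2)$ is immediate, since any $N$-heir-coheir is automatically $N$-bi-invariant: if $p(x)$ is an $N$-heir and $a \models p \res N b$, then $\tp(b/Na)$ extends to an $N$-coheir, which is $N$-invariant. For $(2) \Rightarrow (1)$ I would appeal to \cref{prop:dual-char}(\ref{dual-CTP}). Although that proposition is phrased for the unique saturated model of cardinality $\kappa$, inspection of its proof shows that saturation is used only to produce a continuous chain of small elementary substructures exhausting $M$. Starting from the club $C$ given in $(2)$, one extracts a continuous cofinal subchain of $C$ and reindexes it as $(N_\alpha)_{\alpha < \kappa}$; the remainder of the proof---fixing a normal ultrafilter on $\kappa$, invoking \cref{lem:large-cardinal-magic} to find a large set of $\alpha$ where $\tp(c_\alpha/N_\alpha)$ is finitely satisfiable in $\{c_\beta : \beta < \alpha\}$, picking a coheir $q$ so that $q \res N_\alpha = p_\alpha \res N_\alpha$, and then applying \cref{prop:Kim-failure-implies-CTP}(\ref{can-has-CTP}) to deduce CTP from the failure of Kim's lemma for the bi-invariant type $p_\alpha$ against $q$---goes through without change.

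The only step requiring any real care is the bookkeeping in $(2) \Rightarrow (1)$: one must verify that the abstract ``club'' hypothesis is strong enough to furnish the continuous chain used in the proof of \cref{prop:dual-char} and that saturation of $M$ is genuinely inessential there. I expect this to be a minor point rather than a genuine obstacle, since club subsets of the collection of small elementary substructures of a model of cardinality $\kappa$ (with $\kappa$ regular) are cofinal and closed under continuous unions of length less than $\kappa$ essentially by definition.
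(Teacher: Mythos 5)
Your proof is correct and follows precisely the route the paper leaves implicit: $(1)\Rightarrow(3)$ by \cref{prop:char-uncountable-languages}\ref{char-uncountable-2}, $(3)\Rightarrow(2)$ since heir-coheirs are bi-invariant, and $(2)\Rightarrow(1)$ by \cref{prop:dual-char}\ref{dual-CTP}. You are also right to flag, and correct in resolving, the small mismatch that \cref{prop:dual-char} is stated for the unique saturated model of size $\kappa$ while item (2) of the corollary allows an arbitrary model of size $\kappa$: an inspection of the proof of \cref{prop:dual-char} shows that saturation is never actually invoked, since the club furnishes the required continuous exhausting chain (a standard fact about clubs in $[M]^{<\kappa}$ for $\kappa$ regular and $|M|=\kappa$) and the remaining steps use only the normal ultrafilter, \cref{lem:large-cardinal-magic}, $\kappa$-completeness with $|T|<\kappa$, and \cref{prop:Kim-failure-implies-CTP}.
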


It seems likely that \cref{cor:CTP-char-dual} does not require a large cardinal, but nevertheless the question needs to be asked.

\begin{quest}
  Does \cref{cor:CTP-char-dual} hold without the existence of a large cardinal?
\end{quest}

Finally, we can essentially ask \cref{quest:ATP-Kim-char} again.

\begin{quest}
  Does \cref{cor:CTP-char-dual} hold for ATP if the types $p(y)$ are assumed to be strongly bi-invariant?
\end{quest}


\subsection{NATP implies generic stationary local character}
\label{sec:natp-implies-generic}

Simplicity, NTP$_2$, and NSOP$_2$ all have characterizations in terms of some kind of local character (which we will state in sub-optimal forms for the sake of exposition):
\begin{itemize}
\item $T$ is simple if and only if it satisfies \emph{local character}: There is a $\kappa$ such that for every global type $p(x)$, there is an $M$ with $|M|\leq \kappa$ such that $p(x)$ does not divide over $M$.
\item $T$ is NTP$_2$ if and only if it satisfies \emph{generic local character}: There is a $\kappa$ such that for every global type $p(x)$ and every $M$ with $|M| \leq \kappa$, there is an $N \succeq M$ with $|N| \leq \kappa$ such that for any $d$, if $d \indi_M N$, then $p \res Nd$ does not divide over $N$ \cite{Chernikov2014}.
\item $T$ is NSOP$_1$ if and only if it satisfies \emph{stationary local character}: There is a $\kappa$ such that for any global type $p(x)$, the set
  \[
    \{M \preceq \Ob : |M|\leq\kappa,~p(x)~\text{does not Kim-divide over}~M\}
  \]
   is stationary in the set of models of size less than $\kappa$ \cite{Kaplan2019}.
\end{itemize}
Note that in all three of these, (Kim-)dividing coincides with (Kim-)forking.

Given the existence of these characterizations, when Kruckman visited the logic group at the University of Maryland in 2020, we tried to come up with a reasonable mutual generalization of generic local character and stationary local character. The idea being that this would be a nice complementary approach to trying to mutually generalize NTP$_2$ and NSOP$_1$. %
We came to the following definition. %

\begin{defn}\label{defn:gslc}
  For any type $p(x)$ and any small $M,N \models T$ with $M \preceq N$, we write $\Xi(p,M,N)$ for the following condition:
  \begin{itemize}
  \item[$ $] For any $M$-formula $\varphi(x,y)$ and any $d$ such that $\varphi(x,d) \in p(x)$ and $d \indi_M N$, $\varphi(x,d)$ does not Kim-divide over $N$. 
  \end{itemize}
  We say that $T$ satisfies \emph{generic stationary local character}\footnote{Perhaps \emph{stationarily generic local character} would be a more correct name, but it doesn't quite roll off the tongue. It also doesn't generalize well to the club version discussed in \cref{quest:gclc}.} if for every $\lambda$, there is a $\kappa\geq \lambda$ such that for every $\kappa^+$-saturated model $O$, every type $p \in S(O)$, and every $M \preceq O$ with $|M|\leq \lambda$,
  \[
    \{N \preceq O: N \succeq M,~|N|\leq \kappa,~\Xi(p,M,N)\}
  \]
 is stationary in $[O]^\kappa \coloneqq \{X \subseteq O : |X| = \kappa\}$.
\end{defn}

As it turns out, however, generic stationary local character is probably too strong to characterize NBTP in that its failure actually implies ATP.

\begin{lem}\label{lem:bigger-bi}
  Fix $\kappa \geq |T|$. For any $\kappa^+$-saturated model $O$, any $M \preceq O$ with $|M| \leq \kappa$, and any $M$-invariant type $p(x)$, the set $\{N \subseteq [O]^\kappa:N\models T,~p~\text{is an }N\text{-heir}\}$ is a club in $[O]^\kappa$.
\end{lem}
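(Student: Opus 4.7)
The plan is to verify the two defining properties of a club—closure under increasing unions and cofinality (unboundedness)—for the set $\mathcal{C} = \{N \in [O]^\kappa : N \preceq O,\ M \subseteq N,\ p\text{ is an }N\text{-heir}\}$. Passing from ``$N \models T$'' to ``$N \preceq O$'' via downward L\"owenheim-Skolem costs nothing at the club level since elementary-substructurehood is itself a club condition.

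For closure, let $(N_\alpha)_{\alpha < \gamma}$ be an increasing chain in $\mathcal{C}$ with $\gamma$ a limit ordinal. Then $N \coloneqq \bigcup_\alpha N_\alpha$ is an elementary substructure of $O$ by Tarski-Vaught, of cardinality $\leq \kappa$, and contains $M$. For any $N$-formula $\varphi(x,y)$, the finitely many parameters in $\varphi$ lie in some single $N_\alpha$, so $\varphi$ is already an $N_\alpha$-formula; if $\varphi(x,b) \in p$ for some $b$ in the monster, then since $p$ is an $N_\alpha$-heir there is $c \in N_\alpha \subseteq N$ with $\varphi(x,c) \in p$. Hence $p$ is an $N$-heir, so $N \in \mathcal{C}$.

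For cofinality, fix $X \in [O]^\kappa$. I would build an elementary chain $N_0 \preceq N_1 \preceq \cdots$ inside $O$, each of cardinality exactly $\kappa$, with $X \cup M \subseteq N_0$, arranging that at each stage and for each $N_n$-formula $\varphi(x,y)$ such that $\varphi(x,b) \in p$ has \emph{some} solution $b$ in the monster, a specific witness $c \in O$ with $\varphi(x,c) \in p$ is adjoined to $N_{n+1}$. The key point—the only place both hypotheses are genuinely used—is that such a $c$ can indeed be found in $O$: the type $\tp(b / N_n)$ has size at most $\kappa < \kappa^+$, so $\kappa^+$-saturation of $O$ produces a realization $b' \in O$; since $M \subseteq N_n$, we have $b' \equiv_M b$, and $M$-invariance of $p$ yields $\varphi(x,b') \in p$, so $c \coloneqq b'$ works. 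Since $|N_n| + |T| \leq \kappa$, only $\leq \kappa$ formulas appear at each stage, so $N_{n+1}$ can be taken of size $\kappa$ inside $O$. Setting $N = \bigcup_{n<\omega}N_n$ gives an elementary submodel of $O$ of cardinality $\kappa$ containing $X$; every $N$-formula lives in some $N_n$, so $N \in \mathcal{C}$.

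There is no real obstacle here: the entire content is the saturation-plus-invariance observation in the cofinality step. Everything else is a standard reflection and closing-off argument of the kind familiar from the theory of stationary sets on $[O]^\kappa$.
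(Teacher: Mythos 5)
Your proof is correct and takes essentially the same approach as the paper's: closure is observed to be formal, and unboundedness is proved by a closing-off chain argument. Your write-up is actually more careful than the paper's terse version in identifying the key step—using $\kappa^+$-saturation of $O$ to pull a realization $b'$ of $\tp(b/N_n)$ into $O$ and then invoking $M$-invariance of $p$ to conclude $\varphi(x,b')\in p$—which is exactly the point where both hypotheses enter and which the paper leaves implicit.
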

\begin{proof}
  The set in question is clearly closed, so we just need to show that it is unbounded. Fix $M_0 \preceq O$ with $|M_0| = \kappa$. Given $M_n$, we can find a model $M_{n+1}$ with $|M_{n+1}| \leq \kappa$ such that for any $M_n$-formula $\varphi(x,y)$, if there is a $b$ in the monster such that $\varphi(x,b) \in p(\xbar)$, then there is a $c \in M_{n+1}$ such that $\varphi(\xbar ,c) \in p$. Let $N = \bigcup_{n<\omega}M_n$. We have that $p$ is an heir over $N$.
\end{proof}

\begin{prop}\label{prop:gen-stat-ATP}
  Suppose that $T$ does not satisfy generic stationary local character. Then there is a small model $N$, a formula $\varphi(x,d)$, a strongly $N$-bi-invariant type $p(y) \supset \tp(d/M)$, and an $N$-invariant $q(y)\supset \tp(d/M)$ such that $\varphi(x,d)$ $q$\nobreakdash-\hspace{0pt}Kim-divides but does not $p$-Kim-divide.
\end{prop}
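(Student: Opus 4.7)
The approach is to use the failure of GSLC as an abundance of ``almost-good'' witnesses, which we combine into a single invariant-independent sequence and push through \cref{fact:heir-coheir-basic} to get the required strong bi-invariance. Fix $\lambda$ witnessing the failure of GSLC and a regular $\kappa$ much larger than $2^{\lambda + |\Lc|}$. Applying the failure at $(\lambda, \kappa)$ yields a $\kappa^+$-saturated $O \models T$, a type $p \in S(O)$, and $M \preceq O$ with $|M| \leq \lambda$, such that the set of ``good'' $N$---those for which $\Xi(p, M, N)$ holds---is non-stationary in $[O]^\kappa$. Its complement therefore contains a club, which we intersect with the club of $(\lambda + |\Lc|)^+$-saturated elementary submodels $N \preceq O$ containing $M$ to obtain a club $C$ of bad, saturated $N$. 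For each $N \in C$ pick a witness: an $M$-formula $\varphi_N$ and $d_N \in O$ with $\varphi_N(x, d_N) \in p$, $d_N \indi_M N$, and $\varphi_N(x, d_N)$ Kim-dividing over $N$. Pigeonhole down to a stationary $S \subseteq C$ on which $\varphi_N = \varphi$ and $\tp(d_N/M)$ are constant.

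Fix $N_0 \in S$ and set $d = d_{N_0}$, $N = N_0$. Take $q$ to be an $N$-invariant extension of $\tp(d/N)$ witnessing the Kim-dividing of $\varphi(x, d)$ over $N$. To build $p$, inductively select $(N_i, d_i)_{i < \omega}$ with $d_0 = d$: at step $i \geq 1$, use stationarity of $S$ (which meets the club $\{N' \in [O]^\kappa : N' \supseteq N_0 \cup \{d_j : j < i\}\}$) to find $N_i \in S$ containing $N_0$ and all previously selected witnesses, and set $d_i = d_{N_i}$. Then $d_i \indi_M N_i$ and $N_i \supseteq N d_0 \cdots d_{i-1}$, so $(d, d_1, d_2, \dots)$ is an $M$-invariant-independent sequence of realizations of the common type $\tp(d/M)$, while $\{\varphi(x, d_i) : i < \omega\} \subseteq p$ is automatically consistent.

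Extract, via standard modeling techniques (Ramsey plus compactness, after lengthening the sequence inside a sufficiently saturated ambient model if needed), an $Nd$-indiscernible sequence $(d'_i)_{i \geq 1}$ whose finite EM-type over $Nd$ realizes that of an infinite subsequence of the $d_i$'s, and set $d'_0 = d$. The sequence $(d, d'_1, d'_2, \dots)$ is then $N$-indiscernible (the tail is $Nd$-indiscernible and each singleton realizes $\tp(d/N)$ over $N$) and inherits $M$-invariant-independence, so it is a Morley sequence over $N$ of a unique $M$-invariant global type $r$ with $r \supseteq \tp(d/N)$. By \cref{fact:heir-coheir-basic}, the $(\lambda + |\Lc|)^+$-saturation of $N$ makes $r$ strongly $N$-bi-invariant, and the consistency of $\{\varphi(x, d_i)\}$ transfers via the common EM-type to $\{\varphi(x, d'_i)\}$, showing that $\varphi(x, d)$ does not $r$-Kim-divide. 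Setting $p = r$ completes the configuration. The main obstacle is arranging that the extracted Morley sequence begins at $d$ itself---not at an $M$-automorphic copy---which is why we must extract indiscernibility over $Nd$ rather than merely over $M$, so that $d$ slots in as $d'_0$ and $r|_N = \tp(d/N)$ is automatic.
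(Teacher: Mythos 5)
The key gap is in the extraction step, where you claim the extracted $N$-indiscernible sequence ``is a Morley sequence over $N$ of a unique $M$-invariant global type $r$.'' An $N$-indiscernible sequence satisfying $d'_i \indi_M d'_{<i}$ need not be generated by any single $M$-invariant (or even $N$-invariant) type: the relation $\indi_M$ is witnessed at each step by \emph{some} $M$-invariant type, but these witnessing types may differ from step to step, and Ramsey-plus-compactness extraction produces only a sequence, not a generating type. Without a concrete $M$-invariant $r$ in hand you cannot invoke \cref{fact:heir-coheir-basic} (which requires $r$ to be $M$-invariant and $N$ to be $(|M|+|\Lc|)^+$-saturated) to conclude that $r$ is strongly $N$-bi-invariant. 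There is also a secondary problem earlier in the same step: you prepend $d$ to an $Nd$-indiscernible tail $(d'_i)_{i\geq 1}$ and claim the resulting sequence is $N$-indiscernible. This would require, for instance, $(d, d'_{i_1}, \dots, d'_{i_k}) \equiv_N (d'_1, \dots, d'_{k+1})$, which does not follow from $Nd$-indiscernibility of the tail alone---you additionally need $d$ to be interchangeable with the later terms over $N$, and nothing in your construction arranges this (the original $(d_i)_{i<\omega}$ is not $N$-indiscernible, so matching its finite $Nd$-patterns does not put $d$ on equal footing with the extracted terms).

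The paper sidesteps both issues by choosing, for each bad model $N_i$, an $M$-invariant global type $q_i \supseteq \tp(d_i/N_i)$ \emph{before} running the stationary-set machinery, and then pigeonholing so that the $q_i$ are all literally the same type $q$ on a stationary set. (This is exactly why $\kappa$ is taken to be $2^{2^\lambda}$: it must dominate the number of $M$-invariant types, not just the number of types over $M$, which is the weaker bound your ``$\kappa$ much larger than $2^{\lambda + |\Lc|}$'' controls.) With a single $q$ fixed, the sequence $(d_i)$ on the stationary set is by construction a Morley sequence of $q$ over the chosen base, and the club argument of \cref{lem:bigger-bi} (showing the set of $N$ over which $q^{\otimes n}$ is an heir is a club) delivers strong bi-invariance directly, with no appeal to indiscernible extraction at all. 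Your overall strategy---stationary bad set, pigeonhole, build a long sequence of witnesses---is parallel to the paper's, but the missing ingredient is fixing the invariant type ahead of time and pigeonholing on it; that is precisely the point your argument tries to recover after the fact via extraction, and where it breaks.
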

\begin{proof}
  Let $\lambda$ witness that $T$ fails generic stationary local character. Let $\kappa = 2^{2^\lambda}$. Note that there are at most $\kappa$ $M$-invariant types for any $M$ with $|M|\leq \lambda$.

  There is a $\kappa^+$-saturated model $O$, a type $p(x) \in S(O)$, and an $M\preceq O$ with $|M|\leq \lambda$ such that $\{N \preceq O: N \succeq M,~|N|\leq \kappa,~\Xi(p,M,N)\}$ is not stationary in $[O]^\kappa$. This means that there is a club $C \subseteq [O]^\kappa$ such that for every model $N \in C$ with $N \succeq M$, there is a $\varphi(x,d) \in p(x)$ such that $d \indi_M N$ and $\varphi(x,d)$ Kim-divides over $N$. By induction, we can build a continuous chain $(N_i)_{i<\kappa^+}$ of elementary substructures of $O$ and a sequence $(\varphi_i(x,d_i))_{i<\kappa^+}$ of formulas in $p(x)$ such that for each $i<\kappa^+$, $d_i \indi_M N_i$ and $\varphi_i(x,d_i)$ Kim-divides over $N_i$. For each $i$, let $q_i(y) \supseteq \tp(d_i/N_i)$ be an $M$-invariant type.

  For each $j<\kappa^+$, let $D_j = \{i<\kappa^+ : i > j,~(\forall n < \omega)q_j^{\otimes n}~\text{is an}~N_i\text{-heir}\}$. By \cref{lem:bigger-bi}, each $D_j$ is a club in $\kappa^+$. Therefore the diagonal intersection $\Delta_{j<\kappa^+}D_j = \{i<\kappa^+: (\forall j < i)i \in D_j\}$ and also $E \coloneqq \lim\kappa^+\cap \Delta_{j<\kappa^+}D_j$ are clubs. For each $i \in E$, let $f(i)$ be the least $j$ such that $d_i \in N_j$. This is a regressive function, so by Fodor's lemma, there is a $k<\kappa^+$ and a stationary set $S \subseteq E$ such that $f(i) = k$ for all $i \in S$. Since the number of $M$-formulas is less than $\kappa$, there is a stationary set $S'\subseteq S$ such that for any $i$ and $j$ in $S'$, $\varphi_i = \varphi_j$. Since the number of $M$-invariant types is at most $\kappa$, there is a stationary set $S''\subseteq S'$ such that for any $i$ and $j$ in $S''$, $q_i=q_j$. Let $j = \min(S'')$ and let $\ell = \min(S'' \setminus \{j\})$.

  We now have that $(d_i)_{i\in S''\setminus\{j\}}$ is a Morley sequence generated by $q_\ell$ over $N_\ell$. Since $p(x)$ is consistent, we have that $\{\varphi_\ell(x,d_i) : i \in S''\setminus \{j\}\}$ is consistent as well. Furthermore, $q_j = q_\ell$ has the property that $q_j^{\otimes n}$ is an $N_j$-heir for each $n<\omega$. Therefore $q_j$ is strongly $N_\ell$-bi-invariant. Finally, we have that $\varphi_\ell(x,d_\ell)$ Kim-divides over $N_\ell$. Therefore $T$ has ATP by \cref{prop:Kim-failure-implies-CTP}.
\end{proof}

\begin{cor}\label{cor:if-NATP-then-gslc}
  If $T$ is NATP, then it satisfies generic stationary local character.
\end{cor}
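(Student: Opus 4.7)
The plan is to prove the corollary by contrapositive, using the two propositions that have just been established. Specifically, I will assume that $T$ does not satisfy generic stationary local character and conclude that $T$ has ATP.

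First, I invoke \cref{prop:gen-stat-ATP} directly. This gives me a small model $N$, a formula $\varphi(x,d)$, a strongly $N$-bi-invariant type $p(y) \supset \tp(d/N)$, and an $N$-invariant type $q(y) \supset \tp(d/N)$ with $p \res N = q \res N$ (both extending $\tp(d/N)$) such that $\varphi(x,d)$ is $q$-Kim-divides but does not $p$-Kim-divide. (Strictly speaking, the statement of \cref{prop:gen-stat-ATP} as written involves the auxiliary model $M$ inside the construction, but what matters for the next step is only the data $(N, \varphi(x,d), p, q)$ realizing the configuration in the hypothesis of \cref{prop:Kim-failure-implies-CTP}\ref{can-has-ATP}.)

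Second, I apply \cref{prop:Kim-failure-implies-CTP}\ref{can-has-ATP} to this configuration. Since $p$ is strongly $N$-bi-invariant, that proposition yields that $T$ has ATP. This completes the contrapositive and hence the proof of the corollary.

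There is no substantive obstacle here: the entire content is packaged in the two previous propositions, and this corollary is simply their composition. The only minor thing to double-check is that the parameter set over which bi-invariance is witnessed in \cref{prop:gen-stat-ATP} matches the set over which the Kim-dividing asymmetry occurs, which it does by the way the configuration is produced in that proposition's proof.
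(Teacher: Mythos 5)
Your proof is correct and is exactly the paper's argument: the paper's proof consists of the single sentence that the corollary follows immediately from Propositions~\ref{prop:Kim-failure-implies-CTP} and~\ref{prop:gen-stat-ATP}. You have simply unpacked that composition (contrapositive, invoke \cref{prop:gen-stat-ATP} to produce the configuration over some small $N$ with $p\res N = q\res N = \tp(d/N)$, then apply \cref{prop:Kim-failure-implies-CTP}\ref{can-has-ATP}), which is the intended reading.
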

\begin{proof}
  This follows immediately from Propositions~\ref{prop:Kim-failure-implies-CTP} and \ref{prop:gen-stat-ATP}.
\end{proof}

In our opinion the lesson of \cref{prop:gen-stat-ATP} and \cref{cor:if-NATP-then-gslc} is that it is unlikely that NCTP will be characterized by something like \cref{defn:gslc}. It is difficult to imagine how to tune the forbidden configuration so that it will build bi-invariant types but not also strongly bi-invariant types.\footnote{Naturally, this would be a non-issue if it does turn out that CTP and ATP are equivalent.}

Broadly speaking, Propositions~\ref{prop:easy-construction} and \ref{prop:countable-characterization-CTP} would seem to indicate that bi-invariance is expected generically at the level of formulas, whereas \cref{prop:gen-stat-ATP} indicates that strong bi-invariance is expected generically at the level of partial types.

Of course, \cref{cor:if-NATP-then-gslc} is unsatisfactory in a couple of different ways. First of all, it is not a characterization.

\begin{quest}
  If $T$ satisfies generic stationary local character, does it follow that $T$ is NATP?
\end{quest}

But also the local character characterizations of simplicity, NTP$_2$, and NSOP$_1$ have tighter cardinal bounds than we have stated. Our proof of \cref{prop:gen-stat-ATP} shows that if generic stationary local character fails at $\lambda$, then we can build an instance of ATP from any $(2^{2^\lambda})^+$\nobreakdash-\hspace{0pt}saturated model. For all three of the aforementioned conditions, it is known that this is witnessed by any $|T|^+$-saturated model. The proofs of these tighter bounds rely on more detailed structural understandings of the relevant class of tame theories, and in particular that the tameness condition is characterized by the associated notion of local character.

\begin{quest}
  If $T$ fails to have generic stationary local character, is this witnessed by models that are $|T|^+$-saturated?
\end{quest}

For NTP$_2$ in particular, the original statement of generic local character in \cite{Chernikov2014} did not require the big model to have any degree of saturation but instead required that each $\tp(d/N)$ extends to a strictly invariant type. We could make a similar statement here, more in the vein of \cref{prop:dual-char}, by dropping the requirement that the big model be saturated but require that each $\tp(d/N)$ extends to a strongly bi-invariant type. We did not opt to highlight this version of \cref{prop:gen-stat-ATP}, however, as the interesting thing about the proposition is the fact that we do not need to assume that some configuration of (strongly) bi-invariant types happens to exist, as we do in \cref{prop:dual-char}.

Finally, in the case of NSOP$_1$, we have done the relevant results in \cite{Kaplan2019} a bit of a disservice. In \cite{Kaplan2019}, Kaplan, Ramsey, and Shelah actually prove that in NSOP$_1$ theories, for any global type $p(x)$, the set of $|T|$-sized models $M \prec \Ob$ over which $p(x)$ does not Kim-divide is a club. This is an instance of the kind of dichotomous behavior you expect from a dividing line: Either every type is `good' on a club of small models or there is a type that is `bad' on a club of small models. Again, however, the proof of this relies heavily on a structural understanding of NSOP$_1$ theories, so it is entirely unclear if something like this would generalize to the present context, which leaves an obvious question.

\begin{quest}\label{quest:gclc}
  Say that a theory has \emph{generic club local character} if for any global type $p(x)$ and any $M \prec \Ob$ with $|M| \leq |T|$, the set $\{N \prec \Ob : N \succeq M,~|N|\leq|T|,~\Xi(p,M,N)\}$ is a club in $[\Ob]^{|T|}$.  Is generic stationary local character equivalent to generic club local character?
\end{quest}


\appendix
  \section{Dual results for NSOP$_1$}
\label{sec:Dual-NSOP1}

In \cite[Lem.~2.8]{Mutchnik-NSOP2}, Mutchnik shows that SOP$_2$ is equivalent to the following condition:
\begin{itemize}
\item[$ $] There is a $k<\omega$, a tree $(b_{\sigma})_{\sigma \in \omega^{<\omega}}$, and a formula $\varphi(x,y)$ such that for any path $\alpha \in \omega^\omega$, $\{\varphi(x,b_{\alpha \res n}) : n < \omega\}$ is consistent  but for any right-comb $C \subset \omega^{<\omega}$, $\{\varphi(x,b_\sigma) : \sigma \in C\}$ is  $k$-inconsistent.
\end{itemize}
Furthermore, by the main result of \cite{Mutchnik-NSOP2}, this is equivalent to SOP$_1$. In \cite{Some-Remarks-Kim-dividing-NATP}, Kim and Lee show that the above condition is equivalent after replacing the tree $\omega^{<\omega}$ with $2^{<\omega}$. The $2^{<\omega}$ form of the above condition is `dual' to \cref{defn:CTP} in the sense that it is identical after swapping the words `consistent' and `$k$-inconsistent.' Essentially all of the proofs given in this paper are insensitive to this duality, so we can freely conclude several new results for NSOP$_1$ theories. (Some of our results, such as \cref{prop:Kim-failure-implies-CTP}, are trivial when dualized, however.)

\begin{prop}[Dual of \cref{prop:countable-characterization-CTP} and \cref{prop:char-uncountable-languages} part \ref{char-uncountable-1}]
  If $T$ has SOP$_1$, then there is a model $M$, a formula $\varphi(x,y)$, an $M$-coheir $p(y)$, and an $M$\nobreakdash-\hspace{0pt}heir-coheir $q(y)$ such that $p\res M = q\res M$ and $\varphi(x,y)$ $q$-Kim-divides but does not $p$-Kim-divide.
\end{prop}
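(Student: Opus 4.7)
The plan is to run the construction in the proof of \cref{prop:countable-characterization-CTP} essentially verbatim, simply interchanging the roles played by $p$ and $q$. The reason this works is that the Mutchnik--Kim--Lee dual characterization of SOP$_1$ gives us a binary tree $(b_\sigma)_{\sigma \in 2^{<\omega}}$ and a formula $\varphi(x,y)$ witnessing SOP$_1$ with the consistency pattern dual to CTP: along any path $\alpha \in 2^\omega$, the set $\{\varphi(x,b_{\alpha \res n}) : n<\omega\}$ is \emph{consistent}, and for any right-comb $X \subseteq 2^{<\omega}$, $\{\varphi(x,b_\sigma) : \sigma \in X\}$ is $k$-inconsistent. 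This is precisely the swap of the roles of ``path'' and ``right-comb'' in \cref{defn:CTP}, so it should exactly reverse which of the two coheirs Kim-divides.

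First I would build, exactly as in \cref{prop:countable-characterization-CTP}, a countable model $M = \bigcup_n M_n$, a path $(\sigma_n)_{n<\omega}$ in $2^{<\omega}$, a decreasing sequence $(X_n)_{n<\omega}$ of subsets of $2^{<\omega}$ with $X_n$ dense above $\sigma_n \frown 1$, and the auxiliary set $Y = \bigcup_{n<\omega}\upsett{\sigma_n \frown 0}$. Apply \cref{lem:everything-everywhere-all-at-once} to obtain an everywhere somewhere dense ultrafilter $\Uc$ extending the filter generated by $\{X_n : n<\omega\}\cup\{Y\}$. Now define $q(y)$ to be the global $M$-coheir corresponding to $\Uc$ (the one that was called $p$ in \cref{prop:countable-characterization-CTP}), and define $p(y)$ to be any non-realized global $M$-coheir finitely satisfiable in $\{b_{\sigma_n} : n<\omega\}$ (the one that was called $q$ there).

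The verification proceeds by replaying the three claims in \cref{prop:countable-characterization-CTP}. The proofs that $p \res M = q \res M$ and that $q$ is an $M$-heir go through unchanged, since they only use the everywhere somewhere dense nature of $\Uc$ and the formula-enumeration bookkeeping built into the construction of $M$; these properties are insensitive to which combinatorial pattern the tree realizes. For the Kim-dividing claim: a Morley sequence $(e_i)_{i<\omega}$ generated by $q$ is forced by $Y \in \Uc$ and by the sets $\upsett{\sigma_n} \in \Uc$ to satisfy the EM-type of a right-comb inside $\{b_\sigma : \sigma \tgeq \sigma_n \frown 0,~n<\omega\}$, so by the dual SOP$_1$ hypothesis $\{\varphi(x,e_i) : i<\omega\}$ is $k$-inconsistent, and $\varphi(x,y)$ $q$-Kim-divides. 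Dually, for any Morley sequence $(e_i)_{i<\omega}$ generated by $p$, the formula $\exists x \bigwedge_{i<n}\varphi(x,y_i)$ lies in $p^{\otimes n}\res M$ for every $n$, because $p^{\otimes n}$ is finitely satisfiable in $n$-tuples drawn from the path $(b_{\sigma_n})$, and those tuples witness $\exists x \bigwedge_{i<n}\varphi(x,y_i)$ by consistency of the path; hence $\{\varphi(x,e_i) : i<\omega\}$ is consistent and $\varphi(x,y)$ does not $p$-Kim-divide.

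I do not expect a genuine obstacle here: the entire proof of \cref{prop:countable-characterization-CTP} is self-dual once one is handed a tree realizing the dual consistency pattern, which is exactly what Mutchnik's characterization of SOP$_1$ (as refined by Kim and Lee) provides. The only mild care needed is the small computation in the last step that finite satisfiability of $p$ in the path $\{b_{\sigma_n}\}$ transfers consistency of $\{\varphi(x,b_{\sigma_n})\}$ to consistency of $\{\varphi(x,e_i)\}$ along the Morley sequence, which is the formula-level dual of the standard observation that finite satisfiability transfers $k$-inconsistency (used in the original proof to conclude that $\varphi(x,y)$ $q$-Kim-divides in the CTP case).
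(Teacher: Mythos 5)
Your proof is correct and follows exactly the route the paper intends: Appendix~\ref{sec:Dual-NSOP1} states this proposition without proof, appealing to the fact that the proof of \cref{prop:countable-characterization-CTP} is insensitive to the path/right-comb duality, and you are simply carrying that dualization out explicitly. The construction of $M$, $(\sigma_n)$, $(X_n)$, $Y$, and the ultrafilter $\Uc$ depends only on the tree and the formula, not on which of paths or right-combs is consistent, so the first two claims really do transfer verbatim, and the ``$p$ does not Kim-divide'' step is the straightforward transfer of finite satisfiability along the path.

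One point worth tightening: the phrase ``satisfies the EM-type of a right-comb'' for the ``$\varphi(x,y)$ $q$-Kim-divides'' step is loose (the $b_\sigma$'s are not indiscernible), and the induction in the original Claim~3 does not literally transfer with the same shape. In \cref{prop:countable-characterization-CTP} the induction increments the Morley-sequence prefix $k$ while quantifying over right-combs $Z \subseteq Y$ of arbitrary size, using the base case ``right-combs are consistent.'' The dual you need is a \emph{descent} on comb size: for $j \leq k$, show that for every right-comb $Z\subseteq Y$ with $|Z| = k-j$, $\{\varphi(x,b_\tau):\tau\in Z\}\cup\{\varphi(x,e_i):i<j\}$ is inconsistent, with base case $j=0$ given by $k$-inconsistency of right-combs and the inductive step using $Y_n\in\Uc$ exactly as in the original (together with the fact that $\Uc$ is non-principal so that removing the finitely many $\eta\in Z$ from $Y_n$ keeps it in $\Uc$). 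Taking $j=k$, $Z=\varnothing$ gives $k$-inconsistency of $\{\varphi(x,e_i):i<k\}$. This is the precise content behind your EM-type remark and is indeed a routine dualization, but it is worth spelling out since the shape of the induction genuinely reverses.
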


We also no longer need the measurable cardinal in the following result, as the hard part has already been done for us in \cite{Kaplan2019}.

\begin{prop}[Dual of \cref{cor:CTP-char-dual}]
  Fix a complete theory $T$. The following are equivalent.
  \begin{enumerate}
  \item\label{dual-dual-1} $T$ has SOP$_1$.
  \item\label{dual-dual-2} There is a model $M \models T$ with $|M| = \kappa$, a type $p(x)$ over $M$, and a club $C$ of small elementary substructures of $M$ such that for any $N \in C$, there is a $\varphi(x,c) \in p(x)$ with $c \notin N$ such that for some $N$-bi-invariant $p(y) \supseteq \tp(c/N)$, $\varphi(x,c)$ $p$-Kim-divides.
  \item\label{dual-dual-3} The same as 2, but with each $p(y)$ an $N$-heir-coheir.
  \end{enumerate}
\end{prop}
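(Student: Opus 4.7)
The implication \ref{dual-dual-3} $\To$ \ref{dual-dual-2} is immediate, since every $N$-heir-coheir is in particular $N$-bi-invariant, as noted in the discussion preceding \cref{fact:heir-coheir-basic}.

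For \ref{dual-dual-1} $\To$ \ref{dual-dual-3}, I would dualize the argument of \cref{prop:char-uncountable-languages}(\ref{char-uncountable-2}). By \cite{Mutchnick-NSOP2} together with the observation of Kim and Lee \cite{Some-Remarks-Kim-dividing-NATP} that the equivalent dual combinatorial condition can be indexed by $2^{<\omega}$, SOP$_1$ is witnessed by a formula $\varphi(x,y)$ and a binary tree $(b_\sigma)_{\sigma \in 2^{<\omega}}$ with paths consistent and right-combs $k$-inconsistent. Fix a regular $\kappa > |\Lc|$ and run the same forcing-style construction as in the proof of \cref{prop:char-uncountable-languages} with the roles of ``consistent'' and ``$k$-inconsistent'' interchanged. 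This produces a model $M$ of size $\kappa$, a continuous chain $(N_\e)_{\e<\kappa}$ of small elementary substructures with $M = \bigcup_{\e<\kappa}N_\e$, and an element sequence $(e_\e)_{\e<\kappa}$ lying along a path of the tree with $e_\e \notin N_\e$. Since paths are consistent, $p(x) \coloneqq \{\varphi(x,e_\e) : \e < \kappa\}$ is a consistent type over $M$. At each stage the construction also delivers an $N_\e$-heir-coheir $s_\e \supseteq \tp(e_\e/N_\e)$; now $\varphi(x,e_\e)$ \emph{does} $s_\e$-Kim-divide (rather than fail to, as in the CTP case), because Morley sequences generated by $s_\e$ populate the right-comb portion of the tree, which is $k$-inconsistent. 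The club $C$ is then obtained from $\{N_\e : \e < \kappa\}$ by intersecting with a standard club on $\kappa$.

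For \ref{dual-dual-2} $\To$ \ref{dual-dual-1}, I would argue by contrapositive, exploiting the hint that ``the hard part has already been done'' in \cite{Kaplan2019}. Assume $T$ is NSOP$_1$ and that the data of \ref{dual-dual-2} is in place. Extend $p(x)$ to a global type $\tilde p(x)$. For each $N \in C$, some $\varphi(x,c) \in p \subseteq \tilde p$ $q$-Kim-divides along the Morley sequence of an $N$-invariant $q \supseteq \tp(c/N)$; by Kim's lemma in NSOP$_1$ theories (stated in the introduction of the excerpt), this means $\varphi(x,c)$ Kim-divides over $N$ in the usual sense, so $\tilde p$ Kim-divides over $N$. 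On the other hand, the refined Kaplan--Ramsey--Shelah result from \cite{Kaplan2019} (quoted in the discussion around \cref{quest:gclc}) says that the collection $D$ of $|T|$-sized elementary substructures of $\Ob$ over which $\tilde p$ does not Kim-divide is a club. Restricting $C$ to its $|T|$-sized members (which is still a club in $[M]^{|T|}$, since we chose $\kappa > |T|$ regular) and intersecting with $D$ yields a common model, contradicting the fact that $\tilde p$ both does and does not Kim-divide over it.

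The main technical nuisance I anticipate is the bookkeeping for the two different ambient spaces in which ``club'' is measured: $C$ is a club in the small substructures of $M$, while $D$ lives in $[\Ob]^{|T|}$. Picking $\kappa > |T|$ regular and passing to $|T|$-sized substructures on both sides should make the intersection argument go through cleanly; beyond this, the dualization of the forward direction is essentially cosmetic, and the only real content comes from citing the club version of stationary local character rather than the weaker stationary version.
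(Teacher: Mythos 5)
Your decomposition matches the paper's exactly: (1)$\To$(3) via the dual of Proposition~\ref{prop:char-uncountable-languages}(\ref{char-uncountable-2}), (3)$\To$(2) trivially, and (2)$\To$(1) via Kaplan--Ramsey--Shelah's local character theorem. The one soft spot is in (2)$\To$(1): you invoke the club version of their result and assert that restricting $C$ to its $|T|$-sized members gives a club in $[M]^{|T|}$, which is not automatic --- the members of $C$ could a priori all have size strictly between $|T|$ and $\kappa$, leaving the restriction empty. The paper instead cites Theorem~1.1 of \cite{Kaplan2019}, the stationary version of local character, which avoids this entirely: a stationary collection of substructures over which $\tilde p$ does not Kim-divide must already meet the given club $C$, yielding the contradiction without any passage to a smaller cardinality. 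Beyond that cosmetic point, the argument and the citations are the same.
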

\begin{proof}
  The fact that \ref{dual-dual-1} implies \ref{dual-dual-3} follows from the dual of \cref{prop:char-uncountable-languages} part \ref{char-uncountable-2}. The fact that \ref{dual-dual-3} implies \ref{dual-dual-2} is obvious. Finally, the fact that \ref{dual-dual-2} implies \ref{dual-dual-1} follows from \cite[Thm.~1.1]{Kaplan2019}.
\end{proof}


\section{Characterization of coheirs in terms of invariant extensions in expansions}
\label{sec:coheir-inv-ext-exp}

Here we will give the proof that $\neg$\ref{dcl-sat} implies $\neg$\ref{invariant-in-expansions} in \cref{prop:coheir-characterization}. Recall the following: $B \inda_A C$ means that $\acl(AB)\cap \acl(AC) = \acl(C)$. $\inda$ satisfies all of the axioms of a strict independence relation except for possibly base monotonicity (see \cite[Sec.~1]{AdlerGeoIntro}\footnote{Although note that there are some errors in this source. See  \cite{conant2021separation} for a full account of the relevant results with correct proofs.}). In particular, this means that $\inda$ satisfies full existence: For any $A$, $B$, and $C$, there is a $C' \equiv_A C$ such that $B \inda_A C'$. %

First we will need a lemma.

\begin{lem}\label{lem:generic-split}
  Let $A$ be a set of parameters satisfying $A = \acl(A)$. Let $P$, $U$, and $U^\ast$ be fresh unary predicate symbols. Let $\Lc_0 = \Lc \cup \{P\}$ and let $\Lc_1 = \Lc_0 \cup \{U,U^\ast\}$. For any $\Lc_1$-formula $\varphi(\xbar)$, let $\varphi^\ast(\xbar)$ be the result of replacing each instance of $U$ with $U^\ast$ and each instance of $U^\ast$ with $U$ in $\varphi(\xbar)$.
  
  Let $\kappa = |A|+|\Lc|$. For any $\kappa^+$-saturated, $\kappa^+$-homogeneous\footnote{By $\kappa^+$-homogeneous we mean that for any tuples $\bbar$ and $\cbar$ realizing the same type with $|\bbar|=|\cbar|\leq \kappa$, there is an automorphism taking $\bbar$ to $\cbar$.} model $M \supseteq A$, there is an $\Lc_1$-structure $(N,P^N,U^N,U_\ast^N)$ such that
  \begin{itemize}
  \item $(N,P^N) \succeq_{\Lc_0} (M,A)$,
  \item $\{P^N,U^N,U_\ast^N\}$ forms a partition of $N$, and
  \item for any $\Lc_1(A)$-formula $\varphi(\xbar)$, $N \models (\forall \xbar \in P)(\varphi(\xbar) \toot \varphi^\ast(\xbar))$.
  \end{itemize}
\end{lem}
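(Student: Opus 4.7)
The plan is to prove the lemma by a compactness argument. Observe that a structure $(N, P^N, U^N, U^{\ast N})$ with the required properties is precisely a model of the $\Lc_1$-theory
\[
T^+ := \ElDiag_{\Lc_0}(M, A) \;\cup\; \Sigma_{\mathrm{part}} \;\cup\; \bigl\{\forall \xbar\bigl(\textstyle\bigwedge_i P(x_i) \to (\varphi(\xbar) \leftrightarrow \varphi^\ast(\xbar))\bigr) : \varphi \in \Lc_1(A)\bigr\},
\]
where $\Sigma_{\mathrm{part}}$ axiomatizes that $\{P,U,U^\ast\}$ partitions the universe. So the task reduces to showing $T^+$ is consistent. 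A finite fragment $T^+_0 \subseteq T^+$ mentions only finitely many $A$-constants and only finitely many symmetry axioms, say for $\Lc_1(A)$-formulas $\varphi_1,\dots,\varphi_n$.

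I would construct a model of $T^+_0$ by taking $(M, A)$ itself as the $\Lc_0$-reduct and then choosing an appropriate partition $M \setminus A = U^M \sqcup U^{\ast M}$ so that $(M, A, U^M, U^{\ast M}) \models \varphi_i(\xbar) \leftrightarrow \varphi_i^\ast(\xbar)$ for each $i \leq n$ and each tuple $\xbar \in A^{|\xbar|}$. The partition would be produced by a back-and-forth construction exploiting $\kappa^+$-saturation and $\kappa^+$-homogeneity of $M$ together with the hypothesis $\acl(A) = A$. That hypothesis ensures every element of $M\setminus A$ has a non-algebraic $\Lc_0$-type over $A$, and hence by $\kappa^+$-saturation each such type has at least $\kappa^+$ realizations in $M$. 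At each stage, using this abundance and $\kappa^+$-homogeneity, I would assign the next element of $M \setminus A$ to either $U^M$ or $U^{\ast M}$ so as to preserve, for the finitely many formulas $\varphi_i$, the invariance under $U \leftrightarrow U^\ast$ of the partial information already committed on tuples from $A$; whenever a commitment in one half forces a matching realization in the other, saturation supplies an unassigned witness of the correct type over $A$ together with the committed tuples.

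The main obstacle is verifying that this back-and-forth can be extended coherently for all finitely many symmetry requirements simultaneously, rather than leading to a conflict at some stage. This requires careful bookkeeping of partial $\Lc_1(A)$-types of finite tuples and repeated appeals to $\kappa^+$-saturation and $\kappa^+$-homogeneity to produce compatible witnesses at each extension step. The assumption $\acl(A) = A$ is essential: without it, some element of $M \setminus A$ could have an algebraic type over $A$ that is realized in only one of the two halves, blocking the swap. Finiteness of $T^+_0$ is also crucial, since it bounds the amount of symmetric information that must be maintained, ensuring the construction terminates without ever being forced into an asymmetric configuration; once $T^+$ is shown consistent, any model directly witnesses the lemma.
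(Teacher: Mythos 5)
Your reduction to the consistency of $T^+$ is correct, and the facts you invoke about $\acl(A) = A$ and $\kappa^+$-saturation are accurate: every element of $M \setminus A$ has a non-algebraic $\Lc_0$-type over $A$ with $\kappa^+$ many realizations. However, the core of the argument---the back-and-forth construction of the partition---is not carried out, and you flag it yourself as an unresolved obstacle. This is a genuine gap, not a deferred verification. The symmetry axioms $\varphi(\xbar) \toot \varphi^\ast(\xbar)$ involve arbitrary $\Lc_1(A)$-formulas with quantifiers, so whether they hold at a tuple $\abar \in A$ depends on the \emph{entire} partition $M \setminus A = U^M \sqcup U^{\ast M}$, not on any small initial segment of commitments. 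A back-and-forth would have to run transfinitely to exhaust $M \setminus A$, and there is no finite (or even $\kappa$-sized) invariant that captures ``the symmetric information already committed'' for quantified formulas; nothing prevents the construction from inadvertently fixing, say, a definable configuration that meets $U$ but not $U^\ast$ in a way no later stage can repair. The appeal to finiteness of $T^+_0$ bounding the relevant information does not help, because a single quantified formula among the $\varphi_i$ already imports dependence on the whole partition.

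The paper handles this with a forcing argument that supplies exactly the control your outline is missing. Conditions are disjoint pairs $(B,C)$ of subsets of $M\setminus A$ of size at most $\kappa$; a generic filter produces the partition. The truth lemma localizes the truth of $\varphi(\abar)$ to a small condition $(B,C)$, and then an explicit symmetry argument---using full existence for $\inda$ to find, below any $(B',C') \leq (B,C)$, a pair $B''C'' \equiv_A BC$ disjoint from $B'C'$, together with the $\Aut(M/A)$-invariance of the poset---shows that $(B,C)$ also forces $\varphi^\ast(\abar)$. Absoluteness transfers consistency of each finite fragment back to $V$. In short, genericity plus automorphism-invariance of the poset replaces the unachievable ``coherent back-and-forth'' bookkeeping; without some surrogate for that mechanism, your construction does not go through.
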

\begin{proof}
  Fix $A$ and $M$ as in the statement of the lemma. Note that since $M$ is $\kappa^+$\nobreakdash-\hspace{0pt}saturated as a model of $T$, we have that for any tuples $\bbar$ and $\cbar$ with $\tp_{\Lc}(\bbar/A) = \tp_{\Lc}(\cbar/A)$ and $|\bbar| = |\cbar|\leq \kappa$, $\bbar$ and $\cbar$ realize the same $\Lc\cup\{P\}$-type over $A$ as well (by a back-and-forth argument).

  Let $T_1$ be the theory consisting of the elementary diagram of $(M,P^M)$, an axiom asserting that $\{P,U,U^\ast \}$ is a partition of the universe, $\neg \exists x (P(x) \wedge \psi(x,e))$, and $(\forall \xbar \in P)(\varphi(\xbar) \toot \varphi^\ast(\xbar))$ for each $\Lc_1$-formula $\varphi$.

  Clearly we just need to show that $T_1$ is consistent. We will prove this by building an expansion of $M$ in a forcing extension that is a model of $T_1$. By absoluteness, this will imply that each finite subset of $T_1$ is consistent and so $T_1$ itself is consistent.

  Let $\Pb$ be a forcing poset whose conditions are pairs of the form $(B,C)$, where $B,C \subseteq M \setminus A$, $|B| \leq \kappa$, $|C|\leq \kappa$, and $B \cap C = \varnothing$. The ordering is given by extension, i.e., $(B',C') \leq (B,C)$ if and only if $B' \supseteq B$ and $C' \supseteq C$. Let $G$ be a generic filter for this poset and consider the forcing extension $V[G]$. Let $U^M = \bigcup\{B : (B,C) \in G\}$ and $U^M_\ast = \bigcup\{C: (B,C) \in G\}$. It is clear that $\{P^M,U^M,U_\ast^M\}$ forms a partition of $M$. Let $M_1$ be the $\Lc_1$-structure $(M,P^M,U^M,U_\ast^M)$. We just need to show that for each $\Lc_1$-sentence $\varphi$, $M_1\models \varphi \toot \varphi^\ast$. Fix an $\Lc_1$-formula $\varphi(\xbar)$  and a tuple $\abar \in A$ and suppose that $M_1 \models \varphi(\abar)$. By the truth lemma, there is a forcing condition $(B,C) \in G$ such that $(B,C) \Vdash \text{``}M_1 \models \varphi(\abar)\text{''}$. By full existence, we have that for any condition $(B',C') \leq (B,C)$, there is a $B''C''$ (in the monster in $V$) such that $BC \equiv_A B''C''$ and $B'C' \inda_A B''C''$ (i.e., $\acl(AB'C')\cap \acl(AB''C'') = \acl(A) = A$). In particular, $B'C' \cap B''C'' = \varnothing$ and so $B'C'' \cap C'B'' = \varnothing$. Since $M$ is $\kappa^+$\nobreakdash-\hspace{0pt}saturated in $V$, we may assume that $B''C'' \subseteq M$. Since the forcing poset is invariant under $\Aut(M/A)$, we have that $(C'',B'')\Vdash \text{``}M_1 \models \varphi^\ast(\abar)\text{''}$ and so $(B'C'',C'B'')$ forces this as well. Since we can do this for any $(B',C') \leq (B,C)$, we have that $(B,C)\Vdash \text{``}M_1 \models \varphi^\ast\text{''}$. Therefore $M_1\models \varphi(\abar) \toot \varphi^\ast(\abar)$. Since this is true for any $\abar \in P$, we have that $M_1 \models (\forall \xbar \in P)(\varphi(\xbar) \toot \varphi^\ast(\xbar))$. Since $\varphi(\xbar)$ was arbitrary, $M_1$ satisfies the required axiom schema.

    Finally, since $\psi(x,e)$ is not satisfied in $A = P^M$, we have that $M_1 \models \neg \exists x ( P(x) \wedge \psi(x,e))$.
\end{proof}

Now we can complete the proof of \cref{prop:coheir-characterization}.

\begin{proof}[Proof that $\neg$\ref{dcl-sat} implies $\neg$\ref{invariant-in-expansions} in \cref{prop:coheir-characterization}]
  Fix a set of parameters $A= \acl(A)$ and let $p(x)$ be an $A$-invariant type that is not finitely satisfiable in $A$. Fix an $\Lc$-formula $\psi(x,y)$ and a parameter $e$ such that $\psi(x,e) \in p(x)$ and $\psi(x,e)$ is not satisfiable in $A$. Let $\kappa = (|A|+|\Lc|)^+$. Let $M \supseteq A$ be a $\kappa^+$-saturated, $\kappa^+$-homogeneous model of $T$. By $\kappa^+$-saturation we may assume that the parameter $e$ is in $M$. Let $P$ be a new unary predicate. Expand $M$ with $P$ so that $P^M = A$.

  Let $N$ be the extension of $M$ guaranteed by \cref{lem:generic-split}. By a standard model-theoretic argument, we may assume (by passing to a sufficiently saturated and homogeneous elementary extension if necessary) that there is an automorphism $f \in \Aut(N/A)$ (of $N$ as an $\Lc$-structure) such that $f[U^N]= U_\ast^N$. (Note that $f$ does not necessary fix all of $P^N$.) Now let $E$ be the equivalence relation on $N$ with equivalence classes $\{P^N,U^N,U_\ast^N\}$. Consider the expansion $(N,E)$. Note that $f$ is still an automorphism of this structure. Note also that $(N,E)\models \neg \exists x ( x \mathrel{E} a \wedge \psi(x,e))$ for any $a \in A$. Let $q(x)$ be an $\Aut((N,E)/A)$-invariant extension of $p\res N$. Since $\psi(x,e) \in q(x)$, we must have that $q(x)$ concentrates on one of the $E$-equivalence classes other than the class containing $A$, but these classes are not fixed by $f$, so we have a contradiction. Therefore no such extension can exist.
\end{proof}

One thing to note about this proof is that in the expansion, the set $A$ is not algebraically closed in $T_1^{\mathrm{eq}}$. This raises a question.

\begin{quest}
  Which invariant types $p(x)$ satisfy the following property?
  \begin{itemize}
  \item[$ $] For any model $M \supseteq A$ and any expansion $M^\dagger$ of $M$, $p\res M$ has a completion in $S(M^\dagger)$ that is $\Autf(M^\dagger/A)$-invariant.
  \end{itemize}
\end{quest}

\section{An example of Kim-dividing only along non-extendibly invariant types}
\label{sec:Kim-dividing-extendibly}

Let $\Lc$ be a language with three sorts: $G$ (for graph), $O$ (for orders), and $P$ (for points). $G$ has a unary relation $U$ and a binary relation $R$. We have four unary functions, which we will think of as two unary functions with codomain $G^2$: $\langle f_O,g_O \rangle : O \to G^2$ and $\langle f_P,g_P \rangle : P \to G^2$. Given $x$ and $y$ in $G$, we'll write $O_{x,y}$ for the $\langle f_O,g_O \rangle$-preimage of $\langle x,y \rangle$ and we'll write $P_{x,y}$ for the $\langle f_P,g_P \rangle$-preimage of $\langle x,y \rangle$. Finally, we have a ternary relation $<$ on $O \times P^2$, which we'll write as a parameterized binary relation $y <_x z$ for $x \in O$.

Let $Q(x,y,z,w)$ be the formula that says $|\{x,y,z,w\}|=4$, $\{x,y,z,w\} \subseteq U$, and there is an $R$-edge from some element of $\{x,y\}$ to some element of $\{z,w\}$.

Let $T_0$ be the following universal theory: 
\begin{itemize}
\item $R$ is a triangle-free graph relation, 
\item $U(f_O(x))$, $U(g_O(x))$, $U(f_P(x))$, and $U(g_P(x))$ always hold,
\item if $Q(x,y,z,w)$, then for any $\ell \in O_{x,y}$, $<_{\ell}$ is a linear order on $P_{z,w}$, and
\item for any $\ell \in O_{x,y}$, if $u <_\ell v$ for some $u$ and $v$, then $\langle f_P(u),g_P(u) \rangle = \langle f_P(v),g_P(v) \rangle$.
\end{itemize}
It is not hard but also not entirely pleasant to establish that the finite models of $T_0$ form a \Fraisse\ class with free amalgamation. Let $T$ be the theory of its \Fraisse\ limit. Note in particular that $T$ has quantifier elimination.


Fix a model $M$ of $T$. Fix $m \in \neg U(M)$ and $b$ and $c$ outside of $M$ such that $\neg (b \mathrel{R} c)$, the only edge between $M$ and $\{b,c\}$ is $c \mathrel{R} m$, and $U(b)$ and $U(c)$ hold.

\begin{lem}
  The formula $x \mathrel{R}b \wedge x \mathrel{R}c$ Kim-divides over $M$.
\end{lem}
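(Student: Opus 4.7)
The plan is to exhibit a specific $M$-invariant global type $p(y,z)$ extending $\tp(bc/M)$ whose associated Morley sequence witnesses $2$-inconsistency. I will define $p$ by specifying, for each parameter $a \in \Ob$: $y \mathrel{R} a \in p$ iff $\tp(a/M) = \tp(c/M)$; $z \mathrel{R} a \in p$ iff $a = m$; and then make no other new identifications with the $O$ and $P$ sorts ($f_O(o), g_O(o), f_P(o), g_P(o) \neq y, z$ for every $o \in \Ob$, and no novel relations among the orderings $<_\ell$). By quantifier elimination this uniquely determines a complete global type extending $\tp(bc/M)$, and since every clause depends only on $\tp(a/M)$, the type $p$ is automatically $M$-invariant.

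The main obstacle is verifying that $p$ is consistent. The critical geometric fact is that $\tp(c/M)$ contains $y \mathrel{R} m$, so any two realizations $a, a'$ of $\tp(c/M)$ in $\Ob$ share the common $R$-neighbor $m$; triangle-freeness of $R$ then forces $\neg(a \mathrel{R} a')$. Thus the class of $\tp(c/M)$-realizations in $\Ob$ is $R$-independent, which is exactly what is needed to realize $y$ adjacent to all of them (in an elementary extension) without creating a triangle. The \Fraisse{}/free-amalgamation structure of $T$ handles the $O$- and $P$-sort obligations routinely, since $y, z$ live in $G$ and never appear in any $<_\ell$-formula via the $\Ob$-parameters.

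Granted consistency, I generate a Morley sequence $(b_i, c_i)_{i<\omega}$ of $p$ inside $\Ob$. For any $j < i$, we have $c_j \equiv_M c$, so the formula $y \mathrel{R} c_j$ lies in $p \res M b_{<i} c_{<i}$, whence $b_i \mathrel{R} c_j$. At the same time $\neg(b_i \mathrel{R} c_i)$ holds because $\neg(y \mathrel{R} z) \in \tp(bc/M) \subseteq p$, and all remaining potential edges among $\{b_i, c_i : i < \omega\}$ are ruled out by the parsimonious clauses of $p$ (since $\tp(b/M) \neq \tp(c/M)$ and the only $M$-neighbor of a $c$-type realization is $m$). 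The dividing conclusion is then immediate: any $x$ realizing $x \mathrel{R} b_0 \wedge x \mathrel{R} c_0 \wedge x \mathrel{R} b_1 \wedge x \mathrel{R} c_1$ would, together with the edge $b_1 \mathrel{R} c_0$, form the triangle $\{x, b_1, c_0\}$, contradicting triangle-freeness of $R$. So $\{x \mathrel{R} b_i \wedge x \mathrel{R} c_i : i < \omega\}$ is $2$-inconsistent along this Morley sequence, and the formula Kim-divides over $M$.
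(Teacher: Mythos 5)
Your proposal is correct and takes essentially the same route as the paper: both construct an explicit $M$-invariant global type $p(y,z)$ extending $\tp(bc/M)$ whose associated Morley sequence is $2$-inconsistent via the triangle $\{x,b_1,c_0\}$. The only cosmetic difference is the edge clause defining $p$ — you connect $y$ precisely to realizations of $\tp(c/M)$, whereas the paper connects $y$ to all $d\notin M$ with $d\mathrel{R}m$; both choices yield $b_1\mathrel{R}c_0$ and both are consistent by the same triangle-freeness argument.
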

\begin{proof}
  Let $p(y,z)$ be the global $M$-invariant type extending $\tp(bc/M)$ entailing for all $d$ in the monster,
   $y \mathrel{R}d$ if and only if $d \notin M$ and $d \mathrel{R} m$, and
   $z \mathrel{R} d$ if and only if $d = m$.
  By quantifier elimination, this entails a complete $M$-invariant.

  If we find $b'c' \models p \res M bc$, then we'll have $b' \mathrel{R}  c$, implying that $x \mathrel{R} b\wedge x \mathrel{R} c \wedge x \mathrel{R}b' \wedge x  \mathrel{R}c'$ is inconsistent.
\end{proof}

\begin{lem}\label{lem:lem-2}
  If $x \mathrel{R}b \wedge x \mathrel{R}c$ Kim-divides with respect to an $M$-invariant type $p(y,z) \supseteq \tp(bc/M)$, then $p(y,z) \vdash Q(y,z,b,c)$.
\end{lem}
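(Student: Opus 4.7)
The plan is to prove the contrapositive: assume $p(y,z) \not\vdash Q(y,z,b,c)$, so by completeness of $p$ we have $p \vdash \neg Q(y,z,b,c)$, and build a realization of $\{x \mathrel{R} b_i \wedge x \mathrel{R} c_i : i < \omega\}$ in the monster, where $(b_i c_i)_{i<\omega}$ is a Morley sequence generated by $p$ with $(b_0, c_0)=(b,c)$.

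The first step is to verify that the $b_i$'s and $c_i$'s are pairwise distinct and all lie in $U$. Membership in $U$ is immediate from $\tp(bc/M)$ and $M$-invariance. For distinctness, observe that for any $d \notin M$, $p \vdash y \neq d$ and $p \vdash z \neq d$: otherwise $y = d \in p$ would force $y = \sigma(d) \in p$ for every $\sigma \in \Aut(\Ob/M)$ by $M$-invariance, contradicting that $d$ has many $M$-conjugates. Since $b_i, c_i \notin M$ for all $i$ (they realize $\tp(bc/M)$ and $b, c \notin M$) and $b_{j+1} c_{j+1} \models p \res M b_{\leq j} c_{\leq j}$, the element $b_{j+1}$ differs from each earlier $b_i, c_i$, and similarly for $c_{j+1}$; together with $y \neq z \in p$ (from $b \neq c$), we get pairwise distinctness.

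Second, $M$-invariance of $p$ applied to $\neg Q(y,z,b,c)$ gives $p \vdash \neg Q(y, z, b_i, c_i)$ for every $i$, since $(b_i, c_i) \equiv_M (b, c)$. Hence $\neg Q(b_j, c_j, b_i, c_i)$ for $i < j$, and by symmetry of $Q$ in its two pair-arguments, for all $i \neq j$. Combined with distinctness and $U$-membership of the four elements, this forces the absence of any $R$-edge between $\{b_i, c_i\}$ and $\{b_j, c_j\}$ whenever $i \neq j$; in addition, $\neg(b_i \mathrel{R} c_i)$ holds for every $i$ by invariance from $\neg(b \mathrel{R} c)$.

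Finally, since $T$ is the model companion of a \Fraisse\ class with free amalgamation and has quantifier elimination, the partial diagram extending the monster by a fresh vertex $x \in U$ with $x \mathrel{R} b_i$ and $x \mathrel{R} c_i$ for all $i$ and no other incidences is consistent with the universal theory: the only potential obstruction, triangle-freeness through $x$, is killed by the non-edges established above, and the axioms about $O$, $P$, and $<_\ell$ remain vacuous at $x$ since no new $O$- or $P$-elements are introduced, so $O_{x, b_i}$ stays empty for every $i$. Saturation then produces such an $x$ in $\Ob$, realizing $\{x \mathrel{R} b_i \wedge x \mathrel{R} c_i : i < \omega\}$ and contradicting the hypothesis that $x \mathrel{R} b \wedge x \mathrel{R} c$ Kim-divides with respect to $p$. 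The main obstacle is this last amalgamation step: one must carefully check that the rather idiosyncratic universal axioms governing the interaction of $G$ with $O$, $P$, and $<_\ell$ are either irrelevant or vacuously satisfied at the new vertex, so that triangle-freeness of $R$ is genuinely the only constraint to verify.
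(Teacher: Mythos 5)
Your proof is correct and follows essentially the same approach as the paper's: reduce to showing that failure of $Q$ between Morley-sequence pairs yields no $R$-edges at all among $\{b_i, c_i : i<\omega\}$, whence $\{x \mathrel{R} b_i \wedge x \mathrel{R} c_i\}$ is consistent by a free-amalgamation argument, contradicting Kim-dividing. You simply spell out in more detail the distinctness of the $b_i, c_i$, the symmetry of $Q$ in its pair arguments, and the verification that the added vertex violates none of the universal axioms, all of which the paper treats as routine.
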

\begin{proof}
  For any $M$-invariant type $p(y,z) \supseteq \tp(bc/M)$, $p(y,z) \vdash |\{y,z,b,c\}| = 4$. Furthermore, we necessarily have that $p(y,z) \vdash U(y)\wedge U(z)$, so the only thing to check is that if $b'c' \models p \res M bc$, then there is an $R$-edge between some element of $\{b,c\}$ and some element of $\{b',c'\}$.

  Suppose that this doesn't happen. Then if $(b_ic_i)_{i<\omega}$ is a Morley sequence generated by $p(y,z)$, we'll have that there are no $R$-edges between any pair of elements of $\{b_i,c_i:i<\omega\}$, implying that $\{x \mathrel{R} b_i\wedge x \mathrel{R} c_i : i< \omega\}$ is consistent, contradicting the fact that this formula Kim-divides along Morley sequences generated by $p(y,z)$. Therefore $p(y,z) \vdash Q(y,z,b,c)$.
\end{proof}

Let $\Ob$ be the monster model of $T$.

\begin{lem}
  $P_{b,c}(\Ob)$ is an $M$-indiscernible set.
\end{lem}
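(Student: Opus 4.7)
The plan is to use quantifier elimination for $T$ together with the free amalgamation property. It suffices to show that for any $n<\omega$ and any two $n$-tuples $\bar u = (u_1,\dots,u_n)$ and $\bar u' = (u'_1,\dots,u'_n)$ of distinct elements of $P_{b,c}(\Ob)$, we have $\tp(\bar u/M) = \tp(\bar u'/M)$. By quantifier elimination this reduces to showing that the substructures generated by $M \cup \bar u$ and $M \cup \bar u'$ are isomorphic over $M$ via the map $u_i \mapsto u'_i$.

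Since the only function symbols are $f_O,g_O,f_P,g_P$ and the only applications to the $u_i$'s give $f_P(u_i)=b$ and $g_P(u_i)=c$, the substructure generated by $M \cup \bar u$ is $M \cup \{b,c\} \cup \{u_1,\dots,u_n\}$. All atomic relations within $M$ and between $M$ and $\{b,c\}$ are already fixed by hypothesis, and $u_i \ne u_j$ for $i \ne j$ as well as $u_i \notin M \cup \{b,c\}$. The only atomic information that could possibly distinguish permutations of the $u_i$'s is $<_\ell$-relations among them for $\ell$ in the substructure (hence $\ell \in M$). By the axiom that $u <_\ell v$ implies $\langle f_P(u), g_P(u)\rangle = \langle f_P(v),g_P(v) \rangle$, such a relation on $P_{b,c}$ elements requires $\ell \in O_{x,y}$ with $Q(x,y,b,c)$ holding.

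The key observation is that $Q(x,y,b,c)$ fails for every pair $x,y \in M$: the formula $Q$ requires $x,y,b,c \in U$ all distinct and an $R$-edge from $\{x,y\}$ to $\{b,c\}$, but the only $R$-edge between $M$ and $\{b,c\}$ is $c \mathrel R m$, and $m \notin U$. Hence no axiom of $T$ forces any $<_\ell$-relations on $P_{b,c}$ for any $\ell$ built from parameters in $M$. By free amalgamation in the \Fraisse\ class defining $T$, there exists a tuple of $n$ distinct elements of $P_{b,c}(\Ob)$ whose substructure over $M \cup \{b,c\}$ has no $<_\ell$-relations among the new elements; the resulting ``free'' configuration is the Fra\"{\i}ss\'e generic extension. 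Because $T$ is the theory of the \Fraisse\ limit and has quantifier elimination, every tuple of $n$ distinct elements of $P_{b,c}(\Ob)$ realizes this same type over $M$. In particular, any permutation of $(u_1,\dots,u_n)$ is an automorphism of the generated substructure fixing $M$ pointwise, so lifts to an automorphism of $\Ob$ fixing $M$. The only real thing to check is that the $Q$-obstruction above is the unique source of potentially forced relations on $P_{b,c}$ over $M$; granted that, indiscernibility is immediate.
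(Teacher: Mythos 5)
Your key observation is the same as the paper's: $Q(x,y,b,c)$ fails for every pair $x,y \in M$ because the only $R$-edge between $M$ and $\{b,c\}$ is $c \mathrel R m$ and $m \notin U$. However, the step from there to indiscernibility has a genuine gap, and the detour through Fr\"aiss\'e genericity does not close it.

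The problem is the direction of inference. You conclude that ``no axiom of $T$ \emph{forces} any $<_\ell$-relations on $P_{b,c}$,'' and then argue that the ``free'' configuration with no such relations is ``the'' generic extension, hence every tuple of distinct elements of $P_{b,c}(\Ob)$ has that type. This is backwards: in a free amalgamation class the Fra\"iss\'e limit realizes \emph{every} finite configuration the class permits, not just the free one. (Compare: the free amalgam of two triangle-free graphs has no new edges, yet the Henson graph is far from edgeless.) So ``not forced'' does not give ``absent.'' What you actually need—and what the paper simply asserts—is that $<_\ell$ is \emph{trivial} (empty) on $P_{b,c}(\Ob)$ for every $\ell \in O(M)$; that is, no $<_\ell$-relation is even \emph{permitted} there. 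That stronger fact comes from reading the third axiom as tying non-triviality of $<_\ell$ on $P_{z,w}$ to $Q(x,y,z,w)$, not from the fourth axiom (which you cite but which only constrains $<_\ell$ to respect $P$-fibers, a condition automatically satisfied by $u,v \in P_{b,c}$). Once you have triviality, quantifier elimination gives indiscernibility directly, and the Fr\"aiss\'e/free-amalgamation machinery is unnecessary. Your closing caveat correctly senses a gap, but mislocates it: the issue is not whether $Q$ is the unique source of \emph{forced} relations, but whether any relations are \emph{allowed} at all.
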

\begin{proof}
  First note that $Q(y,z,b,c)$ does not hold for any $\{y,z\} \subseteq M$. Therefore for any $\ell \in O(M)$, $<_\ell$ is trivial on $P_{b,c}(\Ob)$. By quantifier elimination, this implies that any two $n$-tuples of distinct elements of $P_{b,c}(\Ob)$ have the same type over $M$, so $P_{b,c}(\Ob)$ is an $M$-indiscernible set.
\end{proof}

Let $d$ be an element of $O_{b,c}(\Ob)$.

\begin{lem}
  If $p(y,z,w) \supseteq \tp(bcd/M)$ is an $M$-invariant type, then $p(y,z,w) \vdash \neg Q(y,z,b,c)$.
\end{lem}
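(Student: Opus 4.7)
The plan is to argue by contradiction. Suppose $p(y,z,w) \vdash Q(y,z,b,c)$. Since $p$ extends $\tp(bcd/M)$ and $d \in O_{b,c}$, we have $p(y,z,w) \vdash f_O(w) = y \wedge g_O(w) = z$, i.e., $w \in O_{y,z}$. Combining this with $p \vdash Q(y,z,b,c)$ and the third axiom of the universal theory (applied with the roles of $x,y$ played by $y,z$ and of $z,w$ played by $b,c$), we conclude that $p(y,z,w)$ entails that $<_w$ is a linear order on $P_{b,c}$.

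By the previous lemma, $P_{b,c}(\Ob)$ is an $M$-indiscernible set, and by the \Fraisse\ property together with saturation of $\Ob$, it is infinite. Pick two distinct elements $u, v \in P_{b,c}(\Ob)$. By $M$-indiscernibility, $\tp(u,v/M) = \tp(v,u/M)$, so by homogeneity of the monster there is $\sigma \in \Aut(\Ob/M)$ with $\sigma(u) = v$ and $\sigma(v) = u$. Applying $M$-invariance of $p$ to the $\Lc$-formula $\theta(y,z,w,s_1,s_2) \coloneqq s_1 <_w s_2$ with parameters $u, v$: the formula $u <_w v$ lies in $p$ iff $\sigma(u) <_w \sigma(v)$, i.e., $v <_w u$, lies in $p$. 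But $p$ already forces $<_w$ to be a strict linear order on $P_{b,c}$, so since $u \neq v$, exactly one of the two formulas is in $p$---contradiction.

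The main work in this plan is bookkeeping: checking that the axiom applies with the correct variable assignments (noting $Q$ is symmetric between its pairs and that $w \in O_{y,z}$ is forced by the type of $d$) and that $P_{b,c}$ genuinely has two distinct elements (which follows from free amalgamation in the \Fraisse\ class and saturation of $\Ob$). Neither step should present a real obstacle; the essential point is the clash between $M$-indiscernibility of $P_{b,c}$ (which makes $u$ and $v$ indistinguishable to any $M$-invariant type) and the strict asymmetry of a linear order on a two-element set.
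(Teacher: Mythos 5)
Your proof is correct and takes essentially the same approach as the paper: both assume $p \vdash Q(y,z,b,c)$, observe that $p$ then entails $<_w$ linearly orders $P_{b,c}(\Ob)$, and derive a contradiction with $M$-indiscernibility of $P_{b,c}(\Ob)$. You merely unpack the paper's terse final step (``there are no $M$-invariant linear orders on an $M$-indiscernible set'') via an explicit automorphism swapping two elements.
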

\begin{proof}
  Assume for the sake of contradiction that $p(y,z,w)\vdash Q(y,z,b,c)$. Define a relation $<^p$ on $P(\Ob)$ by $e <^p f$ if and only if $p(y,z,w) \vdash e <_w f$. Note that $<^p$ is clearly $M$-invariant. Since $p(y,z,w)\vdash Q(y,z,b,c)$, the restriction of $<^p$ to $P_{b,c}(\Ob)$ needs to be a linear order, but there are no $M$-invariant linear orders on $P_{b,c}(\Ob)$, since it is an $M$-indiscernible set.
\end{proof}

\begin{prop}\label{prop:counterexample-Kim-dividing}
  If $x \mathrel{R} b \wedge x \mathrel{R}c$ Kim-divides along an $M$-invariant type $p(y,z) \supseteq \tp(bc/M)$, then $p(y,z)\cup \tp(bcd/M)$ has no $M$-invariant completions.
\end{prop}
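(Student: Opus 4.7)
The plan is to combine the two previous lemmas directly, observing that they give contradictory conclusions about $Q(y,z,b,c)$.

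First, assume the hypothesis: $p(y,z)\supseteq \tp(bc/M)$ is a global $M$-invariant type along which $x\mathrel{R}b\wedge x\mathrel{R}c$ Kim-divides. By the first of the two preceding lemmas (the one stating that such a $p$ must force $Q(y,z,b,c)$), we have $p(y,z)\vdash Q(y,z,b,c)$.

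Next, suppose for contradiction that $p(y,z)\cup \tp(bcd/M)$ does admit an $M$-invariant global completion $r(y,z,w)$. Then $r(y,z,w)$ is an $M$-invariant global type extending $\tp(bcd/M)$, so the second of the two preceding lemmas applies and yields $r(y,z,w)\vdash \neg Q(y,z,b,c)$. On the other hand, $r\supseteq p$, so $r\vdash Q(y,z,b,c)$, giving the desired contradiction.

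The only thing worth double-checking is the bookkeeping of variables: $\tp(bcd/M)$ is viewed as a type in $(y,z,w)$ with $b\leftrightarrow y$, $c\leftrightarrow z$, $d\leftrightarrow w$, which is consistent with how $p(y,z)$ and the preceding lemma about $\neg Q(y,z,b,c)$ were stated, so the two forcings genuinely contradict one another. There is no real obstacle here — the content of the result lies entirely in the two prior lemmas, and this proposition is just the formal clash between them.
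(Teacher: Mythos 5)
Your argument is correct and is exactly the intended one: the paper states this proposition without a written proof, treating it as the immediate clash between \cref{lem:lem-2} (any $M$-invariant $p(y,z) \supseteq \tp(bc/M)$ along which the formula Kim-divides forces $Q(y,z,b,c)$) and the subsequent lemma (any $M$-invariant completion of $\tp(bcd/M)$ forces $\neg Q(y,z,b,c)$). Your variable-bookkeeping check is also the right thing to verify, and it goes through.
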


One thing to note about this example is that it does in fact have ATP (since it interprets the Henson graph). It follows from \cite[Thm.~3.10]{Some-Remarks-Kim-dividing-NATP} that \cref{prop:counterexample-Kim-dividing} cannot occur in an NATP theory.




\bibliographystyle{plain}
\bibliography{../ref}

\begin{thebibliography}{10}

\bibitem{AdlerGeoIntro}
Hans Adler.
\newblock A geometric introduction to forking and thorn-forking.
\newblock {\em Journal of Mathematical Logic}, 09(01):1--20, June 2009.

\bibitem{ATP-1}
JinHoo Ahn and Joonhee Kim.
\newblock {SOP}$_1$, {SOP}$_2$, and antichain tree property.
\newblock {\em Annals of Pure and Applied Logic}, 175(3):103402, March 2024.

\bibitem{Ahn2022}
JinHoo Ahn, Joonhee Kim, and Junguk Lee.
\newblock On the antichain tree property.
\newblock {\em Journal of Mathematical Logic}, 23(02), December 2022.

\bibitem{Chernikov2014}
Artem Chernikov.
\newblock Theories without the tree property of the second kind.
\newblock {\em Annals of Pure and Applied Logic}, 165(2):695--723, February
  2014.

\bibitem{Chernikov-Kaplan-NTP2}
Artem Chernikov and Itay Kaplan.
\newblock Forking and dividing in {NTP$_2$} theories.
\newblock {\em The Journal of Symbolic Logic}, 77(1):1--20, 2012.

\bibitem{conant2021separation}
Gabriel Conant and James Hanson.
\newblock Separation for isometric group actions and hyperimaginary
  independence.
\newblock {\em Fundamenta Mathematicae}, 259(1):97--109, 2022.

\bibitem{KaplanRamseyOnKim}
Itay Kaplan and Nicholas Ramsey.
\newblock On {K}im-independence.
\newblock {\em Journal of the European Mathematical Society}, 22(5):1423--1474,
  January 2020.

\bibitem{Kaplan2019}
Itay Kaplan, Nicholas Ramsey, and Saharon Shelah.
\newblock Local character of {K}im-independence.
\newblock {\em Proceedings of the American Mathematical Society},
  147(4):1719--1732, January 2019.

\bibitem{Some-Remarks-Kim-dividing-NATP}
Joonhee {Kim} and Hyoyoon {Lee}.
\newblock {Some Remarks on Kim-dividing in NATP Theories}.
\newblock {\em arXiv e-prints}, page arXiv:2211.04213, November 2022.

\bibitem{NKL}
Alex Kruckman and Nicholas Ramsey.
\newblock A {N}ew {K}im’s {L}emma.
\newblock {\em Model Theory}, 3(3):825–860, Aug 2024.

\bibitem{Mutchnik-NSOP2}
Scott {Mutchnik}.
\newblock {On NSOP$_2$ Theories}.
\newblock {\em arXiv e-prints}, page arXiv:2206.08512, June 2022.

\end{thebibliography}

\end{document}